\documentclass[11pt,twoside,reqno]{amsart}
\usepackage[utf8]{inputenc}
\usepackage[T1]{fontenc}
\usepackage{import}
\usepackage{newclude}
\usepackage{anysize}
\usepackage{amsmath}
\usepackage{amsthm}
\usepackage{amsmath,amscd}
\usepackage[utf8]{inputenc}
\usepackage{amssymb}
\usepackage{scalerel}
\usepackage{stmaryrd}
\usepackage{wasysym}
\usepackage{mathrsfs}
\usepackage[pagebackref=true,hypertexnames=false]{hyperref}
\renewcommand*{\backref}[1]{}
\renewcommand*{\backrefalt}[4]{({\tiny%
   \ifcase #1 Not cited.%
         \or Cited on page~#2.%
         \else Cited on pages #2.%
   \fi%
   })}
\usepackage{cleveref}
\usepackage{enumitem}
\usepackage{dsfont}
\usepackage{soul}
\hypersetup{colorlinks=true,linkcolor=black,citecolor=black}
\usepackage{color}
\usepackage[all]{xy}
\usepackage{float}
\usepackage{bm}
\usepackage{mathtools}
\usepackage{makecell}
\usepackage{thmtools}
\usepackage{setspace}
\usepackage{comment}
\usepackage{tikz}
\usepackage{tikz-cd}
\usepackage{mathdots}
\usepackage{graphicx}
\usepackage[myheadings]{fullpage}
\usepackage{csquotes}

\DeclareFontFamily{U}{mathb}{}

\DeclareFontShape{U}{mathb}{m}{n}{
   <-5.5>  mathb5
   <5.5-6.5> mathb6
   <6.5-7.5> mathb7
   <7.5-8.5> mathb8
   <8.5-9.5> mathb9
   <9.5-11>  mathb10
   <11->     mathb12
}{}

\DeclareSymbolFont{mathb}{U}{mathb}{m}{n}

\DeclareMathSymbol{\lefttorightarrow}{3}{mathb}{"FC}
\DeclareMathSymbol{\righttoleftarrow}{3}{mathb}{"FD}

\makeatletter
\newcommand{\acts}{%
  \mathrel{\mathpalette\@acts\righttoleftarrow}}
\newcommand{\actedby}{%
  \mathrel{\mathpalette\@acts\lefttorightarrow}}
\newcommand{\@acts}[2]{\reflectbox{$\m@th#1#2$}}
\makeatother

\numberwithin{equation}{section}
\newcommand\mtop{.95in}
\newcommand\mbottom{.95in}
\newcommand\mleft{1in}
\newcommand\mright{1in}
\usepackage[top = \mtop, bottom = \mbottom, left = \mleft, right=\mright]{geometry}

\DeclareMathOperator{\Mat}{Mat}

\newtheorem{thm}{Theorem}[section]

\newtheorem{corollary}[thm]{Corollary}

\newtheorem{prop}[thm]{Proposition}

\newtheorem{lemma}[thm]{Lemma}

\theoremstyle{definition}
\newtheorem{defi}[thm]{Definition}

\newcommand\reallywidehat[1]{%
\savestack{\tmpbox}{\stretchto{%
  \scaleto{%
    \scalerel*[\widthof{\ensuremath{#1}}]{\kern-.6pt\bigwedge\kern-.6pt}%
    {\rule[-\textheight/2]{1ex}{\textheight}}
  }{\textheight}%
}{0.5ex}}%
\stackon[1pt]{#1}{\tmpbox}%
}
\DeclareSymbolFont{bbold}{U}{bbold}{m}{n}
\DeclareSymbolFontAlphabet{\mathbbold}{bbold}

\makeatletter
\def\@tocline#1#2#3#4#5#6#7{\relax
  \ifnum #1>\c@tocdepth 
  \else
    \par \addpenalty\@secpenalty\addvspace{#2}%
    \begingroup \hyphenpenalty\@M
    \@ifempty{#4}{%
      \@tempdima\csname r@tocindent\number#1\endcsname\relax
    }{%
      \@tempdima#4\relax
    }%
    \parindent\z@ \leftskip#3\relax \advance\leftskip\@tempdima\relax
    \rightskip\@pnumwidth plus4em \parfillskip-\@pnumwidth
    #5\leavevmode\hskip-\@tempdima
      \ifcase #1
       \or\or \hskip 1em \or \hskip 2em \else \hskip 3em \fi%
      #6\nobreak\relax
    \hfill\hbox to\@pnumwidth{\@tocpagenum{#7}}\par
    \nobreak
    \endgroup
  \fi}
\makeatother

\newcommand{\R}{\mathbb{R}}
\newcommand{\Z}{\mathbb{Z}}
\newcommand{\Q}{\mathbb{Q}}

\newcommand{\C}{\mathbb{C}}
\newcommand{\F}{\mathbb{F}}

\newcommand{\E}{\mathbb{E}}

\newcommand{\mc}{\mathcal}

\renewcommand{\L}{\Lambda}

\DeclareMathOperator{\new}{new}
\DeclareMathOperator{\Disc}{Disc}

\DeclareMathOperator{\Mod}{Mod}

\DeclareMathOperator{\poly}{poly}

\DeclareMathOperator{\GL}{GL}

\DeclareMathOperator{\un}{un}
\DeclareMathOperator{\ab}{ab}
\DeclareMathOperator{\tame}{tame}

\title[Eigenvalue Distribution of $p$-adic Random Matrices Among Algebraic Extensions]{Eigenvalue Distribution of $p$-adic Random Matrices Among Algebraic Extensions, with an Analogue for $p$-adic Random Polynomials}
\author{Jiahe Shen}

\date{\today}

\begin{document}

\thanks{The author thanks Mehtaab Sawhney for asking the question, and Roger Van Peski for reading the draft and providing helpful comments. The author acknowledges support from Ivan Corwin's NSF grant DMS-2246576 and Simons Investigator grant 929852.}

\maketitle

\begin{abstract}
We study the distribution of eigenvalues of Haar-random matrices over $\Z_p$
among algebraic extensions of $\Q_p$. Our results give $p$-adic analogues of
the real-eigenvalue counting results of Edelman-Kostlan-Shub \cite{edelman1994many} for the real Ginibre ensemble, but with a different degree behavior: while real eigenvalues form only a vanishing proportion in the real Ginibre ensemble, $p$-adic eigenvalues are asymptotically evenly distributed among possible extension degrees. We also show that the maximal unramified extension $\Q_p^{\un}$ captures all but a bounded expected number of eigenvalues, and that the expected number of eigenvalues outside $\Q_p^{\un}$ has a finite positive limit with an explicit upper bound.

The proof uses correlation function formulas from the author's previous joint
work with Van Peski \cite{shen2026eigenvalues}, together with uniform estimates over varying finite extensions. We also prove analogous results for roots of random Haar polynomials over $\Z_p$, using the correlation function formulas of Caruso \cite{caruso2022zeroes}. These polynomial results are $p$-adic analogues of the real-root counting results of Edelman-Kostlan \cite{edelman1995many}, again with behavior
different from the real setting.
\end{abstract}

\textbf{Keywords: }\keywords{$p$-adic random polynomial, $p$-adic random matrix, correlation function}

\textbf{Mathematics Subject Classification (2020): }\subjclass{60B20 (primary); 11S15, 11S05, 15B52 (secondary).}

\tableofcontents

\section{Introduction}\label{sec: Intro}

\subsection{Main results}

The goal of the present paper is to study how the eigenvalues of a Haar-random
matrix over $\Z_p$ are distributed among algebraic extensions of $\Q_p$ as the
matrix size tends to infinity.  This question is a $p$-adic analogue of a
classical phenomenon in real random matrix theory.  For example, in the real
Ginibre ensemble, the entries are independent real Gaussian random variables,
and one may ask how many eigenvalues are real rather than genuinely complex.
If $E_n$ denotes the expected number of real eigenvalues of an $n\times n$ real
Ginibre matrix, Edelman-Kostlan-Shub \cite{edelman1994many} proved that
\begin{equation}\label{eq: expected number of real eigenvalues}
E_n=\sqrt{\frac{2n}{\pi}}
\left(1-\frac{3}{8n}
+O\left(\frac{1}{n^2}\right)\right)
+\frac12.
\end{equation}
Thus only a vanishing proportion of the eigenvalues are real in expectation.

Motivated by this contrast between real and imaginary eigenvalues, this paper
studies the corresponding question for $p$-adic random matrices. Over the real numbers, the algebraic extension structure is very simple: the only proper finite extension of $\R$ is $\C$.  Thus, for real random matrices, the basic algebraic distinction is whether an eigenvalue lies in $\R$ or in $\C\setminus \R$.  The situation over $\Q_p$ is much richer.  The field $\Q_p$ has infinitely many finite extensions, and these extensions are organized by arithmetic invariants
such as their degree, ramification index, inertia degree, and discriminant.
Therefore, for a Haar-random matrix $A\in\Mat_n(\Z_p)$, one can ask not only how many eigenvalues lie in $\Q_p$ itself, but also how the eigenvalues are distributed among the various algebraic extensions of $\Q_p$.

This question is also natural from the viewpoint of algebraic number theory.
Certain large algebraic extensions of $\Q_p$, such as the maximal unramified extension, play central roles in the arithmetic of local fields. These fields reflect structural features of non-archimedean local fields which have no
direct counterpart over $\R$ or $\C$. In this paper, we study the distribution of eigenvalues of $p$-adic random matrices among algebraic extensions from two perspectives: first by organizing
eigenvalues according to the degree of the extension they generate, and second by studying how many eigenvalues lie outside the maximal unramified extension
$\Q_p^{\un}$.

We now make this question precise. Throughout the paper, $p$ is a prime number, $\Q_p$ is the field of $p$-adic numbers, and $\bar\Q_p$ is the algebraic closure of $\Q_p$. Let $\Mat_n(\Z_p)$ be the space of $n\times n$ matrices with entries in $\Z_p$.

\begin{defi}\label{defi: counting eigenvalues}
Let $U \subset \bar\Q_p$ be a subset of $\bar\Q_p$, the algebraic closure of $\Q_p$. We denote by
$$Z_{U,n}=Z_{U,n}(A),\quad A\in\Mat_n(\Z_p)$$
the number of eigenvalues of $A$ that lie in $U$.
\end{defi}

For example, one may take $U=\Z_p$.  Then, for $A\in\Mat_n(\Z_p)$, the function
$Z_{\Z_p,n}=Z_{\Z_p,n}(A)$ counts the number of eigenvalues of $A$ which lie in $\Z_p$.  Throughout the paper, we regard
$\Mat_n(\Z_p)$ as a probability space equipped with the additive Haar probability measure, and we only consider subsets $U$ for which $Z_{U,n}$ is measurable and thus can be regarded as a random variable over $\Mat_n(\Z_p)$.

We will often want to count eigenvalues according to the finite extension of
$\Q_p$ that they generate. This is analogous to the archimedean setting in the following sense. If $A$ is a real matrix, then all its eigenvalues lie in $\C$, but one is usually interested in separating the eigenvalues which already lie in $\R$ from those which lie in
$\C\setminus \R$.  In the $p$-adic setting, all eigenvalues of a matrix in $\Mat_n(\Z_p)$ lie in $\overline{\Q}_p$, but there are many intermediate finite extensions of $\Q_p$. To avoid counting the same eigenvalue in several larger
extensions, we isolate the part of each extension consisting of elements which
do not come from a proper subextension.

Thus, given a finite extension $K/\Q_p$, we write $K^{\new}$ for the subset of $K$ consisting of elements which do not lie in any proper subextension of $K$.  The unit ball in $K$, equivalently $\overline{\Z}_p\cap K$, is the ring of integers $\mathcal O_K$. Moreover, every eigenvalue of a matrix $A\in\Mat_n(\Z_p)$
which lies in $K$ in fact lies in $\mathcal O_K$.  We therefore set
\[
\mathcal O_K^{\new}:=K^{\new}\cap \mathcal O_K.
\]
We first study the distribution of eigenvalues from the perspective of
extension degree. This viewpoint is closer in spirit to the archimedean question discussed above: instead of asking whether an eigenvalue is real, one asks how large the algebraic extension generated by an eigenvalue is. More precisely, for $1\le r\le n$, one may ask how many eigenvalues of a Haar-random matrix $A\in\Mat_n(\Z_p)$ generate an extension of $\Q_p$ of degree at most $r$.

Some special cases of this question are already understood. For example, \cite[Section 9]{shen2026eigenvalues} gives explicit formulas for the limiting
expected number of eigenvalues lying in $\Z_p$ and in quadratic extensions. In
particular, a surprising special case is that the expected number of eigenvalues
lying in $\Z_p$ is exactly equal to $1$ for every matrix size $n$. At the other extreme, every eigenvalue of an $n\times n$ matrix generates an extension of degree at most $n$, so the total number of such eigenvalues is trivially $n$. The natural question is what happens between these two extremes. For instance, how many eigenvalues generate extensions of degree at most $\sqrt n$, or at most $\alpha n$ for some fixed constant $\alpha\in(0,1)$? The following theorem answers this question for any cutoff $r_n$ which tends to infinity while remaining a nontrivial distance from $n$.

\begin{thm}\label{thm: zeros degree under rn}
Suppose the positive integer sequence $(r_n)_{n\ge 1}$ satisfies 
\begin{equation*}
\lim_{n\rightarrow\infty} r_n=+\infty,\quad
\lim_{n\rightarrow\infty}(n-r_n)=+\infty.
\end{equation*}
Then, we have
\begin{equation*}
\sum_{[K:\Q_p]\le r_n}\E[Z_{\mc{O}_K^{\new},n}]=r_n+\sum_{m=1}^\infty\left(1-(p^{-m};p^{-1})_\infty\right)+o(1).
\end{equation*}
Here we use the standard $q$-Pochhammer notation
$(p^{-m};p^{-1})_\infty:=\prod_{k=0}^{\infty}\left(1-p^{-m-k}\right)$.
\end{thm}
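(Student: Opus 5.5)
The plan is to reduce the statement to a per-degree computation, take the limit degree by degree, and then control the errors uniformly in the degree. For $1\le d\le n$ set
\[
E_{d,n}:=\sum_{[K:\Q_p]=d}\E[Z_{\mc O_K^{\new},n}].
\]
Here $K$ runs over subfields of $\bar\Q_p$. Each eigenvalue $\lambda$ of $A$ generates exactly one such field $\Q_p(\lambda)$, and $\lambda\in\mc O_{\Q_p(\lambda)}^{\new}$, so $E_{d,n}$ is the expected number of eigenvalues of degree exactly $d$. The sum in Theorem~\ref{thm: zeros degree under rn} is therefore $\sum_{d\le r_n}E_{d,n}$. Since $\sum_{d\le n}E_{d,n}=n$, we can also write it as $n-\sum_{r_n<d\le n}E_{d,n}$. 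I will use whichever form is more convenient at each stage.

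\textbf{Step 1: exact formula for $E_{d,n}$.} I would invoke the one-point correlation function from \cite{shen2026eigenvalues}. For a finite extension $K/\Q_p$ and $x\in\mc O_K^{\new}$, it expresses the density of eigenvalues at $x$ with respect to Haar measure on $\mc O_K$. The density depends on $x$ only through arithmetic data of $K$ and of $x$, such as the residue degree $f$, the ramification index $e$ and the discriminant. Integrating over $\mc O_K^{\new}$ and summing over all $K$ of degree $d$ gives a closed formula for $E_{d,n}$ as a finite sum over pairs $(e,f)$ with $ef=d$.

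\textbf{Step 2: $n\to\infty$ limit for fixed $d$.} The ramified extensions ($e>1$) have a Serre-type mass formula, $\sum_K q^{-v(\Disc)}\cdot(\text{stuff})$. Using it, I expect the ramified part to contribute an amount that decays geometrically in $d$. The unramified part should behave like the count of degree-$d$ irreducible factors of the characteristic polynomial of a random matrix over $\F_p$, which contributes $d\cdot(1/d+\text{small})$. I then aim to show
\[
E_{d,n}\to 1+c_d
\]
for explicit constants $c_d$ with $\sum_d c_d=\sum_{m\ge1}\bigl(1-(p^{-m};p^{-1})_\infty\bigr)$. For $d=1$ the exact identity $E_{1,n}=1$ from \cite[Section 9]{shen2026eigenvalues} serves as a consistency check. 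The constant should then arise from a $q$-series identity: the $q$-Pochhammer factors come from the $\GL_m(\F_p)$-type normalizations in the correlation formula, and summing over $d$ rearranges them into $\sum_m\bigl(1-(p^{-m};p^{-1})_\infty\bigr)$.

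\textbf{Step 3: uniformity, the main obstacle.} Pointwise convergence in $d$ is not enough, because $r_n\to\infty$. I need a bound $|E_{d,n}-1-c_d|\le \varepsilon_d+\delta_{n-d}$, where $\varepsilon_d$ is summable in $d$ and $\delta_k\to0$ as $k\to\infty$. The $n$-dependence in the correlation formula should enter through finite $q$-Pochhammer products such as $(p^{-1};p^{-1})_{n-d}$, whose convergence to the infinite product is controlled by $p^{-(n-d)}$. This is why the hypothesis $n-r_n\to\infty$ appears.

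The hardest part is bounding the ramified contributions uniformly over all extensions of degree $d$, as the number of such fields grows. I would first try Krasner/Serre mass-formula bounds, giving total ramified mass at most $Cp^{-d/2}$. Summing the uniform estimate over $d\le r_n$ then yields
\[
r_n+\sum_{d\le r_n}c_d+o(1)=r_n+\sum_{m\ge1}\bigl(1-(p^{-m};p^{-1})_\infty\bigr)+o(1),
\]
using $r_n\to\infty$ together with the summability of the $\varepsilon_d$ and the tail of $\sum_d c_d$.
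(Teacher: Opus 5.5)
Your proposal has a genuine gap where the constant has to be identified.

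\textbf{Steps 1--2 rest on formulas that are not available.} They assume that integrating the one-point density gives a closed formula for $E_{d,n}$, and explicit limits $1+c_d$. But the density in \Cref{prop: prelimiting correlation function} depends on $x$ through $\#(\Lambda\backslash\Mod_{\Z_p[x]})/\#(\mc O_K/\Z_p[x])^2$ and through $\E||\det(Z_x(A))||$. Neither has a closed form for general $K$. Even $\lim_n\E[Z_{\mc O_K^{\new},n}]$ for one fixed $K$ is known only up to the error term $\mathcal E(K)$ of \Cref{lem: arbitrary extension limit}; explicit values exist only for $\Z_p$ and quadratic fields. So there are no explicit $c_d$ to feed into a $q$-series identity, and $\sum_d c_d=\sum_{m\ge1}(1-(p^{-m};p^{-1})_\infty)$ is asserted with no mechanism. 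The identity is true, but only as a consequence of the theorem.

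\textbf{Your heuristic points to the wrong place.} The products $(p^{-m};p^{-1})_\infty$ do not come from $\GL$-type normalizations at fixed low degree, where the prefactor $\prod_{i=0}^{d-1}(1-p^{-n+i})$ tends to $1$. They come from the top degrees $d=n-m+1$, where this prefactor equals $\prod_{j=m}^{n}(1-p^{-j})$. Done carefully, your low-degree route only gives $\sum_{d\le r_n}E_{d,n}-r_n\to\sum_d(L_d-1)$ with $L_d=\lim_nE_{d,n}$. That constant is unidentified, and evaluating it is equivalent to the top-degree computation below.

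\textbf{The missing idea.} Use the identity $\sum_{d\le n}E_{d,n}=n$, which you wrote down but never used, and work only in the range $r_n<d\le n$, where nothing needs to be computed exactly.
\begin{enumerate}
\item \Cref{lem: estimate of orbital integral} and $\E||\det(Z(A))||\le 1$ give the pointwise bound $\E[Z_{\mc O_K^{\new},n}]\le\frac{||\Disc_{K/\Q_p}||}{1-p^{-f}}\prod_{i=0}^{d-1}(1-p^{-n+i})$. Only with this bound does \Cref{lem: serre mass} show that ramified fields of degree $>r_n$ contribute at most $2\sum_{m>r_n}m^2p^{-m/2}=o(1)$.
\item For the unramified field $K_d=\Q_p[\zeta_{p^d-1}]$, restrict the integral to $x$ with $\mc O_{K_d}=\Z_p[x]$. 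This set has measure $G_d/p^d$, the lattice ratio is $1$ there, and $\bar Z_x$ is irreducible. Then prove $\E||\det(Z(A))||\ge 1-\frac{1}{p^d-1}$ by counting matrices over $\F_p$ having a $d$-dimensional invariant subspace with characteristic polynomial $\bar Z$ (\Cref{lem: determinant lower bound}). This gives a matching lower bound, so $|1-\E[Z_{\mc O_{K_d}^{\new},n}]|\le C_p(p^{-d/2}+p^{-(n-d+1)})$.
\item Split at $d=n-M$. The deficits $1-\E[Z_{\mc O_{K_d}^{\new},n}]$ over $r_n<d\le n-M$ sum to $O(p^{-r_n/2}+p^{-M})$. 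Over the last $M$ degrees they converge to $\sum_{m=1}^{M}(1-(p^{-m};p^{-1})_\infty)$, by the two bounds above applied with prefactor $\prod_{j=m}^{n}(1-p^{-j})$. Then let $M\to\infty$.
\end{enumerate}

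\textbf{Step 3 bookkeeping also fails as written.} The condition $\delta_k\to0$ must be strengthened to $\sum_k\delta_k<\infty$. Also, $\sum_{d\le r_n}\varepsilon_d$ tends to $\sum_d\varepsilon_d$, not to $0$. You would need pointwise convergence for $d\le M$ and the tail bound only for $d>M$.
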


For example, taking $r_n=\lceil \alpha n\rceil$ where $\alpha\in(0,1)$ is a
fixed constant, \Cref{thm: zeros degree under rn} implies that
\begin{equation*}
\lim_{n\rightarrow\infty}\frac{1}{n}
\sum_{[K:\Q_p]\le\lceil \alpha n\rceil}
\E\bigl[Z_{\mathcal O_K^{\new},n}\bigr]
=\alpha.
\end{equation*}
This is the sense in which the eigenvalues are asymptotically evenly distributed among possible extension degrees: a cutoff at degree approximately $\alpha n$ captures approximately an $\alpha$-proportion of the eigenvalues in expectation. This is in sharp contrast with the real Ginibre ensemble.
Indeed, for an $n\times n$ real Ginibre matrix, the asymptotic
\eqref{eq: expected number of real eigenvalues} shows that the expected number
of real eigenvalues is asymptotic to $\sqrt{2n/\pi}$, and hence real
eigenvalues form only a vanishing proportion of all eigenvalues.

Let us briefly explain the main input behind the proof. The starting point is
the correlation function formula from the author's previous joint work with Van Peski \cite[Lemma 1.13]{shen2026eigenvalues}. Here the one-point
correlation function is an explicit density function whose integral over a
$p$-adic region gives the expected number of eigenvalues lying in that region.
Applied to a finite extension $K/\Q_p$, this formula gives such a density for eigenvalues lying in $\mathcal O_K^{\new}$. The main additional estimate in this paper is to control these integrands uniformly as both the degree of $K$ and the matrix size $n$ vary. In particular, for the unique unramified extension of degree $r$, this expectation is close to $1$ whenever both $r$ and $n-r$
are large. On the other hand, the total contribution of ramified extensions of large degree can be controlled using the discriminant factor in the correlation function together with the mass formula recalled later in \Cref{lem: serre mass}. Combining these two ingredients gives the extension-degree asymptotic in \Cref{thm: zeros degree under rn}.

The same calculation also leads to another natural way to organize the eigenvalues, namely according to whether they lie in the maximal unramified extension $\Q_p^{\un}$. Since the maximal unramified
extension $\Q_p^{\un}$ contains exactly one unramified extension of each degree, the preceding estimate suggests that
$\Q_p^{\un}$ should capture almost all eigenvalues in expectation. Our second main result confirms this expectation. More precisely, as the matrix size tends to infinity, the expected number of eigenvalues which fail to lie in $\Q_p^{\un}$ remains bounded.

\begin{thm}\label{thm: outside un}
The limit
\begin{equation*}
\lim_{n\rightarrow\infty}\E[Z_{\bar\Q_p\backslash\Q_p^{\un},n}]=\lim_{n\rightarrow\infty}\left(n-\E[Z_{\Q_p^{\un},n}]\right)
\end{equation*}
exists. Moreover, this limit is positive but no larger than
$\frac{p(4p-3)}{(p-1)^3}$.
\end{thm}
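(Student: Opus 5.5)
Write $\E[Z_{\bar\Q_p\backslash\Q_p^{\un},n}]=\sum_{K \text{ ramified}}\E[Z_{\mathcal O_K^{\new},n}]$, the sum running over finite extensions $K/\Q_p$ inside $\bar\Q_p$ with ramification index $e_K>1$. Every eigenvalue generates a unique finite extension $\Q_p(\lambda)$, and $\lambda\in\Q_p^{\un}$ iff $\Q_p(\lambda)$ is unramified, so this decomposition is exact. The equality $n-\E[Z_{\Q_p^{\un},n}]$ with this sum is immediate.

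For each fixed $K$ of degree $d$, I will use the one-point correlation formula \cite[Lemma 1.13]{shen2026eigenvalues}. It expresses $\E[Z_{\mathcal O_K^{\new},n}]$ as an integral over $\mathcal O_K^{\new}$ of an explicit density. This density involves a discriminant factor, roughly $|\Disc(\Q_p(x)/\Q_p)|_p$, times a ratio of $q$-Pochhammer-type products depending on $n$, $d$ and the residue degree. From this formula I will extract two facts.

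\textbf{(a) Pointwise limit.} For each fixed $K$, $\E[Z_{\mathcal O_K^{\new},n}]$ converges as $n\to\infty$ to some $c_K\ge 0$, since the $n$-dependence enters only through products like $(p^{-1};p^{-1})_{n-d}$-type factors that converge.

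\textbf{(b) Uniform bound.} There is a bound $\E[Z_{\mathcal O_K^{\new},n}]\le C\,\mu(K)$, uniform in $n$, where $\mu(K)$ is essentially $|\Disc K|_p$ divided by $|\mathrm{Aut}(K/\Q_p)|$ (up to factors from the residue field size). Such a bound is the kind of uniform estimate over varying extensions developed for \Cref{thm: zeros degree under rn}.

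Dominated convergence over the countable set of ramified $K$ then gives existence of the limit $\sum_K c_K$, provided $\sum_{K\text{ ramified}}\mu(K)<\infty$.

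The summability, and the explicit constant, come from Serre's mass formula (\Cref{lem: serre mass}). For fixed residue degree $f$ and ramification index $e$, the weighted count of totally ramified extensions of degree $e$ over the unramified extension of degree $f$ has total mass $\sum_{L}q^{-(d(L)-e+1)}/|\mathrm{Aut}|=1$ with $q=p^f$. Hence the discriminant-weighted mass of degree-$ef$ extensions with these invariants is of order $p^{-f(e-1)}$ times bounded factors. Summing over $e\ge2$ and $f\ge1$ gives a double geometric series of the shape $\sum_{f\ge1}\sum_{e\ge2}p^{-f(e-1)}\cdot(\text{poly factors})$. Tracking the exact prefactors from the density (for instance factors like $(1-p^{-1})^{-1}$ coming from $q$-Pochhammer ratios bounded by their $n=\infty$ values, and the conversion from counting extensions to counting embeddings of roots) should produce the stated bound $\frac{p(4p-3)}{(p-1)^3}$.

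\textbf{Main obstacle.} I expect this bookkeeping to be the hardest step: obtaining a bound for the density on $\mathcal O_K^{\new}$ that is uniform in $n$ and sharp enough in its dependence on $e$ and $f$ to land exactly on $p(4p-3)/(p-1)^3$. I would first bound the $n$-dependent Pochhammer ratio by its supremum over $n$, then integrate the discriminant of $\Q_p(x)$ over $\mathcal O_K^{\new}$ using $|\Disc(\Q_p(x))|\le|\Disc K|$ with equality generically. Only then would I apply the mass formula.

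\textbf{Positivity.} It suffices to exhibit one ramified $K$ with $c_K>0$, e.g.\ $K=\Q_p(\sqrt p)$. The limiting density is strictly positive on the nonempty open set $\mathcal O_K^{\new}$, which yields $c_K>0$. The paper's quadratic formulas in \cite[Section 9]{shen2026eigenvalues} already confirm this directly.
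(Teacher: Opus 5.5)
Your route is correct in outline and genuinely different from the paper's. The paper never sums over ramified fields. It writes $n-\E[Z_{\Q_p^{\un},n}]=\sum_{r\le n}\bigl(1-\E[Z_{\mathcal O_{K_r}^{\new},n}]\bigr)$ over the unramified $K_r$. It then controls this sum with the \emph{two-sided} estimate of \Cref{prop: large extension}, whose lower bound needs \Cref{lem: determinant lower bound}, split into three degree ranges. The range $r$ near $n$ produces the boundary series $\sum_m(1-(p^{-m};p^{-1})_\infty)$. The constant $\frac{p(4p-3)}{(p-1)^3}$ comes from bounding that series together with $\sum_m(1-\lim_n\E[Z_{\mathcal O_{K_m}^{\new},n}])$.

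You instead apply dominated convergence directly to $\sum_{K\text{ ramified}}\E[Z_{\mathcal O_K^{\new},n}]$. This is the mass-formula tail argument the paper uses inside the proof of \Cref{thm: zeros degree under rn}, extended to all ramified $K$. It needs only pointwise limits and an upper bound, avoids the boundary layer, and identifies the limit as $\sum_{K\text{ ramified}}c_K$. The paper's route expresses the same limit through unramified data, so together the two give a nontrivial identity.

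Several of your justifications need replacing.
\begin{itemize}
\item The density in \Cref{prop: prelimiting correlation function} is
\begin{equation*}
\prod_{i<r}(1-p^{-n+i})\cdot\frac{\|\Disc_{K/\Q_p}\|}{1-p^{-f}}\cdot\frac{\#(\Lambda\backslash\Mod_{\Z_p[x]})}{\#(\mathcal O_K/\Z_p[x])^2}\cdot\E\|\det Z(A)\|,\qquad A\in\Mat_{n-r}(\Z_p).
\end{equation*}
The $n$-dependence through $\E\|\det Z(A)\|$ is not a Pochhammer factor, so (a) cannot be read off from the formula. It is \Cref{lem: arbitrary extension limit}.
\item For (b), bounding $|\Disc(\Q_p(x))|$ is vacuous, since $\Q_p(x)=K$ on $\mathcal O_K^{\new}$. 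The factor to control is the orbital one, which \Cref{lem: estimate of orbital integral} bounds by $1$. This gives \Cref{prop: large extension}: $\E[Z_{\mathcal O_K^{\new},n}]\le\|\Disc_{K/\Q_p}\|/(1-p^{-f})$ for all $n$.
\item Do not divide by $|\mathrm{Aut}(K/\Q_p)|$. Such a bound is false: for $K=\Q_p(\zeta_{p^k})$, which is Galois and totally ramified, \Cref{lem: arbitrary extension limit} gives $c_K>(1-p^{-1})\|\Disc_{K/\Q_p}\|$. It is also unnecessary, because \Cref{lem: serre mass} already sums over subfields of $\bar\Q_p$.
\end{itemize}

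The constant is not really an obstacle, and you should not expect to land exactly on it, since the theorem only asks for an inequality. Your route gives
\begin{equation*}
\sum_{K\text{ ramified}}c_K\le\sum_{f\ge1}\frac{1}{1-p^{-f}}\sum_{e\ge2}e\,p^{-(e-1)f}=\sum_{f\ge1}\frac{p^{-f}(2-p^{-f})}{(1-p^{-f})^3}.
\end{equation*}
\begin{itemize}
\item The $f=1$ term equals $\frac{p(2p-1)}{(p-1)^3}$.
\item For $f\ge2$, each term is at most $2p^{-f}(1-p^{-2})^{-3}$. The tail is therefore at most $\frac{2}{p(p-1)(1-p^{-2})^3}\le\frac{2p}{(p-1)^2}$, which is equivalent to $p^4(p-1)\le(p^2-1)^3$.
\end{itemize}
Adding the two gives exactly $\frac{p(4p-3)}{(p-1)^3}$. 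Your bound is in fact of order $2/p$ as $p\to\infty$, versus the paper's $4/p$. Your positivity argument is the same as the paper's.
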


This result is somewhat surprising from the viewpoint of algebraic extensions.
Indeed, the maximal unramified extension is only a very small part of $\overline{\Q}_p$. For each fixed positive integer $r$, there are many degree $r$ extensions of $\Q_p$, but only one of them is unramified, namely
\begin{equation*}
\Q_p[\zeta_{p^r-1}],
\end{equation*}
where $\zeta_{p^r-1}$ is a primitive $(p^r-1)$-st root of unity. \Cref{thm: outside un} says that, nevertheless, the contribution of all ramified extensions together is bounded in expectation as $n\to\infty$.

One can also consider other distinguished infinite algebraic extensions of
$\Q_p$, for example the maximal abelian extension $\Q_p^{\ab}$ and the maximal tamely ramified extension $\Q_p^{\tame}$. Since both of these fields contain
$\Q_p^{\un}$, we have the inequalities
\begin{equation*}
Z_{\Q_p^{\ab},n}\ge Z_{\Q_p^{\un},n},
\qquad
Z_{\Q_p^{\tame},n}\ge Z_{\Q_p^{\un},n}.
\end{equation*}
Therefore \Cref{thm: outside un} already implies that the expected numbers of
eigenvalues outside $\Q_p^{\ab}$ and outside $\Q_p^{\tame}$ are bounded as $n\to\infty$. Moreover, following the same approach, one can prove the stronger statement that the corresponding limits exist and are finite.

Let us also mention two natural directions for further study. First, one may ask for more refined asymptotics in the above results. In the archimedean setting, Edelman-Kostlan-Shub \cite{edelman1994many} obtained an asymptotic expansion for the
expected number of real eigenvalues of the real Ginibre ensemble to arbitrary
order. By contrast, our arguments for \Cref{thm: outside un} and
\Cref{thm: zeros degree under rn} are designed to prove convergence and
identify the leading-order behavior, but they do not capture finer asymptotic
terms. Obtaining such refinements would require substantially sharper uniform estimates for the relevant correlation functions, which seem out of reach with the methods used here.

Another natural direction is to study $p$-adic random matrix ensembles with additional symmetries, for example Hermitian or alternating matrices over
$p$-adic rings. This differs in an essential way from the archimedean
setting: for Hermitian matrices, all eigenvalues are real, and for real
alternating matrices, all nonzero eigenvalues are purely imaginary. In the $p$-adic setting, however, the analogous symmetry conditions need not force all eigenvalues to lie in a fixed finite extension of $\Q_p$. Eigenvalues may still generate extensions of varying degrees.  It is therefore natural to expect suitable analogues of the phenomena studied in this paper: almost all eigenvalues should lie in the maximal unramified extension
$\Q_p^{\un}$, and, in expectation, the eigenvalues should be asymptotically evenly distributed from the perspective of extension degree.

\subsection{Random Haar polynomial}

Apart from the eigenvalues of a random matrix, we also consider the closely
related model of random polynomials over $\Z_p$. Let 
$$\Omega_n(\Z_p):=\{a_nx^n+a_{n-1}x^{n-1}+\cdots+a_1x+a_0:a_0,\ldots,a_n\in\Z_p\}$$ 
be the space of polynomials over $\Z_p$
of degree at most $n$.

\begin{defi}
For a subset $U\subset \overline{\Q}_p$, we define
\begin{equation*}
Z_{U,n}^{\poly}=Z_{U,n}^{\poly}(P),
\qquad
P\in\Omega_n(\Z_p),
\end{equation*}
to be the number of roots of $P$ which lie in $U$, counted with multiplicity.
Thus $Z_{U,n}^{\poly}$ is a function on $\Omega_n(\Z_p)$.
\end{defi}

In the following, we regard $\Omega_n(\Z_p)$ as a probability space by taking the coefficients $a_0,\ldots,a_n$ to be independent random variables
distributed according to the additive Haar probability measure on $\Z_p$. As in the matrix setting, we only consider subsets $U$ for which $Z_{U,n}^{\poly}$ is measurable and thus can be regarded as a random variable on $\Omega_n(\Z_p)$.

There is, however, one important difference between the polynomial setting and the matrix setting.  If $A\in\Mat_n(\Z_p)$, then every eigenvalue of $A$ which lies in a finite extension $K/\Q_p$ must in fact lie in $\mc O_K$.  By contrast, a root of a polynomial in $\Omega_n(\Z_p)$ may have $p$-adic absolute value larger than $1$.  Thus, for a finite extension $K/\Q_p$, roots which generate $K$ need not lie in $\mc O_K^{\new}$.  Accordingly, in the polynomial setting we write $Z_{K^{\new},n}^{\poly}$ for the number of
roots of $P_n$ which lie in $K^{\new}$.

This polynomial model leads to the same two questions as the random matrix
model: how many roots lie in the maximal unramified extension, and how many
roots generate extensions of degree at most a given cutoff? The following two
results give analogues of \Cref{thm: zeros degree under rn} and \Cref{thm: outside un}.

\begin{thm}\label{thm: poly zeros degree under rn}
Suppose the positive integer sequence $(r_n)_{n\ge 1}$ satisfies
\begin{equation*}
\lim_{n\rightarrow\infty} r_n=+\infty,
\qquad
\lim_{n\rightarrow\infty}(n-r_n)=+\infty.
\end{equation*}
Then, we have
\begin{equation*}
\sum_{[K:\Q_p]\le r_n}
\E\bigl[Z_{K^{\new},n}^{\poly}]=r_n+\frac{1}{p-1}+o(1).
\end{equation*}
\end{thm}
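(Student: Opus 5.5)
We will follow the same scheme as for \Cref{thm: zeros degree under rn}, with the matrix correlation function replaced by Caruso's one-point correlation function for roots of Haar polynomials \cite{caruso2022zeroes}. For a finite extension $K/\Q_p$ of degree $d$, Caruso's formula expresses $\E[Z^{\poly}_{K^{\new},n}]$ as an integral over $K^{\new}$ of an explicit density $\rho_{K,n}(x)$. We will use its structure: a discriminant factor $|\Disc(K)|^{c}$ for some exponent $c$, a Haar normalization on $K$, and a factor measuring how likely the degree $n-d$ cofactor (or the lattice of integer polynomials vanishing at $x$) is unconstrained. Because roots may lie outside $\mc O_K$, we will use the symmetry $x\mapsto 1/x$ (the reversed polynomial $x^nP(1/x)$ is again Haar-distributed on $\Omega_n(\Z_p)$) to write
\[
\E[Z^{\poly}_{K^{\new},n}]=\E[Z^{\poly}_{\mc O_K^{\new},n}]+\E[Z^{\poly}_{\mc O_K^{\new}\cap \mathfrak m_K,n}],
\]
so that both pieces become integrals over the unit ball of $K$.

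\textbf{Step 1 (unramified extensions).} For the unramified extension $K_r$ of degree $r$, we will show that
\[
\E[Z^{\poly}_{K_r^{\new},n}]=1+\epsilon_{r,n}+\delta_r,
\]
where $\epsilon_{r,n}\to0$ uniformly when $r\to\infty$ and $n-r\to\infty$, and $\delta_r$ is a correction that is summable in $r$. The idea is that elements of $\mc O_{K_r}$ whose reduction generates $\F_{p^r}$ carry almost all the mass. Heuristically, a Haar polynomial has about $\frac{1}{r}\cdot\#\{\text{monic irreducibles of degree } r \text{ over }\F_p\}\cdot r\cdot p^{-r}\approx 1$ root in this set, since each irreducible factor of the reduction lifts by Hensel's lemma. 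The remaining elements, with non-generating reduction, contribute a term of order $p^{-r/2}$ or smaller.

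\textbf{Step 2 (ramified extensions).} We will bound the total ramified contribution of degree $d$ using the discriminant factor together with the Serre mass formula (\Cref{lem: serre mass}). The aim is a bound $\sum_{[K:\Q_p]=d,\,K \text{ ramified}}\E[Z^{\poly}_{K^{\new},n}]\le C\,p^{-\gamma d}$ for some $\gamma>0$, uniformly in $n$. This makes the ramified contributions summable and lets us pass to the limit term by term by dominated convergence. The small degrees, which are fixed as $n\to\infty$, converge to their $n=\infty$ values, where the infinite-$n$ density is explicit.

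\textbf{Step 3 (assembling).} Summing over $[K:\Q_p]\le r_n$ gives
\[
\sum_{[K:\Q_p]\le r_n}\E\bigl[Z^{\poly}_{K^{\new},n}\bigr]=r_n+\Sigma+o(1),
\]
where $\Sigma$ collects the limits of all deviations from $1$: both the unramified corrections $\delta_r$ and the ramified totals. For the small-$r$ and boundary terms, a uniform bound lets the $o(1)$ absorb the degrees $r\le R$ with $R$ fixed, as well as the degrees $r$ close to $r_n$.

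The remaining task is to identify $\Sigma=\frac{1}{p-1}$. We will do this without computing the sum directly. Sending $r_n\to n$ formally, or using that the total number of roots is exactly $n$ on a set of full measure, gives a consistency identity. Concretely, we will compare with the total count $\sum_K\E[Z^{\poly}_{K^{\new},n}]=n$ minus the expected mass lost to degrees $d$ near $n$. That lost mass is computable because the contribution of degree $n-j$ for fixed $j$ tends to an explicit limit. Alternatively, we can compute $\Sigma$ through the limiting density at $n=\infty$, where Caruso's formula becomes a product over $\mathfrak m$-adic shells. In that case the $x\mapsto1/x$ part contributes $\sum_{k\ge1}p^{-k}=\frac{1}{p-1}$, and the unit-ball parts sum exactly to $r$ plus terms that vanish.

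\textbf{Main obstacle.} We expect the hardest step to be Step 2 together with the uniformity in Step 1. This requires bounds on Caruso's density that hold uniformly in both $d=[K:\Q_p]$ and $n-d$, and fine enough that the mass formula's count of ramified extensions is beaten by the discriminant decay. We will first try to bound the density by its $n=\infty$ limit times $(1+O(p^{-(n-d)}))$, mirroring the matrix argument.
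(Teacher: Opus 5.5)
Your skeleton matches the paper's: there are exactly $n$ roots almost surely, ramified fields are killed by a bound $\E[Z^{\poly}_{K^{\new},n}]\le C\|\Disc_{K/\Q_p}\|$ together with \Cref{lem: serre mass}, and unramified fields contribute about $1$ each. The gap is that the step producing the constant $\frac{1}{p-1}$ is never carried out.

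Your Step 1 only describes $\E[Z^{\poly}_{K_r^{\new},n}]$ when $n-r\to\infty$. In Step 3 you say the mass lost at degrees $n-j$ ``is computable'' without computing it. Nothing in Steps 1--2 evaluates your $\Sigma$. The paper writes
\[
\sum_{[K:\Q_p]\le r_n}\E[Z^{\poly}_{K^{\new},n}]-r_n=\sum_{r=r_n+1}^{n}\bigl(1-\E[Z^{\poly}_{K_r^{\new},n}]\bigr)+o(1),
\]
and then needs two facts about $K_r=\Q_p[\zeta_{p^r-1}]$. Both come from Caruso's estimate $-p^{-r}\le \E[Z^{\poly}_{K_r^{\new},n}]-\frac{p^{n+1}-p^r}{p^{n+1}-1}\cdot\frac{G_r}{p^r}\le 4p^{-r}$ for $r\le n\le 2r-1$, together with stabilization for $n\ge 2r-1$.

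(i) The quantitative bound $|1-\E[Z^{\poly}_{K_r^{\new},n}]|\le C_p(p^{-r/2}+p^{-(n-r+1)})$ for all $1\le r\le n$. This is what lets you sum over a growing range of degrees; uniform convergence $\epsilon_{r,n}\to 0$ alone does not.

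(ii) The boundary limit $\E[Z^{\poly}_{K_{n-m+1}^{\new},n}]\to 1-p^{-m}$ for each fixed $m$. The deficits $p^{-m}$ sum to $\frac{1}{p-1}$, and this is the whole source of the constant.

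Your Hensel heuristic would give (ii) if done exactly. A fixed monic irreducible of degree $r$ divides a nonzero reduction $\bar P$ of degree at most $n$ with probability $(p^{n-r+1}-1)/p^{n+1}$. This gives the count $\frac{G_r}{p^r}\bigl(1-p^{-(n-r+1)}\bigr)$, including exactly the factor you dropped. Your displayed count also carries a stray factor $\frac1r$, since there are about $p^r/r$ monic irreducibles of degree $r$.

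Your alternative route through the $n=\infty$ density is circular. The inversion part is fine: summed over all $K$, it is the expected number of roots of absolute value $>1$, which tends to $\frac{1}{p-1}$ by a Newton polygon count. But you then assert that the unit-ball contributions over $[K:\Q_p]\le r_n$ sum to $r_n+o(1)$. Since the inversion contributions from degrees $>r_n$ are negligible, this assertion is equivalent to the theorem. Nothing in the limiting densities proves it, because the cancellation is forced only by the finite-$n$ total count combined with (ii).

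Finally, the uniform bounds you call the main obstacle are available directly from Caruso. For ramified fields, use Corollary 4.6 with the monotonicity of Theorem B.5; for unramified fields, use Theorem 4.7 with stabilization. The real work is the boundary computation.
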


\begin{thm}\label{thm: poly outside un}
The limit
\begin{equation*}
\lim_{n\rightarrow\infty}
\E\bigl[
Z_{\overline{\Q}_p\setminus\Q_p^{\un},n}^{\poly}
\bigr]
\end{equation*}
exists. Moreover, this limit is positive but no larger than $\frac{(4p-3)}{(p-1)^2}$.
\end{thm}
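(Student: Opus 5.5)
Since every root of a nonzero $P\in\Omega_n(\Z_p)$ lies in $\overline{\Q}_p$, I will split
\[
\E\bigl[Z^{\poly}_{\overline{\Q}_p\setminus\Q_p^{\un},n}\bigr]=\sum_{K\ \text{ramified}}\E\bigl[Z^{\poly}_{K^{\new},n}\bigr],
\]
where the sum is over finite extensions $K/\Q_p$ (inside $\overline{\Q}_p$) with ramification index $e_K>1$. Each term will be written as the integral over $K^{\new}$ of Caruso's one-point correlation function \cite{caruso2022zeroes}. For a root $x$ that generates $K$, this density has the form
\[
\rho_{K,n}(x)=\lvert\Disc(K)\rvert^{1/2}\cdot(\text{factor depending on }\lvert x\rvert \text{ and } n)\cdot\bigl(1+\text{error}\bigr).
\]
The first step is to check that, for each fixed $K$, the integral $\int_{K^{\new}}\rho_{K,n}$ converges as $n\to\infty$. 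The dependence on $n$ should enter only through events like ``the first $n-[K:\Q_p]$ coefficients of the cofactor behave generically'', which stabilize geometrically in $n$. Call the limit $c_K$.

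To interchange the limit with the infinite sum over $K$, I will use dominated convergence. This needs a bound $\E\bigl[Z^{\poly}_{K^{\new},n}\bigr]\le C_K$, uniform in $n$, with $\sum_{K\ \text{ramified}}C_K<\infty$. The natural candidate for $C_K$ is a constant times $\lvert\Disc(K)\rvert$, possibly times a $p$-power factor from the region $\lvert x\rvert>1$. To handle roots outside the unit ball, I will use the symmetry $x\mapsto 1/x$ on reversed polynomials, which preserves the Haar measure on $\Omega_n(\Z_p)$. This reduces everything to $\mc O_K^{\new}$, at the cost of a factor at most $2$; more precisely, the region $\lvert x\rvert>1$ reduces to $\mathfrak m_K$ and is counted once.

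The summability would then follow from Serre's mass formula (\Cref{lem: serre mass}). For each degree $r$ it gives
\[
\sum_{[K:\Q_p]=r,\ \text{totally ramified}}\lvert\Disc(K)\rvert\,p^{-?}=1,
\]
and I would extend this to general $K$ by factoring $K/K^{\un}$ with inertia degree $f$. Summing over $e\ge2$ and $f\ge1$ should produce a geometric series. I would push this calculation to get exactly the constant $\frac{4p-3}{(p-1)^2}$: the extra factor of $\frac{1}{p-1}$ relative to the matrix bound in \Cref{thm: outside un} comes from the polynomial density lacking the $(p^{-m};p^{-1})_\infty$-type factors and having one fewer $p/(p-1)$.

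Positivity is easy once the limit exists. It suffices to exhibit one ramified extension with $c_K>0$, for instance $K=\Q_p(\sqrt p)$. There $c_K$ is positive because the Eisenstein polynomial $x^2-p$ has a neighborhood of positive Haar measure consisting of polynomials that have a root in $K^{\new}$ (by Krasner's lemma), and this property persists under multiplication by a generic cofactor.

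I expect the main obstacle to be the uniform-in-$n$ domination, specifically bounding Caruso's density uniformly when $[K:\Q_p]$ is comparable to $n$. When $r=[K:\Q_p]$ is near $n$, the cofactor has small degree and the density's correction factors are not close to their limits, so I would bound them crudely by a constant. I would also check that the sum $\sum_{r\le n}$ over such $K$ still converges using only the discriminant decay $\lvert\Disc(K)\rvert\le p^{-(r-r/e)}$. If that crude bound fails, I would first try to extract an extra factor of $p^{-(n-r)}$, or a similar factor, from the density.
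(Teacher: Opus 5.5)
\textbf{The gap is the explicit constant.} Summing per-field bounds over ramified $K$ with \Cref{lem: serre mass} does not reach $\frac{4p-3}{(p-1)^2}$. A bound $\E[Z^{\poly}_{K^{\new},n}]\le C_f||\Disc_{K/\Q_p}||$ yields $\sum_{f\ge1}C_f\bigl((1-p^{-f})^{-2}-1\bigr)$.

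The sharpest uniform bound available here is $C_f=1+p^{-f}$. On $\mc O_K^{\new}$, both $\#(\mc O_K/\Z_p[x])^{-1}$ and $\int||P(x)||^r\,dP$ are at most $1$, and $x\mapsto x^{-1}$ turns the region $||x||>1$ into the maximal ideal, which has measure $p^{-f}$. At $p=2$ the terms $f=1,2$ alone already give $\frac32\cdot3+\frac54\cdot\frac79=\frac{197}{36}>5=\frac{4p-3}{(p-1)^2}$. With \Cref{prop: bound for not unramified_poly}, the $f=1$ term alone is $9$.

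For large $p$ your sum behaves like $2/p$, so the failure is at small primes; but the theorem is stated for every $p$. To rescue $p=2$ you would need per-field estimates far sharper than anything in the proposal, and the resulting expression would not be the stated one. Your explanation via missing $(p^{-m};p^{-1})_\infty$ factors is pattern-matching, not a computation.

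The missing idea is to bound the ramified mass from the unramified side, where Caruso's two-sided estimates are sharp to $O(p^{-r})$. Write $n-\E[Z^{\poly}_{\Q_p^{\un},n}]=\sum_{r\le n}\bigl(1-\E[Z^{\poly}_{K_r^{\new},n}]\bigr)$ with $K_r=\Q_p[\zeta_{p^r-1}]$. By \Cref{prop: bounds for unramified extension_poly}, the degrees $r=n-m+1$ have deficit tending to $p^{-m}$, contributing $\frac1{p-1}$. The stabilized deficits satisfy $1-\E[Z^{\poly}_{K_m^{\new},2m-1}]\le 2p^{-m}+\sum_{d\mid m,\,d<m}p^{d-m}$, which sum to at most $\frac{2}{p-1}+\frac{p}{(p-1)^2}$.

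\textbf{Existence and positivity are fine, and your route differs genuinely from the paper's.} By Caruso's Theorem B (monotone in $n$, constant for $n\ge 2[K:\Q_p]-1$), each $c_K$ exists without any heuristic about generic coefficients. The bound $(1+p^{-f})||\Disc_{K/\Q_p}||$ holds for every $n\ge[K:\Q_p]$, so the obstacle you anticipate when $[K:\Q_p]$ is close to $n$ does not arise. Since $\sum_{e\ge2,\,f\ge1}(1+p^{-f})\,e\,p^{-(e-1)f}<\infty$, dominated convergence identifies the limit as $\sum_{K\ \text{ramified}}c_K$. Positivity is then immediate, either from your Krasner argument at $n=3$, or because $\rho^{(n),\poly}_K>0$ on $\mc O_K^{\new}$.

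One correction: the density carries $||\Disc_{K/\Q_p}||/\#(\mc O_K/\Z_p[x])$, not $||\Disc_{K/\Q_p}||^{1/2}$. This matters, because the mass formula controls only the former.

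The paper never handles ramified fields individually in this proof. It identifies the limit as $\sum_m\bigl(1-\E[Z^{\poly}_{K_m^{\new},2m-1}]\bigr)+\frac1{p-1}$, and that identification is what makes the stated constant accessible.
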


The proofs of \Cref{thm: poly zeros degree under rn} and \Cref{thm: poly outside un} use the correlation functions for
random Haar polynomials obtained by Caruso \cite{caruso2022zeroes}. The argument is parallel to the random matrix case: one estimates the corresponding one-point correlation functions uniformly as both the degree of the extension and the degree of the polynomial vary.

Let us again point out the contrast with the archimedean analogue. For a random
real polynomial of degree $n$ with independent Gaussian coefficients,
Edelman-Kostlan \cite{edelman1995many} proved that the expected number of real
roots is asymptotic to $\frac{2}{\pi}\log n$. Thus only a vanishing proportion of its roots lie in the base field $\R$. By contrast, the $p$-adic Haar polynomial model exhibits the same qualitative
behavior as the $p$-adic random matrix model, where the roots are asymptotically evenly distributed from the perspective of extension degree.

\subsection{Outline of the paper}

In \Cref{sec: Preliminaries}, we collect the correlation function formula from
\cite{shen2026eigenvalues}, the needed lattice counting estimate, and the local field facts used to sum over ramified extensions. In \Cref{sec: Proof}, we prove the random matrix results by establishing uniform estimates for finite extensions and applying them to unramified extensions and degree cutoffs. Finally, in \Cref{sec: The random polynomial case}, we use the correlation function formula from
\cite{caruso2022zeroes} to prove the analogous results for random Haar polynomials.

\section{Preliminaries}\label{sec: Preliminaries}

In this section, we collect the preliminary results needed for the proof of the main theorems stated in \Cref{sec: Intro}. We also recall a few standard facts about finite extensions of $\Q_p$. For a more detailed introduction to $p$-adic fields and their extensions, see the textbook of Neukirch \cite{neukirch2013algebraic}. For background on the lattice interpretation of $p$-adic orbital integrals, which underlies the correlation formula used here, we refer to the note of Yun \cite{yun2013orbital}.

\begin{defi}\label{defi: Den and distance}
Let $K/\Q_p$ be an extension of degree $r$, $\mc{O}_K$ its ring of integers, and $x \in \mc{O}_K^{\new}$. Then we define
$$\Mod_{\Z_p[x]}=\{M\subset K\mid M \text{ is a $\Z_p[x]$-module, also a $\Z_p$-lattice of rank $r$}\}.$$ 
We have a canonical action of the group $K^\times$ on $\Mod_{\Z_p[x]}$ by multiplication, and similarly for its subgroup $\Lambda = \{\pi^n: n \in \Z\}$ where $\pi$ is a uniformizer of $K$. This latter action has finitely many orbits, and we denote the number by $\#(\Lambda \backslash \Mod_{\Z_p[x]})$. 
\end{defi}

For example, when $x\in\Z_p$, then we have $\#(\L\backslash \Mod_{\Z_p[x]})=1$. When $x$ generates a quadratic extension, the explicit expression is given in \cite[Proposition 9.10]{shen2026eigenvalues}. In particular, when $x$ is a generator of $\mc{O}_K$, i.e., $\mc{O}_K=\Z_p[x]$, we have $\#(\L\backslash \Mod_{\Z_p[x]})=1$.

\begin{prop}\label{prop: prelimiting correlation function}
Let $K/\Q_p$ be a finite extension of degree $r=ef\le n$, where $e$ is the ramification index and $f$ is the inertia degree. Let $U \subset \mc{O}_{K}^{\new}$ be any measurable set. Then, we have
$$\E[Z_{U,n}]=\int_U\rho^{(n)}_{K}(x)dx,$$
where
\begin{equation}
\rho_{K}^{(n)}(x)=\prod_{i=0}^{r-1}(1-p^{-n+i})\cdot\frac{||\Disc_{K/\Q_p}||}{1-p^{-f}} \cdot \frac{\#(\L\backslash \Mod_{\Z_p[x]})}{\#(\mc{O}_{K}/\Z_p[x])^2} \cdot \E||\det(Z(A))||.
\end{equation}
Here, the integration (and definition of measurability of $U$) is with respect to the Haar probability measure on $\mc{O}_K$, $Z$ is the minimal polynomial of $x$, and $A\in\Mat_{n-r}(\Z_p)$ is distributed by the additive Haar measure.
\end{prop}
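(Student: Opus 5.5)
This proposition is essentially the correlation function formula of \cite[Lemma 1.13]{shen2026eigenvalues} restricted to a single extension $K$. The plan is to rederive it from a fibration of $\Mat_n(\Z_p)$ over eigenvalue data, reducing everything to a lattice count and an orbital-integral computation.

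\textbf{Step 1: set up the counting.} Write $\E[Z_{U,n}]$ as the Haar measure of the incidence set
\[
\{(A,\lambda): A\in\Mat_n(\Z_p),\ \lambda\in U,\ \lambda \text{ an eigenvalue of } A\},
\]
where $\lambda$ is counted through its $\mathrm{Gal}$-orbit. For each $x\in U$ with minimal polynomial $Z$ of degree $r$, an eigenvalue $x$ of $A$ corresponds to a $\Q_p[A]$-stable $r$-dimensional subspace $V\subset\Q_p^n$ on which $A$ acts as multiplication by $x$ on $K$. Then $V\cap\Z_p^n$ is a saturated rank-$r$ $\Z_p[x]$-module, i.e.\ an element of $\Mod_{\Z_p[x]}$ up to $K^\times$-scaling. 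In a basis adapted to $\Z_p^n=(V\cap\Z_p^n)\oplus W$, the matrix $A$ is block upper triangular: the top-left $r\times r$ block is the action of $x$ on a lattice $M$, and the bottom-right block is an arbitrary matrix in $\Mat_{n-r}(\Z_p)$.

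\textbf{Step 2: coarea / change of variables.} Parametrize the relevant matrices by $(x, M, g, B, C)$: $x\in U$, $M$ a lattice class, $g$ a choice of saturated embedding, $B$ the off-diagonal block, and $C\in\Mat_{n-r}(\Z_p)$. Then compute the Jacobian of the map to $\Mat_n(\Z_p)$. The factors arise as follows.
\begin{itemize}
\item The Grassmannian / saturated-sublattice count gives $\prod_{i=0}^{r-1}(1-p^{-n+i})$. This is the density of $r$-tuples in $\Z_p^n$ spanning a saturated sublattice.
\item The Jacobian from $x\mapsto$ the characteristic polynomial of $x$ acting on $M$ gives $\|\Disc_{K/\Q_p}\|$ times $\#(\mc O_K/\Z_p[x])^{-2}$, since the discriminant of $Z$ equals $\Disc_{K}\cdot[\mc O_K:\Z_p[x]]^2$.
\item Summing over lattices $M$ modulo the stabilizer gives $\#(\L\backslash\Mod_{\Z_p[x]})$, with the normalization $1/(1-p^{-f})$ coming from the volume of $\mc O_K^\times$ relative to $\L$ and the residue-field size $p^f$.
\item The requirement that $x$ counts an eigenvalue of the complementary block with the correct multiplicity is where $\E\|\det Z(C)\|$ arises. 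Integrating out $B$ requires solving a Sylvester-type equation, and its Jacobian is $\|\mathrm{Res}(Z,\mathrm{char}\,C)\|=\|\det Z(C)\|$.
\end{itemize}

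\textbf{Step 3: assemble.} Multiply the factors, integrate over $x\in U$ against the Haar probability measure on $\mc O_K$, and check the normalization on the case $K=\Q_p$, $n=1$. In that case the density should be $1$, and with $r=1$, $f=1$ the factors give $(1-p^{-1})\cdot\frac{1}{1-p^{-1}}=1$.

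\textbf{Main obstacle.} The hardest part is the precise orbital-integral computation in Step 2. It requires matching the fibration over lattice classes with the $\L$-orbit count, and getting the discriminant and index powers exactly right. I would follow Yun's lattice interpretation \cite{yun2013orbital}: compute the orbital integral of the companion-type element as a count of lattices, and fix constants by comparing with the case $\Z_p[x]=\mc O_K$ and the unramified case.
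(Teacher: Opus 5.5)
\textbf{Comparison with the paper's proof.} The paper's proof is your first sentence and nothing more. It specializes \cite[Lemma 1.13]{shen2026eigenvalues} to $m=1$, which gives $\rho^{(n)}_K(x)=\prod_{i=0}^{r-1}(1-p^{-n+i})\,\|\Delta_\sigma(x)\|\,V(x)\,\E\|\det Z(A)\|$ with $V(x)=\frac{1}{1-p^{-f}}\|\Delta_\sigma(x)\|\,\#(\L\backslash\Mod_{\Z_p[x]})$. It then rewrites $\|\Delta_\sigma(x)\|^2=\|\Disc_{K/\Q_p}\|\cdot\#(\mc O_K/\Z_p[x])^{-2}$, the discriminant--index identity you also quote.

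Re-deriving Lemma 1.13 is a different and much heavier route, and its skeleton is sound. Parametrize pairs $(A,x)$ by four pieces of data: $x\in U$; a class $[M]\in K^\times\backslash\Mod_{\Z_p[x]}$; a saturated embedding $\iota\colon M\hookrightarrow\Z_p^n$ modulo $\mc O(M)^\times=\{y\in K: yM=M\}$; and the part of $A$ on $\Z_p^n/\iota(M)$. The frame volume is $\prod_{i=0}^{r-1}(1-p^{-n+i})$. Moreover $\sum_{[M]}\mathrm{vol}(\mc O(M)^\times)^{-1}=\#(\L\backslash\Mod_{\Z_p[x]})/(1-p^{-f})$, with volumes taken for Haar probability on $\mc O_K$. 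This holds because the $K^\times/\L\cong\mc O_K^\times$-orbit of $\L M$ has $[\mc O_K^\times:\mc O(M)^\times]$ elements. Done with consistent normalizations, this leaves no constant to calibrate.

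\textbf{Two attributions are wrong as written, and your calibration plan cannot detect the first.}

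(i) The Jacobian of $x\mapsto$ (characteristic polynomial of $x$), from Haar measure on $\mc O_K$ to Haar measure on $\Z_p^r$, is $\|\Disc_{K/\Q_p}\|^{1/2}\|\Delta_\sigma(x)\|=\|\Disc_{K/\Q_p}\|\cdot\#(\mc O_K/\Z_p[x])^{-1}$. That is only one power of the index; it is exactly Caruso's polynomial factor. The full $\|\Disc Z\|=\prod_{i\neq j}\|x_i-x_j\|$, over the conjugates $x_i$ of $x$, comes from somewhere else. It is the Weyl-type Jacobian of $(\delta x,\delta h)\mapsto \delta x+[\delta h,x]$ from $K\oplus\operatorname{End}_{\Q_p}(K)/K$ to $\operatorname{End}_{\Q_p}(K)$, i.e.\ the block of your fibration along $\iota(M)$. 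If you route through the characteristic polynomial instead, you must also compute its non-uniform density, which supplies the second $\#(\mc O_K/\Z_p[x])^{-1}$. Fixing constants on the test case $\Z_p[x]=\mc O_K$ cannot catch this $x$-dependent factor, since the index is $1$ there.

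(ii) The Sylvester operator $Y\mapsto YX-CY$, with $X$ the matrix of $x$ on $M$, does not come from integrating out $B$. Its determinant has norm $\|\operatorname{Res}(Z,\mathrm{char}\,C)\|=\|\det Z(C)\|$, and it is the Jacobian in the direction of $\iota$ transverse to $\iota(M)$, i.e.\ moving the invariant sublattice. The off-diagonal block $B$ is free and contributes a factor $1$.

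With these fixes your route works and explains where each factor comes from. The paper's proof gains brevity by delegating all of this to the cited lemma.
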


\begin{proof}
We apply \cite[Lemma 1.13]{shen2026eigenvalues} in the special case
$m=1$ and $K_1=K$.  In the notation of that paper, the relevant correlation
function is denoted
$$
\rho^{(n)}_{K_1,\ldots,K_m}(x_1,\ldots,x_m).
$$
Thus, in the present one-field case, the correlation function in
\cite[Lemma 1.13]{shen2026eigenvalues} is exactly the function denoted here by
$\rho_K^{(n)}(x)$.

Let $Z=Z_x$ be the minimal polynomial of $x$ over $\Q_p$.
By \cite[Lemma 1.13]{shen2026eigenvalues}, for $r=[K:\Q_p]$ and
$n\ge r$, we have
$$
\rho^{(n)}_K(x)=\prod_{i=0}^{r-1}(1-p^{-n+i})\cdot||\Delta_\sigma(x)||\cdot V(x)\cdot\E||\det(Z(A))||,
$$
where $A\in \Mat_{n-r}(\Z_p)$ is Haar distributed.

It remains to translate the notation in this formula into the notation of
the present paper.  In \cite[Definition 1.6]{shen2026eigenvalues}, the
function $V$ is defined by
$$
V(x)=\frac{1}{1-p^{-r/e}}||\Delta_\sigma(x)||
\#(\Lambda\backslash \Mod_{\Z_p[x]})=\frac{1}{1-p^{-f}}
||\Delta_\sigma(x)||
\#(\Lambda\backslash \Mod_{\Z_p[x]}).
$$
Substituting this into the previous formula gives
$$
\rho^{(n)}_K(x)=\prod_{i=0}^{r-1}(1-p^{-n+i})\frac{||\Delta_\sigma(x)||^2}{1-p^{-f}}\#(\Lambda\backslash \Mod_{\Z_p[x]})\E||\det(Z(A))||.
$$

We now rewrite the factor $||\Delta_\sigma(x)||^2$. In the notation of \cite{shen2026eigenvalues}, $\Delta_\sigma(x)$ is the Vandermonde factor formed from the Galois conjugates of $x$, and the usual discriminant-index formula (see, for instance, page 10 of \cite{caruso2022zeroes}) gives
$$
||\Delta_\sigma(x)||^2=||\Disc_{K/\Q_p}||\cdot\#(\mathcal O_K/\Z_p[x])^{-2}.
$$
Hence
$$
\rho^{(n)}_K(x)=\prod_{i=0}^{r-1}(1-p^{-n+i})\cdot\frac{||\Disc_{K/\Q_p}||}{1-p^{-f}}\cdot\frac{\#(\Lambda\backslash \Mod_{\Z_p[x]})}
{\#(\mathcal O_K/\Z_p[x])^2}
\cdot\E||\det(Z(A))||.
$$
This is exactly the formula claimed in the statement.

Finally, \cite[Lemma 1.13]{shen2026eigenvalues} gives the corresponding integral formula for the one-point correlation function: for every measurable set $U\subset \mathcal O_K^{\new}$,
$$
\E[Z_{U,n}]=\int_U \rho^{(n)}_K(x)dx,
$$
where $dx$ is the additive Haar probability measure on $\mathcal O_K$.
This completes the proof.
\end{proof}

From the correlation function formula in \Cref{prop: prelimiting correlation function}, we see that an upper bound for the correlation function requires an upper bound for the lattice-counting term $\#\bigl(\Lambda\backslash \Mod_{\Z_p[x]}\bigr)$. The following lemma provides the needed estimate.

\begin{lemma}[{\cite[Lemma 6.5]{shen2026eigenvalues}}]\label{lem: estimate of orbital integral}
Let $K/\Q_p$ be a finite extension. Suppose $x\in\mathcal{O}_K^{\new}$ satisfies $\F_p[x\pmod{p}]=\F_{p^d}$, where $x\pmod{p}$ is the image of $x$ in the residue field of $K$. When $\Z_p[x]$ is a proper subset of $\mathcal{O}_K$, we have
$$\frac{\#(\Lambda\backslash\Mod_{\Z_p[x]})}{\#(\mathcal{O}_K/\Z_p[x])^2}\le p^{-d}+2p^{-2d}\le 1.$$
\end{lemma}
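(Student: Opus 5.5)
The plan is to reduce to the lattice-theoretic description of $\Mod_{\Z_p[x]}$ and count orbits directly. Write $R=\Z_p[x]\subsetneq\mathcal O_K$. Every $M\in\Mod_{\Z_p[x]}$ can be rescaled by a power of $\pi$ so that $M\subseteq\mathcal O_K$ and $M\not\subseteq\pi\mathcal O_K$. Each $\Lambda$-orbit then has a representative in the finite set $\mathcal S$ of such normalized $R$-submodules, so $\#(\Lambda\backslash\Mod_{\Z_p[x]})\le\#\mathcal S$.

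To bound $\#\mathcal S$, I would use the conductor-type containment. The index $N:=\#(\mathcal O_K/R)$ is a power of $p$, say $p^{k}$. Since $R$ contains $\Z_p$ and $R\supseteq p^{k}\mathcal O_K$ (the quotient $\mathcal O_K/R$ is killed by its order), each normalized $M$ also satisfies $M\supseteq RM\supseteq\ldots$. More usefully, $\mathcal O_K M$ is an ideal $\pi^j\mathcal O_K$ with $j<e$, by the normalization. So $M$ sits between $\pi^{j}\mathcal O_K$ and a lattice of bounded index determined by the conductor.

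The key structural input is the hypothesis $\F_p[\bar x]=\F_{p^d}$. It shows that the image of $R$ in the residue field $\F_{p^f}$ contains $\F_{p^d}$. Hence every $R$-submodule of $\mathcal O_K$ is a module over the unramified subring $W(\F_{p^d})$ lifted suitably. In particular, each successive layer $M\cap\pi^i\mathcal O_K/M\cap\pi^{i+1}\mathcal O_K$ is an $\F_{p^d}$-subspace, so its cardinality is a power of $p^d$. I would then parametrize $\mathcal S$ by flags of $\F_{p^d}$-subspaces in the graded pieces. Comparing with $N^2=\#(\mathcal O_K/R)^2$, each unit of index (a factor $p^d$) contributes at most a factor $p^{d}$ of choices for the submodule, while the denominator gives $p^{2d}$. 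This yields a geometric series $\sum_{\ell\ge1}c_\ell p^{-d\ell}$.

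The main obstacle is making the count of $\mathcal S$ sharp enough to obtain exactly $p^{-d}+2p^{-2d}$. The count must be organized by the index $\#(\mathcal O_K/M)$ relative to $N$ and must use that $M\supseteq R$-multiples forces $\#(\mathcal O_K/M)\le N$. I would first try the induction on $N$. The case $N=p^d$ is the minimal proper case, where $R$ is a maximal $\F_{p^d}$-stable sublattice. There $\#\mathcal S$ should equal $p^d+2$: the two trivial modules plus the one-dimensional choices. This gives the ratio $p^{-d}+2p^{-2d}$ exactly.

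For larger $N$, I would show $\#\mathcal S\le N^2(p^{-d}+2p^{-2d})$ by bounding the number of normalized $M$ of each index via the number of $\F_{p^d}$-subspaces. Since $N\ge p^{d}$ whenever $R\ne\mathcal O_K$, the ratio only decreases. The final inequality $\le1$ is immediate from $p^d\ge2$.
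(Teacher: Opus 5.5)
There is a genuine gap. The case $N>p^d$ is the real content of the lemma, and you only assert it (``the ratio only decreases''). The method you propose cannot establish it, because nothing in your argument uses that $R=\Z_p[x]$ is monogenic. Every ingredient you invoke holds for \emph{any} order with residue field $\F_{p^d}$:
\begin{itemize}
\item $R$ is local with residue field $\F_{p^d}$, so by Hensel $R\supseteq\Z_p[\zeta_{p^d-1}]$;
\item the graded pieces of $M$ are $\F_{p^d}$-spaces;
\item a normalized $M$ contains $Rm$ for a unit $m$, so $\#(\mathcal O_K/M)\le N$.
\end{itemize}
For such orders in general the inequality is false. Take $K$ unramified of degree $r$ and $R=\Z_p+p\mathcal O_K$, which has residue field $\F_p$ and $N=p^{r-1}$. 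A normalized $R$-lattice satisfies $\mathcal O_KM=\mathcal O_K$, so it contains $(p\mathcal O_K)M=p\mathcal O_K$. Conversely, every $\Z_p$-module between $p\mathcal O_K$ and $\mathcal O_K$ is an $R$-module. Hence $\#\mathcal S$ is the number of nonzero $\F_p$-subspaces of $\F_{p^r}$:
\begin{itemize}
\item for $p=2$, $r=4$ this is $66>64=N^2$;
\item for $r=7$ it exceeds $\binom{7}{3}_p+\binom{7}{4}_p>2p^{12}=2N^2$ for every $p$.
\end{itemize}
So ``one factor $p^d$ of choices per factor $p^d$ of index'' does not follow from subspace counting, since Gaussian binomials grow like $p^{dk(\ell-k)}$. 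Also, flags of graded pieces do not parametrize $\mathcal S$ at all. For $\mathcal O_K=\Z_p[\sqrt p]$ and $R=\Z_p[p^2\sqrt p]$, the $p^2$ distinct lattices $\Z_p(1+c\sqrt p)+p^2\mathcal O_K$ all have the same graded pieces.

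What must be added is an input specific to $\Z_p[x]$. For example, $R\cong\Z_p[X]/(Z)$ is Gorenstein and is not a DVR. By Bass's dichotomy, every $R$-lattice is then either $uR$ with $u\in K^\times$, or a module over $R_1=(R:\mathfrak m_R)=(\mathfrak m_R:\mathfrak m_R)$, where $\#(R_1/R)=p^d$. The principal lattices contribute exactly $[\mathcal O_K^\times:R^\times]=N\frac{1-p^{-f}}{1-p^{-d}}$ orbits. One then still needs a separate estimate for $\#(\Lambda\backslash\Mod_{R_1})$ when $N\ge p^{2d}$. Your proposal contains no step of this kind.

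Your base case $N=p^d$ is salvageable but misstated. There the normalized lattices are $\mathcal O_K$ together with the translates $uR$, so $\#\mathcal S=1+p^d\frac{1-p^{-f}}{1-p^{-d}}$. This equals $p^d+1$ if $f=d$ and $p^d+2$ if $f=2d$; these are the only possibilities, since $p^{f-d}\le N$. Finally, the normalization forces $\mathcal O_KM=\mathcal O_K$, not merely $\mathcal O_KM=\pi^j\mathcal O_K$ with $j<e$.
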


For an integer $r\ge 1$, let $\F_{p^r}$ denote the finite field of order $p^r$, and set
\begin{equation}\label{eq: G_r}
G_r:=\#\{\alpha\in\F_{p^r}:\F_p[\alpha]=\F_{p^r}\}.
\end{equation}
Thus $G_r$ is the size of the subset of elements of $\F_{p^r}$ which generate $\F_{p^r}$ over $\F_p$. This quantity will appear repeatedly when we estimate integrals of correlation functions over unramified extensions. Heuristically, for an unramified extension of degree $r$, the correlation function is largest on those residue classes whose reductions generate
$\F_{p^r}$ over $\F_p$. Therefore, when integrating the correlation function over the unramified extension, the generator residue classes give the main contribution, and the factor $G_r/p^r$ naturally appears. The following elementary formula for $G_r$ is essentially contained in Caruso
\cite[(4.2)]{caruso2022zeroes}.

\begin{lemma}\label{lem: order of generator}
For every integer $r\ge 1$, we have
$$
G_r=\sum_{d|r}\mu\left(\frac{r}{d}\right)p^d.
$$
Here $\mu$ is the M\"obius function.
\end{lemma}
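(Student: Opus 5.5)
The plan is to prove the formula $G_r=\sum_{d\mid r}\mu(r/d)p^d$ of \Cref{lem: order of generator} by Möbius inversion, starting from a partition of $\F_{p^r}$ according to the subfield each element generates. For every $\alpha\in\F_{p^r}$, the ring $\F_p[\alpha]$ is a subfield of $\F_{p^r}$. The subfields of $\F_{p^r}$ are exactly the fields $\F_{p^d}$ for $d\mid r$, and there is exactly one such subfield for each divisor $d$. Hence $\F_{p^r}$ is the disjoint union, over $d\mid r$, of the sets
$$S_d:=\{\alpha\in\F_{p^r}:\F_p[\alpha]=\F_{p^d}\}.$$

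The next step is to identify $\# S_d$ with $G_d$, where $G_d$ is computed inside $\F_{p^d}$ itself. Any $\alpha$ with $\F_p[\alpha]=\F_{p^d}$ lies in the unique subfield $\F_{p^d}\subset\F_{p^r}$ and generates it. Conversely, a generator of that subfield is an element of $S_d$. So $\#S_d=G_d$, and counting the disjoint union gives
$$p^r=\sum_{d\mid r}G_d\qquad\text{for every } r\ge 1.$$

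Finally, apply Möbius inversion to the pair of arithmetic functions $g(r)=G_r$ and $h(r)=p^r$, which satisfy $h=\sum_{d\mid r}g(d)$. Inversion yields $G_r=\sum_{d\mid r}\mu(r/d)p^d$, which is the claimed formula.

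None of these steps is a real obstacle. The only point that needs care is the uniqueness of the subfield of each order $p^d$ inside $\F_{p^r}$: it is the fixed field of $x\mapsto x^{p^d}$, equivalently the set of roots of $x^{p^d}-x$. This uniqueness is what makes the sets $S_d$ partition $\F_{p^r}$ without overlap.
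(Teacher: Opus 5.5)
Your argument is correct: partitioning $\F_{p^r}$ by the subfield $\F_p[\alpha]$ gives $p^r=\sum_{d\mid r}G_d$, and M\"obius inversion yields the formula. This is the standard proof, and it matches the paper in substance, since the paper writes out no proof and simply attributes the formula to Caruso's (4.2), which rests on the same classical count (equivalently, $G_r$ is $r$ times the number of monic irreducible polynomials of degree $r$ over $\F_p$).
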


Based on this generator count and the correlation function formula recalled in
\Cref{prop: prelimiting correlation function}, one obtains the following estimate for the limiting expected number of eigenvalues generating a fixed
finite extension. In the present paper, this result will be used to control the fixed-degree contribution from unramified extensions.

\begin{lemma}[{\cite[Theorem 1.10]{shen2026eigenvalues}}]\label{lem: arbitrary extension limit}
Let $K/\Q_p$ be any finite extension, and $p^f$ the order of its residue field. Then $\lim_{n \to \infty} \E[Z_{\mc{O}_K^{\new},n}]$ exists, is finite, and satisfies the estimate
     \begin{equation}
        \lim_{n \to \infty} \E[Z_{\mathcal{O}_K^{\new},n}] = ||\Disc_{K/\Q_p}|| \cdot \sum_{d|f} \mu\left(\frac{f}{d}\right)p^{d-f} + \mathcal{E}(K).
    \end{equation}
    Here $\mu$ is the M\"obius function, and the error term $\mathcal{E}(K)$ satisfies the bounds
    \begin{equation}
        -p^{-f}||\Disc_{K/\Q_p}|| < \mathcal{E}(K) < \frac{1+\tau(f)}{1-p^{-f}}p^{-f}||\Disc_{K/\Q_p}||
    \end{equation}
    where $\tau$ is the divisor function. 
\end{lemma}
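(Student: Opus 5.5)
The plan is to integrate the formula of \Cref{prop: prelimiting correlation function} over $\mathcal O_K^{\new}$ and pass to the limit $n\to\infty$ inside the integral.

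\textbf{Step 1: limit of the integrand.} The product $\prod_{i=0}^{r-1}(1-p^{-n+i})$ tends to $1$. The lattice factor $\#(\Lambda\backslash\Mod_{\Z_p[x]})/\#(\mathcal O_K/\Z_p[x])^2$ does not depend on $n$. It remains to treat $\E\|\det Z(A)\|$ with $A\in\Mat_{n-r}(\Z_p)$ Haar.

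Let $x\in\mathcal O_K^{\new}$ have reduction generating $\F_{p^d}$, where $d\mid f$. Then $Z\equiv g^{r/d}\pmod p$ for an irreducible $g\in\F_p[t]$ of degree $d$. So $\|\det Z(A)\|$ depends only on the $g$-primary part of $A$, namely the structure of $\mathrm{coker}\,Z(A)$ localized at $g$. I would show that this quantity is dominated by $1$ and converges as $n\to\infty$, using the Cohen--Lenstra-type limit of the $g$-part of a Haar matrix.

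I expect the explicit identification of the limit to be the main obstacle. The first attempt is the generic case $d=f$ with $\Z_p[x]=\mathcal O_K$. There the target identity is
\[
\lim_{n\to\infty}\E\|\det Z(A)\|=1-p^{-f},
\]
which would cancel the factor $(1-p^{-f})^{-1}$. For $d<f$, or for $\Z_p[x]\subsetneq\mathcal O_K$, I would only need the crude bounds $0\le\E\|\det Z(A)\|\le1$. Dominated convergence then gives existence and finiteness of the limit, since the integrand is bounded by $\|\Disc_{K/\Q_p}\|/(1-p^{-f})$ times the lattice factor. That lattice factor is at most $1$, by \Cref{lem: estimate of orbital integral} or trivially when $\Z_p[x]=\mathcal O_K$.

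\textbf{Step 2: main term.} Partition $\mathcal O_K^{\new}$ according to the field $\F_{p^d}$ generated by the reduction modulo $\pi$. Counting residue classes with \Cref{lem: order of generator}, the set where $d=f$ has Haar measure $G_f p^{-f}=\sum_{d\mid f}\mu(f/d)p^{d-f}$. On this set, when $\Z_p[x]=\mathcal O_K$, the limiting density is exactly $\|\Disc_{K/\Q_p}\|$ by Step 1. Integrating gives the main term $\|\Disc_{K/\Q_p}\|\sum_{d\mid f}\mu(f/d)p^{d-f}$.

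\textbf{Step 3: the error $\mathcal E(K)$.} Everything else is put into $\mathcal E(K)$: the deviation on the generator set where $\Z_p[x]\ne\mathcal O_K$ (for ramified $K$ this is where it can happen), and the contribution of the non-generator classes, $d<f$.

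\emph{Upper bound.} For each proper divisor $d$ of $f$, the $d$-classes have total measure at most $p^{d-f}\le p^{-f}\cdot p^{d}$. Their density is at most $\|\Disc_{K/\Q_p}\|(1-p^{-f})^{-1}(p^{-d}+2p^{-2d})$ by \Cref{lem: estimate of orbital integral}. Each divisor therefore contributes at most $\|\Disc_{K/\Q_p}\|\,p^{-f}(1+2p^{-d})/(1-p^{-f})$, up to bookkeeping of constants. The overshoot on the generator set is handled the same way. Summing over the $\tau(f)$ divisors, plus one extra term from the generator set, yields the upper bound $\frac{1+\tau(f)}{1-p^{-f}}p^{-f}\|\Disc_{K/\Q_p}\|$.

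\emph{Lower bound.} The integrand is nonnegative, so the only possible loss relative to the main term is on the generator set. There the limiting $\E\|\det Z(A)\|$ is at least the probability that $\det Z(A)$ is a unit, namely $\prod_{k\ge1}(1-p^{-fk})$. Comparing this with $1-p^{-f}$ shows the loss is strictly less than $p^{-f}\|\Disc_{K/\Q_p}\|$, which gives the strict lower bound on $\mathcal E(K)$.
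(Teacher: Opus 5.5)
The paper does not prove this lemma; it quotes it from \cite[Theorem 1.10]{shen2026eigenvalues}. Your strategy is the natural one: integrate the density of \Cref{prop: prelimiting correlation function}, pass to the limit by dominated convergence, split $\mathcal O_K^{\new}$ by the degree $d$ of $\F_p[\bar x]$, and apply \Cref{lem: estimate of orbital integral} away from the monogenic set. The upper bound goes through with careful bookkeeping.

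\textbf{The gap.} It sits exactly at the step you flag. The identity $\lim_n\E\|\det Z(A)\|=1-p^{-f}$ for $\Z_p[x]=\mathcal O_K$ is never proved, and your lower bound fails without it. In Step~3 you put the entire loss on the determinant factor over the generator set and bound that factor below by $\prod_{k\ge1}(1-p^{-fk})$. This suffices for unramified $K$, but not when $e\ge2$.

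Take a residue class $[\alpha]+\pi\mathcal O_K$ with $\F_p[\alpha]=\F_{p^f}$. Its points with $x-[\alpha]\in\pi^2\mathcal O_K$ satisfy $v_K(g(x))\ge2$, so they are not monogenic.
\begin{itemize}
\item Over all such classes they have total measure $G_fp^{-2f}$.
\item The main term $\|\Disc_{K/\Q_p}\|G_fp^{-f}$ assigns them density $\|\Disc_{K/\Q_p}\|$.
\item Their true density is only $O(p^{-f}\|\Disc_{K/\Q_p}\|)$.
\end{itemize}
For $f=1$ this loss alone is $p^{-1}\|\Disc_{K/\Q_p}\|$ minus a term of order $p^{-2}\|\Disc_{K/\Q_p}\|$. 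The remaining margin is therefore of the same order as the deficit your crude determinant bound creates on the monogenic part, and that deficit does not fit.

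Concretely, take $K=\Q_3(\sqrt3)$. Credit every point with $\E\|\det Z(A)\|\ge\prod_{k\ge1}(1-3^{-k})\approx0.560$, and use the exact counts $\#(\Lambda\backslash\Mod_{\Z_3[x]})=(3^{j+1}-1)/2$ at index $3^j$. The resulting estimate is only about $0.654\,\|\Disc_{K/\Q_3}\|$, below the required $\tfrac23\|\Disc_{K/\Q_3}\|$.

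\textbf{How to close it.} You need to prove the identity. One route: if $\Z_p[x]=\mathcal O_K$, then $R=\Z_p[t]/(Z)\cong\mathcal O_K$ is a DVR with residue field $\F_{p^f}$.
\begin{enumerate}
\item With $t$ acting by $A\in\Mat_N(\Z_p)$, $N=n-r$, the cokernel $\operatorname{coker}Z(A)$ is an $R$-module, and $\|\det Z(A)\|=\#\operatorname{coker}Z(A)^{-1}$.
\item For a finite $R$-module $G$, the $R$-surjections $\operatorname{coker}Z(A)\to G$ are exactly the $\Z_p$-surjections $F:\Z_p^N\to G$ with $FA=tF$.
\item For each fixed surjective $F$, the map $FA$ is uniform on $\operatorname{Hom}(\Z_p^N,G)$. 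Hence $\E\#\operatorname{Sur}_R(\operatorname{coker}Z(A),G)=\#\operatorname{Sur}(\Z_p^N,G)/|G|^N\to1$.
\item These are the moments of the Cohen--Lenstra distribution over $R$, and they determine it. So $\E\|\det Z(A)\|\to\lim_N\prod_{i=1}^N\frac{1-q^{-i}}{1-q^{-i-1}}=1-q^{-1}$ with $q=p^f$.
\end{enumerate}

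With this in hand, the monogenic set contributes exactly $\|\Disc_{K/\Q_p}\|$ times its measure. The only shortfall relative to the main term is then $\|\Disc_{K/\Q_p}\|G_fp^{-2f}\le p^{-f}\|\Disc_{K/\Q_p}\|$ (zero when $e=1$), reduced by the strictly positive contribution of the non-monogenic points. This gives the strict lower bound on $\mathcal E(K)$.

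Existence of the limit at non-monogenic $x$ still needs the convergence you sketch, for example via the explicit limiting law of the $g$-primary part of $A$.
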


\begin{lemma}\label{lem: unramified subextension}
Let $K/\Q_p$ be a finite extension with inertia degree $f$ and ramification index $e$. Let $\zeta_{p^f-1}$ be a primitive $(p^f-1)$-st root of unity, so that $\Q_p[\zeta_{p^f-1}]$ is the unique unramified extension over $\Q_p$ of degree $f$. Then, we have
$$\Q_p\subset\Q_p[\zeta_{p^f-1}]\subset K,$$
where the extension $K/\Q_p[\zeta_{p^f-1}]$ is totally ramified of degree $e$.
\end{lemma}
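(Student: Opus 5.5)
The plan is to use the standard structure theory of finite extensions of local fields: Hensel's lemma for the roots of unity of order prime to $p$, together with multiplicativity of the ramification index and inertia degree in towers.

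\textbf{Step 1: the root of unity lies in $K$.} The residue field of $K$ is $\F_{p^f}$. Its multiplicative group is cyclic of order $p^f-1$, so $X^{p^f-1}-1$ splits into distinct linear factors over $\F_{p^f}$. Since $p\nmid p^f-1$, this polynomial is separable mod $p$. By Hensel's lemma applied in $\mathcal O_K$, each simple root lifts to a root in $\mathcal O_K$. Hence $K$ contains all $(p^f-1)$-st roots of unity, in particular $\zeta_{p^f-1}$.

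\textbf{Step 2: $L:=\Q_p[\zeta_{p^f-1}]$ is the unramified extension of degree $f$.} Let $g$ be the minimal polynomial of $\zeta:=\zeta_{p^f-1}$ over $\Q_p$. It divides $X^{p^f-1}-1$, so its reduction $\bar g$ is separable. A factorization of $\bar g$ into coprime pieces would lift, by Hensel's lemma, to a factorization of $g$ over $\Z_p$. Since $g$ is irreducible, $\bar g$ must be a power of an irreducible, and separability forces $\bar g$ itself to be irreducible.

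The reduction $\bar\zeta$ is a primitive $(p^f-1)$-st root of unity. Indeed, reduction is injective on the roots of $X^{p^f-1}-1$, again by separability. So $\F_p[\bar\zeta]=\F_{p^f}$, and therefore
$$
[L:\Q_p]=\deg g=\deg\bar g=f.
$$
The residue field of $L$ contains $\F_{p^f}$, so the inertia degree $f_L\ge f=[L:\Q_p]$. Combined with $e_Lf_L=[L:\Q_p]$, this gives $f_L=f$ and $e_L=1$, i.e.\ $L$ is unramified. Uniqueness of the unramified extension of degree $f$ inside $\bar\Q_p$ is standard (Neukirch): it is the splitting field of $X^{p^f}-X$. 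Alternatively, the statement only needs this identification as notation.

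\textbf{Step 3: $K/L$ is totally ramified of degree $e$.} Ramification indices and inertia degrees are multiplicative in towers. The residue field of $L$ is already $\F_{p^f}$, which equals that of $K$, so $f(K/L)=1$. Hence
$$
[K:L]=\frac{ef}{f}=e=e(K/L),
$$
and the extension $K/L$ is totally ramified of degree $e$.

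\textbf{Main obstacle.} The only nontrivial point is the irreducibility of $\bar g$ (equivalently, $[L:\Q_p]=f$) in Step 2. An alternative route avoids it: the residue field of $L$ contains $\F_{p^f}$, so $f_L\ge f$. Also $L\subset K$ and the residue field of $K$ is $\F_{p^f}$, so $f_L\le f$, giving $f_L=f$. The inequality $[L:\Q_p]\le f$ then follows from $\deg g\le\deg\bar g$ and $\bar g\mid \prod_{\sigma}(X-\bar\zeta^{p^\sigma})$, the latter having degree $f$. Together with $[L:\Q_p]=e_Lf_L\ge f$, this forces $[L:\Q_p]=f$ and $e_L=1$.
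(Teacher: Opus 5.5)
Your proof is correct. The paper gives no argument of its own and only cites Neukirch (Proposition (7.5), Chapter II). You instead reconstruct the standard argument directly:
\begin{itemize}
\item Hensel lifting puts all $(p^f-1)$-st roots of unity in $\mathcal O_K$.
\item Hensel's factorization lemma, together with separability of $X^{p^f-1}-1$ modulo $p$, makes $\bar g$ irreducible. This gives $[\Q_p[\zeta_{p^f-1}]:\Q_p]=f$, with inertia degree $f$.
\item Multiplicativity of $e$ and $f$ in the tower then shows $K/\Q_p[\zeta_{p^f-1}]$ is totally ramified of degree $e$.
\end{itemize}
The citation is shorter and also covers the uniqueness claim built into the statement. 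Your version is self-contained and isolates exactly what is used: Hensel's lemma, $[K:\Q_p]=ef$, and multiplicativity in towers.

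You do not need Neukirch for uniqueness either. If $L'$ is any unramified extension of degree $f$, its residue field is $\F_{p^f}$. Your Step 1 applied to $L'$ then gives $\zeta_{p^f-1}\in L'$, hence $\Q_p[\zeta_{p^f-1}]\subset L'$, and equal degrees force equality.

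One small remark on your ``alternative route'': the divisibility $\bar g\mid\prod_{i=0}^{f-1}(X-\bar\zeta^{p^i})$ is stated without justification. It is true, for the following reason. The roots of $\bar g$ are the reductions of the $\Q_p$-conjugates of $\zeta$. Each $\Q_p$-embedding preserves the absolute value, so it induces an $\F_p$-embedding of residue fields, which sends $\bar\zeta$ to a Frobenius conjugate. Since $\bar g$ is separable, the divisibility follows. You would need to supply this argument if you used that route. Your Step 2 is already complete without it, so the proof is unaffected.
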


\begin{proof}
See Neukirch's textbook \cite[Proposition (7.5), Chapter II]{neukirch2013algebraic}.
\end{proof}

We will also use the following mass formula for finite extensions of
$\Q_p$, which allows us to sum the discriminant weights over extensions with fixed inertia degree and ramification index. This lemma is essentially due to Caruso \cite{caruso2022zeroes}; more precisely, it is a direct corollary of Caruso's formulation of Serre's mass formula.

\begin{lemma}\label{lem: serre mass}
Let $e,f\ge 1$ be positive integers. Let
$\zeta_{p^f-1}$ be a primitive $(p^f-1)$-st root of unity, so that
$\Q_p[\zeta_{p^f-1}]$ is the unique unramified extension of $\Q_p$ of degree $f$. Then
$$
\sum_K ||\Disc_{K/\Q_p}||=\frac{e}{p^{(e-1)f}},
$$
where the sum ranges over all totally ramified extensions $K/\Q_p[\zeta_{p^f-1}]$ of degree $e$, viewed as subfields of $\overline{\Q}_p$.
\end{lemma}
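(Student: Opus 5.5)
The plan is to deduce the identity from Serre's mass formula for the base field $L:=\Q_p[\zeta_{p^f-1}]$, in the form recorded by Caruso \cite{caruso2022zeroes}. The one place needing care is the difference between summing over isomorphism classes and summing over subfields of $\overline{\Q}_p$. The proof has three steps, taken in this order.

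\emph{Step 1: Serre's formula for subfields.} Let $q=p^f$, the order of the residue field of $L$. For a totally ramified extension $K/L$ of degree $e$, write $d(K)$ for the valuation (in $L$) of the relative discriminant $\Disc_{K/L}$. Serre's mass formula over isomorphism classes reads
$$
\sum_{K/\cong}\frac{1}{\#\mathrm{Aut}(K/L)}\,q^{-(d(K)-e+1)}=1.
$$
An isomorphism class of $K$ has exactly $e/\#\mathrm{Aut}(K/L)$ distinct $L$-embeddings into $\overline{\Q}_p$ up to equality of images, so it corresponds to that many subfields. Moreover, $d(K)$ depends only on the isomorphism class. Summing instead over subfields $K\subset\overline{\Q}_p$ therefore gives
$$
\sum_{K}q^{-(d(K)-e+1)}=e,
\qquad\text{that is,}\qquad
\sum_K q^{-d(K)}=e\,q^{-(e-1)}.
$$
This is essentially Caruso's formulation, so I would cite it directly; the only thing to check is that his sum is over subfields of $\overline{\Q}_p$ (or to insert the orbit-counting factor above if it is not).

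\emph{Step 2: transitivity of discriminants.} For the tower $\Q_p\subset L\subset K$ we have
$$
\Disc_{K/\Q_p}=N_{L/\Q_p}(\Disc_{K/L})\cdot \Disc_{L/\Q_p}^{[K:L]}.
$$
Since $L/\Q_p$ is unramified, $\Disc_{L/\Q_p}$ is a unit. Also $p$ is a uniformizer of $L$, so $\Disc_{K/L}=(p^{d(K)})$, and its norm is $p^{f d(K)}$. Hence
$$
\|\Disc_{K/\Q_p}\|=p^{-f d(K)}=q^{-d(K)}.
$$

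\emph{Step 3: combine.} Substituting Step 2 into Step 1 gives
$$
\sum_K\|\Disc_{K/\Q_p}\|=e\,p^{-(e-1)f},
$$
which is the claimed identity.

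I expect the main obstacle to be only the bookkeeping in Step 1: matching the normalization of Serre's formula (the exponent $d-e+1$ and the $1/\#\mathrm{Aut}$ weights) to a sum over subfields of $\overline{\Q}_p$, so that the factor $e$ appears correctly. The remaining steps are standard.
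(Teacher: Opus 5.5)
Your argument is correct, but it takes a more first-principles route than the paper. The paper does not go through the base field $L=\Q_p[\zeta_{p^f-1}]$. Instead it cites Caruso's Proposition 4.9 over the base $\Q_p$, which already sums $\|\Disc_{K/\Q_p}\|$ over embedded extensions of degree $r$ and residual degree $f$ and evaluates the sum as $\frac{r}{p^r}\cdot\frac{p^f}{f}$. It then sets $r=ef$ and uses the unramified-subextension lemma to identify that summation set with the totally ramified degree-$e$ extensions of $\Q_p[\zeta_{p^f-1}]$.

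You instead apply Serre's mass formula over $L$ itself. You pass from the $1/\#\mathrm{Aut}(K/L)$-weighted sum over isomorphism classes to a sum over subfields of $\overline{\Q}_p$; your count of $e/\#\mathrm{Aut}(K/L)$ distinct images per class is right. You then descend to $\Q_p$ via transitivity of discriminants, $\|\Disc_{K/\Q_p}\|=p^{-f d(K)}=q^{-d(K)}$. In effect you reprove the special case of Caruso's proposition that is needed.

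Your version has two advantages. Its summation set is literally the one in the statement, so no identification step is required, and it rests only on Serre's classical formula plus the tower formula. The paper's version is shorter because Caruso has already handled the descent from $L$ to $\Q_p$ and the bookkeeping between embedded extensions and isomorphism classes.
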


\begin{proof}
We apply \cite[Proposition 4.9]{caruso2022zeroes} with Caruso's base field
$F$ equal to our $\Q_p$. Then the residue-field cardinality in Caruso's notation is $q=p$. His proposition states that, for positive integers $r$ and $f$
with $f\mid r$,
$$
\sum_{K\in \mathrm{Ex}_{r,f}} ||\Disc_{K/\Q_p}||
=
\frac{r}{p^r}\cdot \frac{p^f}{f},
$$
where $\mathrm{Ex}_{r,f}$ denotes the set of embedded extensions
$K/\Q_p$ of degree $r$ and residual degree $f$.

In our situation, we take $r=ef$. An extension $K/\Q_p$ of degree $ef$ and residual degree $f$ has ramification index $e$. Equivalently, since $\Q_p[\zeta_{p^f-1}]$ is the unique unramified extension of
$\Q_p$ of degree $f$, and by \Cref{lem: unramified subextension} such an extension $K$ is precisely a totally ramified extension of $\Q_p[\zeta_{p^f-1}]$ of degree $e$. Thus the summation set in Caruso's proposition is exactly the
summation set in the statement of the lemma.

Substituting $r=ef$ into \cite[Proposition 4.9]{caruso2022zeroes} gives
$$
\sum_K ||\Disc_{K/\Q_p}||=\frac{ef}{p^{ef}}\cdot \frac{p^f}{f}=e p^{-(e-1)f}=
\frac{e}{p^{(e-1)f}}.
$$
This proves the lemma.
\end{proof}

\include*{Estimate_of_correlation_functions}

\section{The random matrix case}\label{sec: Proof}

In this section, we prove \Cref{thm: zeros degree under rn} and \Cref{thm: outside un} stated in the introduction.  We first
establish the large-extension estimates needed to control individual finite
extensions, then use them to prove the unramified result and the extension-degree
asymptotic.

\begin{prop}[Large extensions]\label{prop: large extension}
Let $K/\Q_p$ be a finite extension of degree $r=ef\le n$, where $e$ is the ramification index and $f$ is the inertia degree. Then, we have the upper bound
\begin{equation}\label{eq: large extension upper bound}
\E[Z_{\mc{O}_K^{\new},n}]\le\frac{||\Disc_{K/\Q_p}||}{1-p^{-f}}\prod_{i=0}^{r-1}(1-p^{-n+i}).
\end{equation}
Moreover, if $K/\Q_p$ is unramified, we have the lower bound
\begin{equation}\label{eq: large extension lower bound}
\E[Z_{\mc{O}_K^{\new},n}]\ge\frac{1-1000p^{-r/2}}{1-p^{-r}}\prod_{i=0}^{r-1}(1-p^{-n+i}).
\end{equation}
\end{prop}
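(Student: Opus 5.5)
The plan is to integrate the density $\rho_K^{(n)}$ of \Cref{prop: prelimiting correlation function} over $\mc{O}_K^{\new}$, bounding each factor crudely from above. For the lower bound I will instead restrict the integral to a large, explicit subset on which every factor can be computed or bounded from below.

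\emph{Upper bound.} Take $U=\mc{O}_K^{\new}$ in \Cref{prop: prelimiting correlation function}. The factors are handled as follows.
\begin{enumerate}
\item Since $A$ has entries in $\Z_p$ and $Z$ is monic with $\Z_p$-coefficients ($x$ is integral), $\det(Z(A))\in\Z_p$. Hence $\E||\det(Z(A))||\le 1$.
\item The lattice factor $\#(\L\backslash\Mod_{\Z_p[x]})/\#(\mc{O}_K/\Z_p[x])^2$ equals $1$ when $\Z_p[x]=\mc{O}_K$, as noted after \Cref{defi: Den and distance}. Otherwise it is $\le 1$ by \Cref{lem: estimate of orbital integral}.
\item $\mc{O}_K^{\new}$ has Haar probability measure at most $1$.
\end{enumerate}
Multiplying these gives \eqref{eq: large extension upper bound} directly.

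\emph{Lower bound.} Now let $K$ be unramified of degree $r$, so $f=r$ and $||\Disc_{K/\Q_p}||=1$. Since $\rho_K^{(n)}\ge 0$, I restrict the integral to
\[
U_0=\{x\in\mc{O}_K:\ \F_p[x \bmod p]=\F_{p^r}\}.
\]
This set has measure $G_r/p^r$ and lies in $\mc{O}_K^{\new}$, because $x$ in a proper subfield would have reduction in a proper subfield of $\F_{p^r}$.

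For $x\in U_0$, the image of $\Z_p[x]$ in $\mc{O}_K/p\mc{O}_K=\F_{p^r}$ is everything, so Nakayama gives $\Z_p[x]=\mc{O}_K$ and the lattice factor is exactly $1$. The reduction $\bar Z$ of the minimal polynomial is the minimal polynomial of a generator of $\F_{p^r}$, hence irreducible of degree $r$ over $\F_p$. Consequently $||\det Z(A)||=1$ unless $\ker\bar Z(\bar A)\neq 0$, where $\bar A$ is uniform in $\Mat_m(\F_p)$ with $m=n-r$. This gives
\[
\E||\det Z(A)||\ \ge\ 1-\Pr\bigl(\ker\bar Z(\bar A)\ne0\bigr).
\]

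The main step is bounding this probability, which I would do by a first-moment argument.
\begin{itemize}
\item The kernel of $\bar Z(\bar A)$ is a module over the field $\F_p[t]/(\bar Z)\cong\F_{p^r}$. So if it is nonzero, it contains at least $p^r-1$ nonzero vectors.
\item Fix $v\ne0$. Since $\bar Z$ is irreducible, $\bar Z(\bar A)v=0$ forces $v,\bar Av,\dots,\bar A^{r-1}v$ to be linearly independent and $\bar A^rv$ to equal a prescribed combination of them.
\item Revealing $\bar A$ on successive independent Krylov vectors, each new image is uniform in $\F_p^m$. Hence the probability of this event is at most $p^{-m}$.
\item Summing over the $p^m-1$ vectors $v$ gives $\E\#\{v\ne0:\bar Z(\bar A)v=0\}\le 1$, so by Markov $\Pr(\ker\bar Z(\bar A)\neq 0)\le 1/(p^r-1)\le 2p^{-r}$.
\end{itemize}

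It remains to bound the measure of $U_0$ from below. By \Cref{lem: order of generator},
\[
G_r/p^r\ \ge\ 1-\sum_{d\le r/2}p^{d-r}\ \ge\ 1-2p^{-r/2}.
\]
Combining everything,
\[
\E[Z_{\mc{O}_K^{\new},n}]\ \ge\ \frac{(1-2p^{-r/2})(1-2p^{-r})}{1-p^{-r}}\prod_{i=0}^{r-1}(1-p^{-n+i}),
\]
and $(1-2p^{-r/2})(1-2p^{-r})\ge 1-4p^{-r/2}\ge 1-1000p^{-r/2}$, which proves \eqref{eq: large extension lower bound}.

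I expect the only delicate point to be the uniform (in $r$ and $m$) bound on $\Pr(\ker\bar Z(\bar A)\ne0)$; the Krylov first-moment argument above is what I would try first. Note that when $m<r$ the bound is trivial, since then $\ker\bar Z(\bar A)=0$ automatically: a nonzero kernel would have $\F_p$-dimension at least $r>m$.
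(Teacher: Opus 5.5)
Your proposal is correct and has the same structure as the paper's proof: the upper bound uses the same crude factor bounds, and the lower bound restricts to the same generator set, where the lattice factor is $1$ and $\bar Z$ is irreducible, and bounds its measure via the M\"obius formula for $G_r$ (with your explicit constant $4$ in place of $1000$). The only local difference is the determinant estimate: you use a first-moment count of kernel vectors together with the fact that $\ker\bar Z(\bar A)$ is an $\F_{p^r}$-vector space, while the paper union-bounds over pairs $(W,\bar A)$ with $W$ an $r$-dimensional $\bar A$-invariant subspace on which $\bar A$ has characteristic polynomial $\bar Z$; both arguments give the same bound $1/(p^r-1)$.
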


The following lemma provides the determinant estimate needed for the lower
bound in \Cref{prop: large extension}.

\begin{lemma}\label{lem: determinant lower bound}
Suppose $Z\in\Z_p[x]$ is monic of degree $r$, and its residue $\bar Z\in\F_p[x]$ is irreducible. Let $A\in\Mat_n(\Z_p)$ be Haar distributed. Then, we have
$$\E||\det(Z(A))||\ge 1-\frac{1}{p^r-1}.$$
\end{lemma}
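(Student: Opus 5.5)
The plan is to reduce everything modulo $p$ and then bound the probability of a "bad" residue matrix by a first-moment count. Since $\det(Z(A))\in\Z_p$, we have $||\det(Z(A))||\ge 0$ always, and $||\det(Z(A))||=1$ exactly when $\det(Z(A))$ is a unit. Equivalently, $\bar Z(\bar A)\in\Mat_n(\F_p)$ is invertible, where $\bar A$ is the reduction of $A$ modulo $p$. Hence
$$\E||\det(Z(A))||\ge \Pr\bigl(\bar Z(\bar A)\in\GL_n(\F_p)\bigr),$$
and $\bar A$ is uniformly distributed on $\Mat_n(\F_p)$. It therefore suffices to show
$$\Pr\bigl(\ker \bar Z(\bar A)\neq 0\bigr)\le \frac{1}{p^r-1}.$$

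First, we use the structure of the kernel. Let $W=\ker\bar Z(\bar A)\subset\F_p^n$. It is stable under $\bar A$, so it is a module over $\F_p[x]/(\bar Z)$. Because $\bar Z$ is irreducible of degree $r$, this ring is the field $\F_{p^r}$, and $W$ is an $\F_{p^r}$-vector space. So if $W\neq 0$, then $\#(W\setminus\{0\})\ge p^r-1$. Markov's inequality then gives
$$\Pr(W\neq 0)\le \frac{\E\,\#(W\setminus\{0\})}{p^r-1}=\frac{1}{p^r-1}\sum_{v\neq 0}\Pr\bigl(\bar Z(\bar A)v=0\bigr).$$

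Second, we show $\Pr(\bar Z(\bar A)v=0)\le p^{-n}$ for each fixed $v\neq 0$. Set $v_i=\bar A^iv$. If $\bar Z(\bar A)v=0$, then the cyclic module generated by $v$ is a nonzero quotient of the field $\F_p[x]/(\bar Z)$. It is therefore isomorphic to that field, so $v_0,\dots,v_{r-1}$ are linearly independent. The event is thus contained in the event that $v_0,\dots,v_{r-1}$ are independent and $\bar A v_{r-1}$ equals the specific vector $-\sum_{i<r}c_iv_i$, where $\bar Z=x^r+\sum_{i<r} c_i x^i$.

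We reveal the images $\bar A v_0,\bar A v_1,\dots$ one at a time. For a uniform matrix, the images of linearly independent vectors are independent and uniform in $\F_p^n$. Conditional on $v_0,\dots,v_{r-1}$ being independent, the vector $\bar A v_{r-1}$ is therefore uniform, so the last condition has probability $p^{-n}$. (If $r>n$ the independence condition is impossible, so the probability is $0$.) Summing over the $p^n-1$ nonzero vectors $v$ gives
$$\E\,\#(W\setminus\{0\})\le (p^n-1)p^{-n}<1,$$
and hence $\E||\det(Z(A))||\ge 1-\frac{1}{p^r-1}$.

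The step that needs the most care is the conditioning argument in the second step. The vectors $v_i$ are themselves built from $\bar A$, so the claim that $\bar A v_{r-1}$ is uniform must be justified by exposing $\bar A$ sequentially on the flag spanned by $v_0,v_1,\dots$, rather than by treating $v_{r-1}$ as fixed in advance.
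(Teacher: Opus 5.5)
Your proof is correct and is essentially the paper's argument. Both reduce modulo $p$ and apply a first-moment bound, using that a nonzero $v$ with $\bar Z(\bar A)v=0$ spans an $r$-dimensional cyclic subspace isomorphic to $\F_p[x]/(\bar Z)$; the only difference is bookkeeping. The paper union-bounds over pairs $(W,\bar A)$ with $W$ such an invariant subspace, getting $1/(p^r-1)$ from the centralizer of the companion matrix and a Gaussian-binomial count, while you count kernel vectors and get $1/(p^r-1)$ from the $\F_{p^r}$-structure of $\ker\bar Z(\bar A)$. Since each such $W$ contains exactly $p^r-1$ nonzero vectors, your sequential-exposure computation, if kept exact, reproduces the paper's bound $\prod_{i=0}^{r-1}(1-p^{i-n})/(p^r-1)$.
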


\begin{proof}
It is enough to prove a lower bound for the probability that
$\det(Z(A))$ is a $p$-adic unit. Indeed, since $Z(A)$ has entries in
$\Z_p$, we have
$$
||\det(Z(A))||=1
$$
whenever $\det(Z(A))\in \Z_p^\times$, and otherwise
$||\det(Z(A))||\ge 0$. Hence
$$
\E||\det(Z(A))||\ge\mathbf P\bigl(\det(Z(A))\in\Z_p^\times\bigr)=\mathbf P\bigl(\det(\bar Z(\bar A))\neq 0\bigr),
$$
where $\bar A$ is uniformly sampled from $\Mat_n(\F_p)$. We now bound the complementary probability. Suppose $\det(\bar Z(\bar A))=0$. Then there exists a nonzero vector $v\in \F_p^n$ such that
$$
\bar Z(\bar A)v=0.
$$
Since $\overline Z$ is irreducible and $\overline Z(\overline A)v=0$, the cyclic $\F_p[\overline A]$-submodule generated by $v$ is naturally a quotient of $\F_p[x]/(\overline Z)$. Since $v\ne 0$ and
$\F_p[x]/(\overline Z)$ is a field, this quotient is nonzero and hence is isomorphic to $\F_p[x]/(\overline Z)$. Therefore
$$
W:=\operatorname{span}_{\F_p}\{v,\overline A v,\ldots,\overline A^{r-1}v\}
$$
has dimension $r$, is $\overline A$-invariant, and the characteristic polynomial of $\overline A|_W$ is $\overline Z$. Therefore the number of matrices $\bar A\in\Mat_n(\F_p)$ such that $\det(\bar Z(\bar A))=0$ is at most the number of pairs $(W,\bar A)$ such that $W$ is an $r$-dimensional $\bar A$-invariant subspace, and the characteristic polynomial of $(\bar A|_W)$ is equal to $\bar Z$.

We count these pairs. The number of $r$-dimensional subspaces $W\subset \F_p^n$ is the Gaussian binomial coefficient
$$
\binom{n}{r}_p:=\frac{(p^n-1)\cdots(p^n-p^{r-1})}{(p^r-1)\cdots(p^r-p^{r-1})}.
$$
For a fixed $W$, the number of possible endomorphisms of $W$ with
characteristic polynomial $\bar Z$ is
$$
\frac{|\GL_r(\F_p)|}{p^r-1}.
$$
Indeed, since $\bar Z$ is irreducible of degree $r$, any such matrix is conjugate to the companion matrix of $\bar Z$, whose centralizer in $\GL_r(\F_p)$ is naturally
$$
(\F_p[x]/(\bar Z))^\times,
$$
of size $p^r-1$.

After choosing $\bar A|_W$, the remaining entries of $\bar A$ are arbitrary subject only to the condition that $W$ is invariant. With respect to a decomposition $\F_p^n=W\oplus W'$, such a matrix has block form
$$
\begin{pmatrix}
\bar A|_W & * \\
0 & *
\end{pmatrix}.
$$
Thus there are
$$
p^{r(n-r)}p^{(n-r)^2}=p^{n(n-r)}
$$
choices for the remaining blocks. Hence the number of bad matrices is at most
$$
\binom{n}{r}_p
\frac{|\GL_r(\F_p)|}{p^r-1}
p^{n(n-r)}.
$$
Dividing by the total number $p^{n^2}$ of matrices in $\Mat_n(\F_p)$, we obtain
$$
\mathbf P\bigl(\det(\bar Z(\bar A))=0\bigr)
\le\binom{n}{r}_p\frac{|\GL_r(\F_p)|}{p^r-1}p^{-nr}.
$$
Using
$$
\binom{n}{r}_p|\GL_r(\F_p)|=
(p^n-1)(p^n-p)\cdots(p^n-p^{r-1}),
$$
we get
$$
\mathbf P\bigl(\det(\bar Z(\bar A))=0\bigr)
\le
\frac{\prod_{i=0}^{r-1}(1-p^{i-n})}{p^r-1}
\le
\frac{1}{p^r-1}.
$$
Consequently,
$$
\E||\det(Z(A))||\ge\mathbf P\bigl(\det(\bar Z(\bar A)\ne 0)\ge
1-\frac{1}{p^r-1},
$$
which completes the proof.
\end{proof}

\begin{proof}[Proof of \Cref{prop: large extension}]
Apply \Cref{prop: prelimiting correlation function} with
$U=\mathcal O_K^{\new}$, we obtain
$$
\E[Z_{\mathcal O_K^{\new},n}]= \prod_{i=0}^{r-1}(1-p^{-n+i})\cdot
\frac{||\Disc_{K/\Q_p}||}{1-p^{-f}}
\int_{\mathcal O_K^{\new}}
\frac{\#(\Lambda\backslash \Mod_{\Z_p[x]})}
{\#(\mathcal O_K/\Z_p[x])^2}
\cdot
\E||\det(Z(A))||
\, dx,
$$
where $Z=Z_x$ is the minimal polynomial of $x$ over $\Q_p$, and
$A\in \Mat_{n-r}(\Z_p)$ is Haar distributed.

We first prove the upper bound.  For every $x\in \mathcal O_K^{\new}$, we have
$$
\frac{\#(\Lambda\backslash \Mod_{\Z_p[x]})}
{\#(\mathcal O_K/\Z_p[x])^2}
\le 1
$$
by \Cref{lem: estimate of orbital integral}. Moreover, $\E||\det(Z(A))||\le 1$, since $Z(A)$ has entries in $\Z_p$.
Also, $\mathcal O_K\setminus \mathcal O_K^{\new}$ is a finite union of
proper $\Q_p$-subspaces of $K$, and hence has Haar measure zero. Therefore
$$
\E[Z_{\mathcal O_K^{\new},n}]\le\frac{||\Disc_{K/\Q_p}||}{1-p^{-f}}\prod_{i=0}^{r-1}(1-p^{-n+i}).
$$

We next prove the lower bound when $K/\Q_p$ is unramified. In this case, we have $e=1,f=r$, and $||\Disc_{K/\Q_p}||=1$. Denote the generator set
$$
\mathcal{G}_K:=\{x\in \mathcal O_K:\mathcal O_K=\Z_p[x]\}.
$$
For $x\in \mathcal{G}_K$, we have $\#(\Lambda\backslash \Mod_{\Z_p[x]})=\#(\mathcal O_K/\Z_p[x])=1$, thus
$$
\frac{\#(\Lambda\backslash \Mod_{\Z_p[x]})}
{\#(\mathcal O_K/\Z_p[x])^2}=1.
$$
Moreover, the reduction of $Z=Z_x$ modulo $p$ is irreducible of degree $r$.
Hence, by \Cref{lem: determinant lower bound}, applied to the random Haar matrix
$A\in \Mat_{n-r}(\Z_p)$, we have
$$
\E||\det(Z(A))||\ge1-\frac{1}{p^r-1}.
$$
Consequently, \begin{equation}\label{eq: medium step lower bound large extension}
\E[Z_{\mathcal O_K^{\new},n}]\ge\prod_{i=0}^{r-1}(1-p^{-n+i})\cdot\frac{1}{1-p^{-r}}\cdot\mu_K(\mathcal{G}_K)
\left(1-\frac{1}{p^r-1}\right).
\end{equation}
It remains to estimate $\mu_K(\mathcal{G}_K)$, the proportion of the set $\mathcal{G}_K$ in $\mathcal{O}_K$ under the Haar probability measure. Since $K/\Q_p$ is unramified, an element $x\in \mathcal O_K$ satisfies $\mathcal O_K=\Z_p[x]$ if and only if its reduction $\bar x\in \F_{p^r}$ generates $\F_{p^r}$ over $\F_p$. Thus, by \Cref{lem: order of generator}, we have
$$
1-\mu_K(\mathcal{G}_K)=1-\frac{G_r}{p^r}=1-\sum_{d|r}\mu\left(\frac{r}{d}\right)p^d\le p^{-r}\sum_{\substack{\ell\mid r\\ \ell\ \mathrm{prime}}}
p^{r/\ell}.
$$
Using the elementary estimate 
$$
p^{-r}\sum_{\substack{\ell\mid r\\ \ell\ \mathrm{prime}}}p^{r/\ell}+\frac{1}{p^r-1}
\le1000p^{-r/2},
$$
we obtain from the generalized Bernoulli's inequality that
$$
\mu_K(\mathcal{G}_K)\left(1-\frac{1}{p^r-1}\right)\ge1-1000p^{-r/2}.
$$
Substituting this into the previous lower bound \eqref{eq: medium step lower bound large extension} gives
$$
\E[Z_{\mathcal O_K^{\new},n}]\ge\frac{1-1000p^{-r/2}}{1-p^{-r}}\prod_{i=0}^{r-1}(1-p^{-n+i}).
$$
Combining this with the upper bound proves the proposition.
\end{proof}

\begin{proof}[Proof of \Cref{thm: zeros degree under rn}]
Since every eigenvalue of an $n\times n$ matrix over $\Z_p$ has degree at most $n$
over $\Q_p$, and since the sets $\mathcal O_K^{\new}$ decompose the eigenvalues
according to the extension they generate, we have
$$
\sum_{[K:\Q_p]\le r_n}\E[Z_{\mathcal O_K^{\new},n}]+\sum_{r_n<[K:\Q_p]\le n}\E[Z_{\mathcal O_K^{\new},n}]=n.
$$
Therefore
$$
\sum_{[K:\Q_p]\le r_n}\E[Z_{\mathcal O_K^{\new},n}]-r_n=(n-r_n)-\sum_{r_n<[K:\Q_p]\le n}\E[Z_{\mathcal O_K^{\new},n}].
$$
It remains to analyze the contribution from extensions of degree larger than $r_n$.

We first show that the total contribution of non-unramified extensions of degree larger than $r_n$ tends to zero. Let $K/\Q_p$ have degree $r=ef$, where $e$ is the ramification index and $f$ is the inertia degree. If $K/\Q_p$ is not unramified, then $e\ge 2$. By
\Cref{prop: large extension},
$$
\E[Z_{\mathcal O_K^{\new},n}]\le\frac{||\Disc_{K/\Q_p}||}{1-p^{-f}}\prod_{i=0}^{r-1}(1-p^{-n+i})\le2||\Disc_{K/\Q_p}||.
$$
Here we used $\frac{1}{1-p^{-f}}\le 2$ and $\prod_{i=0}^{r-1}(1-p^{-n+i})\le 1$. By \Cref{lem: unramified subextension}, every extension $K/\Q_p$ with inertia degree $f$ and ramification index $e$ is a totally ramified extension of $\Q_p[\zeta_{p^f-1}]$ of degree $e$. Hence, by \Cref{lem: serre mass},
$$
\sum_{\substack{[K:\Q_p]=ef\\
\text{inertia degree } f\\
\text{ramification index } e}}||\Disc_{K/\Q_p}||=\frac{e}{p^{(e-1)f}}.
$$
Therefore the total contribution of non-unramified extensions of degree larger than $r_n$ is at most
$$
2\sum_{\substack{ef>r_n\\ e\ge 2}}\frac{e}{p^{(e-1)f}}\le 2\sum_{\substack{ef>r_n\\ e\ge 2}} e p^{-ef/2}\le2\sum_{m>r_n}m^2p^{-m/2}.
$$
Since $r_n\to\infty$, the right-hand side tends to zero. Hence
$$
\lim_{n\to\infty}\sum_{\substack{r_n<[K:\Q_p]\le n\\ K/\Q_p\text{ not unramified}}}
\E[Z_{\mathcal O_K^{\new},n}]=0.
$$

It remains to analyze the unramified extensions of degrees between $r_n+1$ and $n$. For each $1\le r\le n$, write
$$
K_r:=\Q_p[\zeta_{p^r-1}],
$$
the unique unramified extension of $\Q_p$ of degree $r$. From the preceding decomposition, we have
$$
\sum_{[K:\Q_p]\le r_n}\E[Z_{\mathcal O_K^{\new},n}]-r_n=\sum_{r=r_n+1}^{n}\left(1-\E[Z_{\mathcal O_{K_r}^{\new},n}]\right)
+o(1).
$$
Indeed, the term $1$ in each summand corresponds to the contribution of one possible degree, while the contribution from all non-unramified extensions in this range is $o(1)$.

We now evaluate the last sum. Fix $M\ge 1$. Since $n-r_n\to\infty$, for all sufficiently large $n$ we may split
$$
\sum_{r=r_n+1}^{n}
\left(1-\E[Z_{\mathcal O_{K_r}^{\new},n}]\right)=\sum_{r=r_n+1}^{n-M}\left(1-\E[Z_{\mathcal O_{K_r}^{\new},n}]\right)+\sum_{r=n-M+1}^{n}\left(1-\E[Z_{\mathcal O_{K_r}^{\new},n}]\right).
$$

We first bound the range $r_n<r\le n-M$. By \Cref{prop: large extension}, for the
unramified extension $K_r/\Q_p$ of degree $r$, we have
$$
\frac{1-1000p^{-r/2}}{1-p^{-r}}
\prod_{i=0}^{r-1}(1-p^{-n+i})\le\E[Z_{\mathcal O_{K_r}^{\new},n}]\le\frac{1}{1-p^{-r}}\prod_{i=0}^{r-1}(1-p^{-n+i}).
$$
We claim that these bounds imply
$$
\left|1-\E[Z_{\mathcal O_{K_r}^{\new},n}]
\right|\le C_p\left(p^{-r/2}+p^{-(n-r+1)}\right)
$$
for a constant $C_p$ depending only on $p$. Indeed, the upper bound gives
$$
\E[Z_{\mathcal O_{K_r}^{\new},n}]\le\frac{1}{1-p^{-r}},
$$
so the possible excess over $1$ is $O_p(p^{-r})$. For the lower bound, using
$$
1-\prod_{j=n-r+1}^{n}(1-p^{-j})\le\sum_{j=n-r+1}^{n}p^{-j}\le\frac{p^{-(n-r+1)}}{1-p^{-1}},
$$
we obtain
$$
\E[Z_{\mathcal O_{K_r}^{\new},n}]\ge 1-C_p p^{-r/2}-C_p p^{-(n-r+1)}.
$$
This proves the claimed estimate. Therefore
$$
\sum_{r=r_n+1}^{n-M}\left|1-\E[Z_{\mathcal O_{K_r}^{\new},n}]\right|\le C_p\sum_{r=r_n+1}^{\infty}p^{-r/2}+C_p\sum_{r=r_n+1}^{n-M}p^{-(n-r+1)}.
$$
The first sum is $O_p(p^{-r_n/2})$, and the second is $O_p(p^{-M})$. Hence
$$
\limsup_{n\to\infty}
\left|\sum_{r=r_n+1}^{n-M}\left(1-\E[Z_{\mathcal O_{K_r}^{\new},n}]\right)
\right|\le C_p' p^{-M}.
$$

It remains to analyze the last $M$ unramified extensions. Write $r=n-m+1$, where $1\le m\le M$. By \Cref{prop: large extension}, for each fixed $m$,
$$
\lim_{n\to\infty}
\E[Z_{\mathcal O_{K_{n-m+1}}^{\new},n}]=(p^{-m};p^{-1})_\infty.
$$
Indeed,
$$
\prod_{i=0}^{n-m}(1-p^{-n+i})=\prod_{j=m}^{n}(1-p^{-j})
\longrightarrow
\prod_{j=m}^{\infty}(1-p^{-j})=(p^{-m};p^{-1})_\infty,
$$
while the prefactor $\frac{1}{1-p^{-(n-m+1)}}$ and the lower-bound error term $1000p^{-(n-m+1)/2}$ tend to $1$ and $0$,
respectively. Consequently,
$$
\lim_{n\to\infty}
\sum_{r=n-M+1}^{n}
\left(1-\E[Z_{\mathcal O_{K_r}^{\new},n}]\right)=\sum_{m=1}^{M}
\left(1-(p^{-m};p^{-1})_\infty\right).
$$
Combining the two ranges, we obtain
$$
\limsup_{n\to\infty}\left|\sum_{r=r_n+1}^{n}\left(1-\E[Z_{\mathcal O_{K_r}^{\new},n}]\right)-\sum_{m=1}^{M}
\left(1-(p^{-m};p^{-1})_\infty\right)
\right|\le C_p' p^{-M}.
$$
Letting $M\to\infty$, and using the convergence of
$$
\sum_{m=1}^{\infty}\left(1-(p^{-m};p^{-1})_\infty\right),
$$
we conclude that
$$
\lim_{n\to\infty}\sum_{r=r_n+1}^{n}\left(1-\E[Z_{\mathcal O_{K_r}^{\new},n}]\right)=
\sum_{m=1}^{\infty}\left(1-(p^{-m};p^{-1})_\infty\right).
$$
Together with the fact that the non-unramified contribution is $o(1)$, this gives
$$
\lim_{n\to\infty}\left(\sum_{[K:\Q_p]\le r_n}\E[Z_{\mathcal O_K^{\new},n}]-r_n\right)=\sum_{m=1}^{\infty}\left(1-(p^{-m};p^{-1})_\infty\right),
$$
which proves the theorem.
\end{proof}

We now turn to the proof of \Cref{thm: outside un}.  We apply the same
two-sided estimates for unramified extensions, but now sum over all unramified
degrees $1\le r\le n$.  The following proposition gives a more precise
limiting formula for $n-\E[Z_{\Q_p^{\un},n}]$, and in particular proves the existence of the limiting expected number of eigenvalues outside
$\Q_p^{\un}$.

\begin{prop}\label{prop: unramified complement}
The sequence $n-\E[Z_{\Q_p^{\un},n}]$ converges to the limit
$$\lim_{n\rightarrow\infty}\left(n-\E[Z_{\Q_p^{\un},n}]\right)=\sum_{m=1}^\infty \left(1-\lim_{n\rightarrow\infty}\E[Z_{\mathcal O_{\Q_p[\zeta_{p^m-1}]}^{\new},n}]\right)+\sum_{m=1}^\infty\left(1-(p^{-m};p^{-1})_\infty\right).$$
\end{prop}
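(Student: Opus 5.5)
Since $\Q_p^{\un}$ is the disjoint union over $r\ge1$ of the sets $\mathcal O_{K_r}^{\new}$ (intersected with $\bar\Z_p$), where $K_r:=\Q_p[\zeta_{p^r-1}]$, and an eigenvalue of an $n\times n$ matrix has degree at most $n$, we have
$$
n-\E[Z_{\Q_p^{\un},n}]=\sum_{r=1}^{n}\left(1-\E[Z_{\mathcal O_{K_r}^{\new},n}]\right).
$$
The plan is to split this sum into three ranges: a fixed initial range $1\le r\le M$, a middle range $M<r\le n-M$, and a final range $n-M<r\le n$. The initial range converges to the first series and the final range to the second series. The middle range is uniformly small once $M$ is large. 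Finally we let $M\to\infty$.

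\textbf{Initial range.} For each fixed $r\le M$, \Cref{lem: arbitrary extension limit} gives that $\lim_{n\to\infty}\E[Z_{\mathcal O_{K_r}^{\new},n}]$ exists. Hence $\sum_{r=1}^{M}(1-\E[Z_{\mathcal O_{K_r}^{\new},n}])$ converges to $\sum_{m=1}^M(1-\lim_n\E[Z_{\mathcal O_{K_m}^{\new},n}])$.

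\textbf{Final range.} Write $r=n-m+1$ with $1\le m\le M$. The two-sided bounds of \Cref{prop: large extension} squeeze $\E[Z_{\mathcal O_{K_r}^{\new},n}]$ to $(p^{-m};p^{-1})_\infty$, exactly as in the proof of \Cref{thm: zeros degree under rn}. This yields the limit $\sum_{m=1}^M(1-(p^{-m};p^{-1})_\infty)$.

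\textbf{Middle range.} As in the proof of \Cref{thm: zeros degree under rn}, \Cref{prop: large extension} yields
$$
\bigl|1-\E[Z_{\mathcal O_{K_r}^{\new},n}]\bigr|\le C_p\bigl(p^{-r/2}+p^{-(n-r+1)}\bigr).
$$
Summing over $M<r\le n-M$ gives a bound $C_p'(p^{-M/2}+p^{-M})$, uniformly in $n$. Thus
$$
\limsup_{n\to\infty}\Bigl|n-\E[Z_{\Q_p^{\un},n}]-S_M\Bigr|\le C_p'' p^{-M/2},
$$
where $S_M$ is the sum of the two truncated series.

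\textbf{Conclusion and main obstacle.} To let $M\to\infty$, both infinite series must converge. The second converges since $1-(p^{-m};p^{-1})_\infty=O(p^{-m})$. For the first, passing to the limit $n\to\infty$ in the lower bound of \Cref{prop: large extension} gives
$$
\lim_n \E[Z_{\mathcal O_{K_m}^{\new},n}]\ge \frac{1-1000p^{-m/2}}{1-p^{-m}}.
$$
The upper bound gives $\lim_n \E[Z_{\mathcal O_{K_m}^{\new},n}]\le 1/(1-p^{-m})$. So $|1-\lim_n\E|=O_p(p^{-m/2})$ and the series is absolutely summable. Then $S_M$ is Cauchy, and the $\limsup$ estimate forces convergence of the sequence to $\lim_M S_M$, which is the claimed formula.

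The only delicate point is ensuring that the middle-range bound is truly uniform in $n$. The constant $C_p$ from \Cref{prop: large extension} is independent of $r$ and $n$, so this holds. A secondary care point is the decomposition identity, namely that the eigenvalues not in $\Q_p^{\un}$ are exactly those in the ramified $\mathcal O_K^{\new}$, and that multiplicities are handled consistently with the definition of $Z$.
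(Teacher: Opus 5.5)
Your proposal is correct and follows essentially the same route as the paper. Both split $\sum_{r=1}^{n}\bigl(1-\E[Z_{\mathcal O_{K_r}^{\new},n}]\bigr)$ into a fixed-degree range, a middle range and a large-degree range, using \Cref{lem: arbitrary extension limit} for the first and the two-sided bounds of \Cref{prop: large extension} for the other two, then let $M\to\infty$. Your explicit verification that the first series converges absolutely, obtained by letting $n\to\infty$ in those bounds, is a detail the paper leaves implicit: there it follows from the Cauchy property of $S_M$ together with convergence of the second series.
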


\begin{proof}
For each $1\le r\le n$, recall that
$\Q_p[\zeta_{p^r-1}]$ is the unique unramified extension of $\Q_p$ of degree $r$. Since an eigenvalue of an $n\times n$ matrix has degree at most $n$ over $\Q_p$,
we have
$$
\E[Z_{\Q_p^{\un},n}]=\sum_{r=1}^{n}
\E[Z_{\mathcal O_{\Q_p[\zeta_{p^r-1}]}^{\new},n}].
$$
Therefore
$$
n-\E[Z_{\Q_p^{\un},n}]=\sum_{r=1}^{n}\left(
1-\E[Z_{\mathcal O_{\Q_p[\zeta_{p^r-1}]}^{\new},n}]
\right).
$$

Fix $M\ge 1$. When $n>2M$, we can split this sum as
$$
\begin{aligned}
\sum_{r=1}^{n}\left(1-\E[Z_{\mathcal O_{\Q_p[\zeta_{p^r-1}]}^{\new},n}]
\right)&=\sum_{r=1}^{M}
\left(
1-\E[Z_{\mathcal O_{\Q_p[\zeta_{p^r-1}]}^{\new},n}]
\right) \\
&+\sum_{r=M+1}^{n-M}\left(
1-\E[Z_{\mathcal O_{\Q_p[\zeta_{p^r-1}]}^{\new},n}]\right) \\
&+\sum_{r=n-M+1}^{n}\left(1-\E[Z_{\mathcal O_{\Q_p[\zeta_{p^r-1}]}^{\new},n}]
\right),
\end{aligned}
$$
which is a combination of the fixed degree part, the middle range, and the large degree part.

We first consider the fixed degree part.  For each fixed $r$, the limit
$$
\lim_{n\to\infty}\E[Z_{\mathcal O_{\Q_p[\zeta_{p^r-1}]}^{\new},n}]
$$
exists by \Cref{lem: arbitrary extension limit}.  Hence
$$
\lim_{n\to\infty}\sum_{r=1}^{M}\left(
1-\E[Z_{\mathcal O_{\Q_p[\zeta_{p^r-1}]}^{\new},n}]\right)=\sum_{r=1}^{M}\left(
1-\lim_{n\to\infty}\E[Z_{\mathcal O_{\Q_p[\zeta_{p^r-1}]}^{\new},n}]
\right).
$$
We next show that the middle range gives no contribution after sending $M\to\infty$. By \Cref{prop: large extension}, for every $1\le r\le n$,
$$
\frac{1-1000p^{-r/2}}{1-p^{-r}}\prod_{i=0}^{r-1}(1-p^{-n+i})\le\E[Z_{\mathcal O_{\Q_p[\zeta_{p^r-1}]}^{\new},n}]\le\frac{1}{1-p^{-r}}\prod_{i=0}^{r-1}(1-p^{-n+i}).
$$
Moreover,
$$
\prod_{i=0}^{r-1}(1-p^{-n+i})=\prod_{j=n-r+1}^{n}(1-p^{-j}).
$$
If $M<r<n-M+1$, then $r\ge M+1$ and $n-r+1\ge M+1$.  Therefore
$$
1-\prod_{j=n-r+1}^{n}(1-p^{-j})\le\sum_{j=n-r+1}^{n}p^{-j}\le\frac{p^{-(n-r+1)}}{1-p^{-1}}.
$$
The upper and lower bounds from \Cref{prop: large extension} then imply
$$
\left|1-\E[Z_{\mathcal O_{\Q_p[\zeta_{p^r-1}]}^{\new},n}]\right|\le C_p\left(p^{-r/2}+p^{-(n-r+1)}\right)
$$
for a constant $C_p$ depending only on $p$. Hence
$$
\begin{aligned}
\sum_{r=M+1}^{n-M}\left|1-\E[Z_{\mathcal O_{\Q_p[\zeta_{p^r-1}]}^{\new},n}]\right|
&\le C_p\sum_{r=M+1}^{\infty}p^{-r/2}+C_p\sum_{r=M+1}^{n-M}p^{-(n-r+1)}  \\
&\le C_p'\,p^{-M/2}.
\end{aligned}
$$
Thus
$$
\lim_{M\to\infty}\limsup_{n\to\infty}\left|\sum_{r=M+1}^{n-M}\left(1-\E[Z_{\mathcal O_{\Q_p[\zeta_{p^r-1}]}^{\new},n}]\right)\right|=0.
$$

It remains to analyze the large degree part. Write $r=n-m+1$ with
$1\le m\le M$. Applying again the upper and lower bounds in \Cref{prop: large extension}, we obtain that for all fixed $1\le m\le M$,
$$
\lim_{n\to\infty}\E[Z_{\mathcal O_{\Q_p[\zeta_{p^{n-m+1}-1}]}^{\new},n}]=(p^{-m};p^{-1})_\infty.
$$
Therefore
$$
\lim_{n\to\infty}\sum_{r=n-M+1}^{n}\left(
1-\E[Z_{\mathcal O_{\Q_p[\zeta_{p^r-1}]}^{\new},n}]\right)=\sum_{m=1}^{M}\left(
1-(p^{-m};p^{-1})_\infty\right).
$$

We now combine the three ranges more carefully. For fixed $M$, the preceding
estimates show that
\begin{multline*}
\limsup_{n\to\infty}\Bigg|\left(n-\E[Z_{\Q_p^{\un},n}]\right)-\sum_{r=1}^{M}\left(1-\lim_{n\to\infty}
\E[Z_{\mathcal O_{\Q_p[\zeta_{p^r-1}]}^{\new},n}]
\right)-\sum_{m=1}^{M}\left(1-(p^{-m};p^{-1})_\infty\right)\Bigg|\\
\le C_p' p^{-M/2}.
\end{multline*}
Indeed, the first sum comes from the fixed degree range $1\le r\le M$, the second sum comes from the large degree range $n-M+1\le r\le n$, and the only remaining contribution is the middle range, which is bounded by $C_p'p^{-M/2}$. Sending $M\to\infty$ proves that
$$
\lim_{n\to\infty}\left(n-\E[Z_{\Q_p^{\un},n}]\right)=\sum_{m=1}^{\infty}\left(1-\lim_{n\to\infty}
\E[Z_{\mathcal O_{\Q_p[\zeta_{p^m-1}]}^{\new},n}]\right)+\sum_{m=1}^{\infty}
\left(1-(p^{-m};p^{-1})_\infty\right),
$$
as claimed.
\end{proof}

The expression obtained in \Cref{prop: unramified complement} immediately gives
the following explicit estimate.

\begin{corollary}\label{cor: explicit estimate unramified complement}
We have
$$
\lim_{n\rightarrow\infty}\E[Z_{\bar\Q_p\backslash\Q_p^{\un},n}]=\lim_{n\to\infty}\left(n-\E[Z_{\Q_p^{\un},n}]\right)\le\frac{p(4p-3)}{(p-1)^3}.
$$
\end{corollary}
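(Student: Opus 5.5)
The plan is to bound each of the two series in the limiting formula of \Cref{prop: unramified complement} separately, term by term. Every unramified extension has trivial discriminant, so all the needed inputs are already explicit. Write $K_m:=\Q_p[\zeta_{p^m-1}]$; it has $e=1$, $f=m$ and $||\Disc_{K_m/\Q_p}||=1$. The first equality in the statement is the trivial identity $Z_{\bar\Q_p\setminus\Q_p^{\un},n}=n-Z_{\Q_p^{\un},n}$, so only the upper bound needs proof.

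\emph{First series.} Apply \Cref{lem: arbitrary extension limit} to $K_m$ and combine it with \Cref{lem: order of generator}. This gives
$$\lim_{n\to\infty}\E[Z_{\mathcal O_{K_m}^{\new},n}]=\frac{G_m}{p^m}+\mathcal E(K_m),\qquad \mathcal E(K_m)>-p^{-m},$$
and hence
$$1-\lim_{n\to\infty}\E[Z_{\mathcal O_{K_m}^{\new},n}]<\Bigl(1-\frac{G_m}{p^m}\Bigr)+p^{-m}.$$
Next, $p^m-G_m$ is the number of elements of $\F_{p^m}$ lying in a proper subfield. This is at most $\sum_{d\mid m,\,d<m}p^d$, which avoids any M\"obius sign bookkeeping; for $m=1$ the sum is empty and the bound is $0$. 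Summing over $m$ and interchanging the order of summation (write $m=dk$ with $k\ge2$) gives
$$\sum_{m\ge1}p^{-m}\sum_{\substack{d\mid m\\ d<m}}p^d=\sum_{d\ge1}p^d\sum_{k\ge2}p^{-dk}=\sum_{d\ge1}\frac{p^{-d}}{1-p^{-d}}\le\frac{p}{p-1}\sum_{d\ge1}p^{-d}=\frac{p}{(p-1)^2}.$$
The $p^{-m}$ error terms contribute $\sum_m p^{-m}=\frac{1}{p-1}$.

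\emph{Second series.} Use $1-\prod_{j\ge m}(1-p^{-j})\le\sum_{j\ge m}p^{-j}=\frac{p^{-m}}{1-p^{-1}}$. Summing over $m\ge1$ gives at most $\frac{p}{(p-1)^2}$.

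Adding the three pieces, the limit is at most
$$\frac{1}{p-1}+\frac{2p}{(p-1)^2}=\frac{3p-1}{(p-1)^2}.$$
It then remains to check that $(3p-1)(p-1)=3p^2-4p+1\le 4p^2-3p=p(4p-3)$, which holds since the difference is $p^2+p-1>0$. This shows $\frac{3p-1}{(p-1)^2}\le\frac{p(4p-3)}{(p-1)^3}$ and proves the claim.

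The only delicate step is the first series. There one must use the one-sided error bound $\mathcal E(K_m)>-p^{-m}$ in the correct direction; the upper bound on $\mathcal E$ is irrelevant for an upper bound on $1-\lim$. One must also make the divisor-sum interchange converge cleanly. If the summation constants came out slightly worse, the stated bound leaves slack of an extra factor of roughly $\frac{4p}{3(p-1)}$ to absorb it.
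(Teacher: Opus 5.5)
Your argument is correct and follows the paper's route: you substitute the formula of \Cref{prop: unramified complement}, bound the $q$-Pochhammer series by $\frac{p}{(p-1)^2}$ via $1-\prod_{j\ge m}(1-p^{-j})\le\sum_{j\ge m}p^{-j}$, and bound the series over $K_m=\Q_p[\zeta_{p^m-1}]$ via \Cref{lem: arbitrary extension limit} together with the interchange $m=dk$, $k\ge 2$. The one difference is that the paper bounds $\bigl|1-\lim_{n}\E[Z_{\mathcal O_{K_m}^{\new},n}]\bigr|$ and so pays the upper error term $\frac{1+\tau(m)}{1-p^{-m}}p^{-m}$, which contributes $\frac{p(2p-1)}{(p-1)^3}$; your one-sided use of $\mathcal E(K_m)>-p^{-m}$ costs only $\frac{1}{p-1}$ and gives the sharper bound $\frac{3p-1}{(p-1)^2}$, which you correctly verify is at most $\frac{p(4p-3)}{(p-1)^3}$.
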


\begin{proof}
By \Cref{prop: unramified complement},
$$\lim_{n\rightarrow\infty}\left(n-\E[Z_{\Q_p^{\un},n}]\right)=\sum_{m=1}^\infty \left(1-\lim_{n\rightarrow\infty}\E[Z_{\mathcal O_{\Q_p[\zeta_{p^m-1}]}^{\new},n}]\right)+\sum_{m=1}^\infty\left(1-(p^{-m};p^{-1})_\infty\right).$$
We now bound the two series on the right-hand side.  First, since the extension $\Q_p[\zeta_{p^m-1}]/\Q_p$ is unramified of degree $m$, we have $||\Disc_{\Q_p[\zeta_{p^m-1}]/\Q_p}||=1$. Therefore, by \Cref{lem: arbitrary extension limit},
$$
\lim_{n\to\infty}
\E[Z_{\mathcal O_{\Q_p[\zeta_{p^m-1}]}^{\new},n}]=\sum_{d\mid m}\mu\left(\frac{m}{d}\right)p^{d-m}+\mathcal{E}(\Q_p[\zeta_{p^m-1}]),
$$
where
$$
-p^{-m}<\mathcal{E}(\Q_p[\zeta_{p^m-1}])<\frac{1+\tau(m)}{1-p^{-m}}p^{-m}.
$$
Since the term $d=m$ in the main sum is equal to $1$, we have
$$
\left|1-\lim_{n\to\infty}
\E[Z_{\mathcal O_{\Q_p[\zeta_{p^m-1}]}^{\new},n}]\right|\le\sum_{\substack{d\mid m\\ d<m}}p^{d-m}+\frac{1+\tau(m)}{1-p^{-m}}p^{-m}.
$$
Therefore
$$
\lim_{n\to\infty}\left(n-\E[Z_{\Q_p^{\un},n}]\right)\le\sum_{m=1}^{\infty}\left(1-(p^{-m};p^{-1})_\infty\right)+\sum_{m=1}^{\infty}\left(\sum_{\substack{d\mid m\\ d<m}}p^{d-m}+\frac{1+\tau(m)}{1-p^{-m}}p^{-m}
\right).
$$
This proves the more precise displayed estimate.

It remains to give the simpler closed-form bound. We use three elementary
estimates. First, following the generalized Bernoulli's inequality,
$$
1-(p^{-m};p^{-1})_\infty=1-\prod_{j=m}^{\infty}(1-p^{-j})\le\sum_{j=m}^{\infty}p^{-j}.
$$
Hence, we have
\begin{equation}\label{eq: first bound}
\sum_{m=1}^{\infty}\left(1-(p^{-m};p^{-1})_\infty\right)\le\sum_{m=1}^{\infty}\sum_{j=m}^{\infty}p^{-j}=\sum_{j=1}^{\infty}j p^{-j}=\frac{p}{(p-1)^2}.
\end{equation}

Second,
$$
\sum_{m=1}^{\infty}\sum_{\substack{d\mid m\\ d<m}}p^{d-m}\le\sum_{d=1}^{\infty}\sum_{k=2}^{\infty}p^{d-kd}=\sum_{d=1}^{\infty}\frac{p^{-d}}{1-p^{-d}}.
$$
Since
$$
\sum_{d=1}^{\infty}\frac{p^{-d}}{1-p^{-d}}=
\sum_{d=1}^{\infty}\frac{1}{p^d-1}\le\frac{p}{(p-1)^2},
$$
we get
\begin{equation}\label{eq: second bound}
\sum_{m=1}^{\infty}\sum_{\substack{d\mid m\\ d<m}}p^{d-m}\le\frac{p}{(p-1)^2}.
\end{equation}

Third, since $\frac{1}{1-p^{-m}}\le \frac{p}{p-1}$, we have
$$
\sum_{m=1}^{\infty}\frac{1+\tau(m)}{1-p^{-m}}p^{-m}\le\frac{p}{p-1}\left(
\sum_{m=1}^{\infty}p^{-m}+\sum_{m=1}^{\infty}\tau(m)p^{-m}\right).
$$
Now $\sum_{m=1}^{\infty}p^{-m}=\frac{1}{p-1}$, and
$$
\sum_{m=1}^{\infty}\tau(m)p^{-m}=\sum_{a=1}^{\infty}\sum_{b=1}^{\infty}p^{-ab}=\sum_{a=1}^{\infty}\frac{p^{-a}}{1-p^{-a}}\le\frac{p}{(p-1)^2}.
$$
Therefore
\begin{equation}\label{eq: third bound}
\sum_{m=1}^{\infty}\frac{1+\tau(m)}{1-p^{-m}}p^{-m}\le\frac{p}{p-1}\left(\frac{1}{p-1}+\frac{p}{(p-1)^2}\right)=\frac{p(2p-1)}{(p-1)^3}.
\end{equation}

Combining the three estimates \eqref{eq: first bound}, \eqref{eq: second bound}, and \eqref{eq: third bound} gives
$$
\lim_{n\to\infty}\left(n-\E[Z_{\Q_p^{\un},n}]\right)\le\frac{p}{(p-1)^2}+\frac{p}{(p-1)^2}+\frac{p(2p-1)}{(p-1)^3}=\frac{p(4p-3)}{(p-1)^3}.
$$
This proves the corollary.
\end{proof}

\begin{proof}[Proof of \Cref{thm: outside un}]
By \Cref{prop: unramified complement}, the sequence
$n-\E[Z_{\Q_p^{\un},n}]$ converges as $n\to\infty$, and its limit is identified by decomposing the unramified contribution into fixed degree, middle-degree, and large degree ranges.  Since
\begin{equation*}
Z_{\overline{\Q}_p\setminus\Q_p^{\un},n}=
n-Z_{\Q_p^{\un},n},
\end{equation*}
this proves the existence of the limit in \Cref{thm: outside un}. The explicit upper bound $\frac{p(4p-3)}{(p-1)^3}$ is exactly given in
\Cref{cor: explicit estimate unramified complement}.

It remains to show that the limit is positive. Let $K/\Q_p$ be any ramified
quadratic extension. Then $K$ is not contained in $\Q_p^{\un}$, and every element of $\mathcal O_K^{\new}$ generates the ramified extension $K$ over $\Q_p$. Hence
\begin{equation*}
\mathcal O_K^{\new}\subset\overline{\Q}_p\setminus \Q_p^{\un}.
\end{equation*}
Therefore, for every $n$,
\begin{equation*}
\E[Z_{\overline{\Q}_p\setminus\Q_p^{\un},n}]\ge\E[Z_{\mathcal O_K^{\new},n}].
\end{equation*}
By \cite[Theorem 1.10]{shen2026eigenvalues}, the limit $
\lim_{n\to\infty}\E[Z_{\mathcal O_K^{\new},n}]$ exists and is positive. Consequently,
\begin{equation*}
\lim_{n\to\infty}
\E[Z_{\overline{\Q}_p\setminus\Q_p^{\un},n}]\ge\lim_{n\to\infty}\E[Z_{\mathcal O_K^{\new},n}]>0.
\end{equation*}
This proves the positivity of the limit and completes the proof.
\end{proof}

\section{The random polynomial case}\label{sec: The random polynomial case}

In this section, we prove the polynomial analogues of the main results from the random matrix case. We first recall the correlation function formula from Caruso \cite{caruso2022zeroes} for random Haar polynomials, then use it to prove uniform estimates for finite extensions and finally establish \Cref{thm: poly zeros degree under rn} and \Cref{thm: poly outside un}.

\begin{prop}\label{prop: prelimiting correlation function_poly}
Let $K/\Q_p$ be a finite extension of degree $r\le n$. Let $U \subset K^{\new}$ be any measurable set. Then, we have
$$\E[Z_{U,n}^{\poly}]=\int_U\rho^{(n),\poly}_{K}(x)dx,$$
where
\begin{equation}
\rho_{K}^{(n),\poly}(x)=
\begin{cases}
\frac{||\Disc_{K/\Q_p}||}{\#(\mc{O}_{K}/\Z_p[x])} \cdot \int_{\Omega_{n-r}(\Z_p)}||P(x)||^rdP & x\in\mathcal{O}_K^{\new}\\ 
||x||^{-2r}\cdot\rho_{K}^{(n),\poly}(x^{-1}) & x\in K^{\new}\backslash\mc{O}_K^{\new}
\end{cases}.
\end{equation}
Here, the integration (and definition of measurability of $U$) is with respect to the Haar probability measure on $K$, normalized so that $\mc{O}_K$ has measure $1$. 
\end{prop}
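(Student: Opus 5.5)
This is essentially a translation of Caruso's correlation function for Haar polynomials \cite{caruso2022zeroes}, in the same way that \Cref{prop: prelimiting correlation function} translated \cite[Lemma 1.13]{shen2026eigenvalues}. I would proceed in three steps.

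\textbf{Step 1: the integral case.} Caruso proves that for $x\in\mathcal O_K^{\new}$ the one-point density of roots of a Haar polynomial in $\Omega_n(\Z_p)$ lying in $K$ is
\[
\|\Delta_\sigma(x)\|\cdot\int_{\Omega_{n-r}(\Z_p)}\|P(x)\|^r\,dP,
\]
with respect to the Haar probability measure on $\mathcal O_K$. Here $\Delta_\sigma(x)$ is the Vandermonde of the Galois conjugates of $x$. Heuristically, a polynomial with root $x$ factors as $Z_x\cdot P$, and two things contribute:
\begin{itemize}
\item the Jacobian of $(x,P)\mapsto Z_xP$ is $\prod_\sigma\prod_{\tau}(\sigma x-\tau x)$ times the resultant of $Z_x$ and $P$;
\item that resultant equals $N_{K/\Q_p}(P(x))$, with absolute value $\|P(x)\|^r$ in the normalized absolute value on $K$.
\end{itemize}
One power of the Vandermonde is absorbed by the change of variables from $x\in\mathcal O_K$ to the coefficients of $Z_x$.

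I would then rewrite the Vandermonde factor using the discriminant--index formula already quoted in the proof of \Cref{prop: prelimiting correlation function}:
\[
\|\Delta_\sigma(x)\|^2=\|\Disc_{K/\Q_p}\|\cdot\#(\mathcal O_K/\Z_p[x])^{-2}.
\]
The main thing to check is the exact power of the Vandermonde in Caruso's normalization. Matching the stated prefactor $\|\Disc_{K/\Q_p}\|/\#(\mathcal O_K/\Z_p[x])$ requires the disc and index factors to combine exactly this way. I would verify this by comparing the normalizations of Haar measures (on $\mathcal O_K$ versus on $\Z_p^r$), and by recomputing the case $K=\Q_p$. There the formula should reduce to the classical density $\int_{\Omega_{n-1}}\|P(x)\|\,dP$.

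\textbf{Step 2: roots outside $\mathcal O_K$.} I would use the involution $P(X)\mapsto X^nP(1/X)$. It reverses the coefficient vector, so it preserves the Haar measure on $\Omega_n(\Z_p)$. It also sends roots $x\neq0$ to $x^{-1}$, preserving multiplicity, and preserves $K^{\new}$. Hence for measurable $U\subset K^{\new}\setminus\mathcal O_K^{\new}$,
\[
\E[Z_{U,n}^{\poly}]=\E[Z_{U^{-1},n}^{\poly}]=\int_{U^{-1}}\rho_K^{(n),\poly}(y)\,dy,
\]
and $U^{-1}\subset\mathcal O_K^{\new}$. Substituting $y=x^{-1}$, the Haar measure transforms by $d(x^{-1})=\|x\|^{-2}dx$, where $\|\cdot\|$ is the absolute value normalized on $K$. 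This gives the factor $\|x\|^{-2r}$ in the stated normalization with $\|\cdot\|$ extending $|\cdot|_p$. Again the power should be double-checked against the convention for $\|\cdot\|$.

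\textbf{Step 3: general measurable $U$.} Finally, I would split an arbitrary measurable $U$ as $(U\cap\mathcal O_K^{\new})\cup(U\setminus\mathcal O_K)$ and add the two cases. Measurability is taken with respect to Haar measure on $K$, and the lower-degree elements form a null set, so they do not affect the integrals.
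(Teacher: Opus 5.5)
Your Step 1 has a genuine error. It sits exactly at the normalization you flagged but did not resolve.

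The density is not $\|\Delta_\sigma(x)\|\int_{\Omega_{n-r}(\Z_p)}\|P(x)\|^r\,dP$. Combined with the discriminant--index formula, that expression gives the prefactor $\|\Disc_{K/\Q_p}\|^{1/2}/\#(\mathcal O_K/\Z_p[x])$. This differs from the stated $\|\Disc_{K/\Q_p}\|/\#(\mathcal O_K/\Z_p[x])$ by a factor $\|\Disc_{K/\Q_p}\|^{1/2}$ whenever $K$ is ramified.

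The missing factor comes from the normalization of Haar measure on $\mathcal O_K$. Let $\omega_1,\dots,\omega_r$ be a $\Z_p$-basis of $\mathcal O_K$; this basis is what identifies $dx$ with Haar measure on $\Z_p^r$. Passing to the conjugate coordinates $(\sigma_i x)_i$ costs $\|\det(\sigma_i\omega_j)\|=\|\Disc_{K/\Q_p}\|^{1/2}$. Hence the Jacobian of $x\mapsto Z_x$ (its coefficient vector) with respect to $dx$ is
\[
\|\Disc_{K/\Q_p}\|^{1/2}\,\|\Delta_\sigma(x)\|=\frac{\|\Disc_{K/\Q_p}\|}{\#(\mathcal O_K/\Z_p[x])}.
\]
Multiply this by the Jacobian $\|\operatorname{Res}(Z_x,P)\|=\|N_{K/\Q_p}(P(x))\|=\|P(x)\|^r$ of $(Z,P)\mapsto ZP$. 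Then use that $Z_xP\in\Omega_n(\Z_p)$ if and only if $P\in\Omega_{n-r}(\Z_p)$, by Gauss's lemma, since $Z_x$ is monic over $\Z_p$. The area formula then gives exactly the stated density. There is no power of the Vandermonde to ``absorb''; the density is just the Jacobian of $x\mapsto Z_x$ times the resultant.

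A concrete check: take $p$ odd, $K=\Q_p(\sqrt p)$ and $x=a+b\sqrt p$. Then $Z_x=X^2-2aX+a^2-pb^2$, and its coefficient map has Jacobian $|4pb|_p=p^{-1}|b|_p$. This equals $\|\Disc_{K/\Q_p}\|/\#(\mathcal O_K/\Z_p[x])$, whereas $\|\Delta_\sigma(x)\|=|2b\sqrt p|=p^{-1/2}|b|_p$. The sanity check you propose at $K=\Q_p$ cannot detect this, since $\|\Disc_{K/\Q_p}\|=1$ there and for every unramified $K$.

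For comparison, the paper never goes through the Vandermonde. Caruso's Definition~2.3 already carries the prefactor $\|\Disc_{K/\Q_p}\|/\#(\mathcal O_K/\Z_p[x])$, and it defines the density off $\mathcal O_K$ by the inversion $x\mapsto x^{-1}$. The paper cites Caruso's Theorem~2.6 for open $U\subset K^{\new}$. It then extends to measurable $U$ because both sides are measures that agree on open sets.

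Your Step 2 is a correct, self-contained derivation of the second case: coefficient reversal preserves Haar measure, and $d(x^{-1})=\|x\|^{-2r}dx$. However, if Step 1 rests on Caruso's theorem, you still need the open-to-measurable extension, and your Step 3 does not address it.
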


\begin{proof}
We apply Caruso's correlation function formalism \cite[Definition 2.3 and
Theorem 2.6]{caruso2022zeroes} with his base field $F$ equal to our $\Q_p$.
Let us first translate the notation.  In Caruso's notation, $q$ denotes the
cardinality of the residue field of $F$, so in our setting $q=p$.  His ring of
integers $\mathcal O_F$ is our $\Z_p$, and his space $\Omega_n$ of polynomials
of degree at most $n$ with coefficients in $\mathcal O_F$ is exactly our
$\Omega_n(\Z_p)$.  The probability measure $\mu_n$ on $\Omega_n$ in Caruso's
paper is the product Haar measure on the coefficients, which is precisely the
probability measure used here.  Finally, for a finite extension $K/\Q_p$,
Caruso's Haar measure $\lambda_K$ on $K$ is normalized by
$\lambda_K(\mathcal O_K)=1$, which is the normalization of $dx$ used in the
statement.

Caruso defines, for every finite extension $K/F$ and every $n$, a density
function $\rho_{K,n}:K\to\mathbb R_{\ge 0}$.  In our notation this is the
function denoted by $\rho_K^{(n),\poly}$.  We now recall its explicit form in
the special case $F=\Q_p$.  Let $r=[K:\Q_p]$.  If $x\in \mathcal O_K^{\new}$,
then \cite[Definition 2.3]{caruso2022zeroes} gives
\begin{equation}
\rho_K^{(n),\poly}(x)=\frac{||\Disc_{K/\Q_p}||}{\#(\mathcal O_K/\Z_p[x])}\int_{\Omega_{n-r}(\Z_p)} ||P(x)||^r\,dP.
\end{equation}
Here Caruso's discriminant factor $D_K$ is the $p$-adic norm of the discriminant of $K/\Q_p$, namely $||\Disc_{K/\Q_p}||$, and
$\mathcal O_F[x]$ in his notation is $\Z_p[x]$ in ours.

It remains to explain the formula outside the unit ball. If $x\in K^{\new}\setminus \mathcal O_K^{\new}$, then $x^{-1}\in
\mathcal O_K^{\new}$. \cite[Definition 2.3]{caruso2022zeroes} defines the density at $x$ by the homographic transformation $x\mapsto x^{-1}$, giving
\begin{equation}
\rho_K^{(n),\poly}(x)=||x||^{-2r}\rho_K^{(n),\poly}(x^{-1}).
\end{equation}
This is exactly the formula given in the statement of the proposition.

We now turn to the integral formula.  In Caruso's notation, an element $x\in K$ is new in $K$ if $F[x]=K$.  Since $F=\Q_p$ here, this is exactly the condition $x\in K^{\new}$ in our notation. For an open subset $U\subset K$, Caruso defines $Z_{U,n}^{\new}$ to be the number of roots of a random polynomial in $\Omega_n$ which lie in $U$ and are new in $K$.  Therefore, when $U\subset K^{\new}$, this random variable is exactly our $Z_{U,n}^{\poly}$. \cite[Theorem 2.6]{caruso2022zeroes} then gives
\begin{equation}
\E[Z_{U,n}^{\poly}]=\int_U \rho_K^{(n),\poly}(x)\,dx
\end{equation}
for open subsets $U\subset K^{\new}$.

Finally, the same identity holds for every measurable subset $U\subset K^{\new}$ considered in this paper. Indeed, both sides define measures on $K^{\new}$: the left-hand side is the expected counting measure of new roots, and the right-hand side is the measure with density $\rho_K^{(n),\poly}$ with respect to Haar measure. Since the two measures agree on open subsets by Caruso's theorem, and Haar measure on the local field $K$ is regular, they agree on all measurable subsets. This proves the proposition.
\end{proof}

\begin{prop}\label{prop: bound for not unramified_poly}
Let $K/\Q_p$ be a finite extension of degree $r=ef\le n$, where $e$ is the ramification index and $f$ is the inertia degree. Then, we have the upper bound
$$\E[Z_{K^{\new},n}^{\poly}]\le(1+4p^{-f})||\Disc_{K/\Q_p}||.$$
\end{prop}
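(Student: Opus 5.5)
The plan is to split $K^{\new}$ into its integral part $\mathcal O_K^{\new}$ and its non-integral part $K^{\new}\setminus\mathcal O_K^{\new}$. I would apply \Cref{prop: prelimiting correlation function_poly} to each piece, fold the non-integral piece back into the unit ball by the inversion $x\mapsto x^{-1}$, and then bound the density pointwise by $||\Disc_{K/\Q_p}||$.

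\textbf{Pointwise bound.} For $x\in\mathcal O_K^{\new}$, first observe that $\#(\mathcal O_K/\Z_p[x])\ge 1$. Next, for every $P\in\Omega_{n-r}(\Z_p)$, the value $P(x)$ lies in $\mathcal O_K$, so $||P(x)||^r\le 1$. Hence
$$\rho_K^{(n),\poly}(x)\le ||\Disc_{K/\Q_p}||\qquad (x\in \mathcal O_K^{\new}).$$
Integrating over $\mathcal O_K^{\new}$, which has measure at most $1$, bounds the integral part by $||\Disc_{K/\Q_p}||$.

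\textbf{Non-integral part.} Here I use the second line of the formula in \Cref{prop: prelimiting correlation function_poly} and substitute $y=x^{-1}$.
\begin{itemize}
\item The inversion is a bijection from $K^{\new}\setminus\mathcal O_K^{\new}$ onto $\mathfrak m_K\cap K^{\new}$, where $\mathfrak m_K$ is the maximal ideal. Elements of norm $1$ stay in $\mathcal O_K$, so $x\notin\mathcal O_K$ forces $||x^{-1}||<1$.
\item Here $||\cdot||$ is the extension of the $p$-adic norm, so $||x||^r$ is the normalized absolute value $|x|_K$ on $K$.
\item The Haar measure on $K$ transforms under inversion by $d(x^{-1})=|x|_K^{-2}\,dx=||x||^{-2r}dx$. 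This is the standard change of variables: $x\mapsto x^{-1}$ is locally analytic with derivative $-x^{-2}$, and the $p$-adic change-of-variables formula scales Haar measure by $|{\rm derivative}|_K$.
\end{itemize}
Therefore
$$\int_{K^{\new}\setminus\mathcal O_K^{\new}}||x||^{-2r}\rho_K^{(n),\poly}(x^{-1})\,dx=\int_{\mathfrak m_K\cap K^{\new}}\rho_K^{(n),\poly}(y)\,dy\le ||\Disc_{K/\Q_p}||\cdot \mu(\mathfrak m_K)=p^{-f}||\Disc_{K/\Q_p}||.$$
The last equality uses that $\mathfrak m_K$ has index $\#(\mathcal O_K/\mathfrak m_K)=p^f$ in $\mathcal O_K$, so its normalized measure is $p^{-f}$.

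\textbf{Conclusion.} Adding the two pieces gives
$$\E[Z^{\poly}_{K^{\new},n}]\le(1+p^{-f})||\Disc_{K/\Q_p}||\le(1+4p^{-f})||\Disc_{K/\Q_p}||.$$

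\textbf{Main obstacle.} The step needing care is the change of variables. One must check that the factor $||x||^{-2r}$ in Caruso's definition matches the Jacobian of inversion in the normalization $||\cdot||^r=|\cdot|_K$. If Caruso's norm convention differed, I would recompute the Jacobian directly: on each shell $\{|x|_K=p^{fk}\}$, inversion maps it onto the shell $\{|y|_K=p^{-fk}\}$, and one compares the Haar masses of the two shells. A discrepancy would only change the constant multiplying $p^{-f}$, which the stated factor $4$ has room to absorb.
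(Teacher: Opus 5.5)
Your proof is correct, and it takes a genuinely different and more elementary route than the paper.

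The paper never bounds the density directly. For $n\ge 2r-1$ it quotes Caruso's Corollary 4.6, namely $\E[Z^{\poly}_{K^{\new},n}]/||\Disc_{K/\Q_p}||\le \frac{p^f}{p^f+1}\cdot\frac{G_f}{p^f}+4p^{-f}$, and then uses $G_f\le p^f$. For $r\le n<2r-1$ it invokes Caruso's monotonicity theorem, $\rho_K^{(n),\poly}\le\rho_K^{(2r-1),\poly}$, to reduce to the stable range.

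You use only the explicit density from \Cref{prop: prelimiting correlation function_poly}. The first ingredient is the trivial pointwise bound $\rho_K^{(n),\poly}\le ||\Disc_{K/\Q_p}||$ on $\mathcal O_K^{\new}$. This holds because the index is at least $1$, $||P(x)||\le 1$, and $dP$ is a probability measure. The second ingredient is the inversion $x\mapsto x^{-1}$. It preserves $K^{\new}$, and its Jacobian $|x|_K^{-2}=||x||^{-2r}$ exactly cancels the prefactor. So the non-integral part becomes $\int_{\mathfrak m_K\cap K^{\new}}\rho_K^{(n),\poly}(y)\,dy\le p^{-f}||\Disc_{K/\Q_p}||$.

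Your argument works uniformly for every $n\ge r$. It needs neither Caruso's refined estimate nor monotonicity in $n$, and it gives the sharper constant $1+p^{-f}$ in place of $1+4p^{-f}$. What the paper's citation carries, and yours discards, is the main term $\frac{p^f}{p^f+1}\cdot\frac{G_f}{p^f}$. However, only an $O(||\Disc_{K/\Q_p}||)$ bound is used later, in the mass-formula summation for \Cref{thm: poly zeros degree under rn}, so nothing is lost.

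Your hedge in the last paragraph is unnecessary, and as phrased it is not quite right. The factor $||x||^{-2r}$ is forced: Haar measure on $\Omega_n(\Z_p)$ is invariant under the coefficient reversal $Q(X)\mapsto X^nQ(1/X)$, which sends roots $x$ to $x^{-1}$. So your shell computation confirms the Jacobian exactly. Moreover, a genuine exponent mismatch would not merely change a constant. For instance, a prefactor $|x|_K^{-1}$ would make the transformed integral over $\mathfrak m_K$ diverge.
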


\begin{proof}
We use the estimate from Caruso \cite{caruso2022zeroes} for the correlation function under the polynomial setting. We specialize $F=\Q_p$, so his $q$ is our $p$. Moreover, his $||D_K||$ is $||\Disc_{K/\Q_p}||$ under our notation,  his $\rho_{K,n}$ is our $\rho_{K}^{(n),\poly}$, and his $\rho_n(K)$ is our $\E[Z_{K^{\new},n}^{\poly}]$.

Now, under the notation of the present paper, \cite[Corollary 4.6]{caruso2022zeroes} gives, for $n\ge 2r-1$,
$$
\frac{\E[Z_{K^{\new},n}^{\poly}]}{||\Disc_{K/\Q_p}||}-\frac{p^f}{p^f+1}\cdot \frac{G_f}{p^f}\le 4p^{-f}.
$$
Here, the definition of $G_f$ follows from \eqref{eq: G_r}. It is clear that $G_f\le p^f$, hence
$$
\E[Z_{K^{\new},n}^{\poly}]\le(1+4p^{-f})||\Disc_{K/\Q_p}||
$$
when $n\ge 2r-1$.

It remains to treat the range $r\le n<2r-1$. By the monotonicity of Caruso's correlation functions \cite[Theorem B.5]{caruso2022zeroes}, for every $x\in K^{\new}$ we have
$$
\rho^{(n),\poly}_K(x)\le\rho^{(2r-1),\poly}_K(x).
$$
Integrating over $K$ gives
$$
\int_K \rho^{(n),\poly}_K(x)\,dx\le\int_K \rho^{(2r-1),\poly}_K(x)\,dx.
$$
Applying the estimate already proved with $n=2r-1$, we get
$$
\E[Z^{\poly}_{K^{\new},n}]=\int_K \rho^{(n),\poly}_K(x)\,dx\le(1+4p^{-f})||\Disc_{K/\Q_p}||.
$$
This completes the proof.
\end{proof}

\begin{prop}\label{prop: bounds for unramified extension_poly}
When $K=\Q_p[\zeta_{p^r-1}]$ and $r\le n\le 2r-1$, we have
$$\frac{p^{n+1}-p^r}{p^{n+1}-1}\cdot\sum_{d|r}\mu\left(\frac{r}{d}\right)p^{d-r}-p^{-r}\le\E[Z_{\Q_p[\zeta_{p^r-1}]^{\new},n}^{\poly}]\le\frac{p^{n+1}-p^r}{p^{n+1}-1}\cdot\sum_{d|r} \mu\left(\frac{r}{d}\right)p^{d-r}+4p^{-r}.$$
Here $\mu$ is the M\"obius function. Moreover, when $n\ge 2r-1$, we have
$$\E[Z_{\Q_p[\zeta_{p^r-1}]^{\new},n}^{\poly}]=\E[Z_{\Q_p[\zeta_{p^r-1}]^{\new},2r-1}^{\poly}].$$
\end{prop}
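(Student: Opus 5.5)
We plan to work directly from the density of \Cref{prop: prelimiting correlation function_poly}, specialised to the unramified field $K=\Q_p[\zeta_{p^r-1}]$. Here $||\Disc_{K/\Q_p}||=1$. By the inversion rule $\rho(x)=||x||^{-2r}\rho(x^{-1})$ and the change of variables $x\mapsto x^{-1}$, whose Jacobian is $||x||^{-2r}$ in a degree-$r$ field, the integral over $K^{\new}\setminus\mathcal O_K^{\new}$ equals the integral of $\rho$ over $p\mathcal O_K\cap K^{\new}$. Hence
\[
\E[Z^{\poly}_{K^{\new},n}]=\int_{\mathcal O_K^{\new}}\rho(x)\,dx+\int_{p\mathcal O_K\cap K^{\new}}\rho(x)\,dx .
\]
We then split $\mathcal O_K^{\new}$ according to the residue $\bar x\in\F_{p^r}$.

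\emph{Main term: generator classes.} There are $G_r$ residues with $\F_p[\bar x]=\F_{p^r}$, a set of Haar measure $G_r/p^r=\sum_{d\mid r}\mu(r/d)p^{d-r}$ by \Cref{lem: order of generator}. For such $x$ we have $\Z_p[x]=\mathcal O_K$, so the index factor is $1$. The minimal polynomial $Z$ of $x$ has irreducible reduction of degree $r$. Since $Z(x)=0$, for $P\in\Omega_{n-r}(\Z_p)$ the value $||P(x)||$ depends only on $P\bmod Z$ in $\mathcal O_K\cong\Z_p[X]/(Z)$. We will compute $\int_{\Omega_{n-r}}||P(x)||^r\,dP$ exactly. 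Write $m=n-r\le r-1$. Then $P$ has degree $<r$, so $P\mapsto P(x)$ identifies $\Omega_m(\Z_p)$ with a rank-$(m+1)$ $\Z_p$-submodule $L$ of $\mathcal O_K$, and this submodule is saturated. Because $P(x)\equiv0\pmod p$ forces $\bar P=0$ (as $\bar P$ has degree $<r$ and $\bar Z$ is irreducible), the $p$-valuation of $P(x)$ equals the minimum valuation of the coefficients of $P$. Using $||y||=p^{-r v(y)}$ for the normalised absolute value on $K$ (so that $||\cdot||^r$ here means the $\Q_p$-norm raised appropriately, matching Caruso's normalisation), we get
\[
\int ||P(x)||^r\,dP=\sum_{k\ge0}\Pr(v=k)\,p^{-rk}=(1-p^{-(m+1)})\sum_k p^{-(m+1)k}p^{-rk}=\frac{1-p^{-(m+1)}}{1-p^{-(m+1)-r}},
\]
which equals $\frac{p^{n+1}-p^{r}}{p^{n+1}-1}$. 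This gives exactly the stated main term, and shows it does not depend on the class beyond the generator condition.

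\emph{Error terms: non-generator classes and the $p\mathcal O_K$ piece.} On a non-generator residue class, $\bar x$ lies in a subfield $\F_{p^d}$ with $d<r$. There we will use \Cref{lem: estimate of orbital integral}-type control. More simply, we aim to show that the total contribution of these classes, together with the inverted piece over $p\mathcal O_K$, lies in $[0,4p^{-r}]$. The lower bound $-p^{-r}$ is then harmless, since all omitted contributions are nonnegative. That leaves slack for the measure-zero discrepancy, and alternatively one can use Caruso's two-sided estimate \cite[Corollary 4.6]{caruso2022zeroes} directly. For the upper bound we plan to import Caruso's estimate \cite[Corollary 4.6]{caruso2022zeroes}, which already bounds the error by $4p^{-f}$ with $f=r$, and to match its main term with ours for $n$ in this range. \textbf{This matching is the main obstacle.} Caruso's corollary is stated for $n\ge 2r-1$, so for $n<2r-1$ we would need to rerun his argument, bounding the index-weighted integral over non-generator classes by the count $p^r-G_r$ times the maximal contribution, each at most $p^{-r}$ up to a constant.

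\emph{Stabilisation for $n\ge 2r-1$.} We will show that $\rho^{(n),\poly}_K(x)$ is independent of $n$ once $n\ge 2r-1$. For $x\in\mathcal O_K^{\new}$, the distribution of $P(x)$ for Haar $P\in\Omega_{n-r}$ is Haar on the image lattice $\Z_p[x]_{\le n-r}$. Once $n-r\ge r-1$, this image is all of $\Z_p[x]$, which has rank $r$, and adding further independent coefficients times $x^j\in\Z_p[x]$ leaves the Haar measure on $\Z_p[x]$ invariant. So the integral stabilises. The inverted piece is handled in the same way, and integrating gives the stated equality with $n=2r-1$.
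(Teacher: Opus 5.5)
Your evaluation on generator classes, the lower bound, and the stabilization argument are correct. They give a more self-contained route than the paper's, which simply quotes Caruso's Theorem 4.7 for the two-sided estimate on $r\le n\le 2r-1$ and Caruso's Theorem B for stabilization. On a class with $\F_p[\bar x]=\F_{p^r}$ and $m=n-r\le r-1$ you do get exactly $\rho^{(n),\poly}_K(x)=\frac{p^{n+1}-p^r}{p^{n+1}-1}$. What you actually use, correctly, is $\|P(x)\|^r=|N_{K/\Q_p}(P(x))|_p=p^{-rk}$; the normalisation $\|y\|=p^{-rv(y)}$ you state would instead give $p^{-r^2k}$. So the generator classes produce precisely the main term. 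The remaining pieces are integrals of a nonnegative density, so the lower bound holds even without the $-p^{-r}$. Likewise, once $n-r\ge r-1$ the law of $P(x)$ is Haar on $\Z_p[x]$, which gives stabilization, and the region outside $\mathcal O_K$ follows by the inversion rule.

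The gap is the upper bound for $r\le n<2r-1$, which you leave open, and your proposed repair has the wrong order of magnitude. There are $p^r-G_r$ non-generator classes, at least of order $p^{r/2}$ when $r$ is even. A per-class bound of order $p^{-r}$ therefore yields only $O(p^{-r/2})$: for $r=2$ it gives $p\cdot p^{-2}=p^{-1}$ rather than $4p^{-2}$. Caruso's Corollary 4.6 does not cover $n<2r-1$. The matrix-side \Cref{lem: estimate of orbital integral} decays in $d$ rather than in $r-d$, so it would only give about $\tau(r)p^{-r}$.

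The missing idea is that the index factor makes the density small on non-generator classes. If $\F_p[\bar x]=\F_{p^d}$ with $d<r$, then $\Z_p[x]\subseteq\{y\in\mathcal O_K:\bar y\in\F_{p^d}\}$, a sublattice of index $p^{r-d}$. Since $\int\|P(x)\|^r\,dP\le 1$, this gives $\rho^{(n),\poly}_K(x)\le p^{d-r}$. There are $G_d\le p^d$ such classes, each of measure $p^{-r}$, so together they contribute at most $\sum_{d\mid r,\,d<r}p^{2d-2r}\le p^{-r}/(1-p^{-2})$. The piece over $p\mathcal O_K$ is at most $p^{-r}$, because $\rho^{(n),\poly}_K\le 1$ on $\mathcal O_K^{\new}$. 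The total error is thus at most $\tfrac73 p^{-r}<4p^{-r}$.

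Alternatively, your Haar-on-lattice observation also gives pointwise monotonicity $\rho^{(n),\poly}_K\le\rho^{(n+1),\poly}_K$. For a lattice $L$ and $w\in\mathcal O_K$, the set $\{y\in L:v(y+w)\ge k\}$ is empty or a coset of $L\cap p^k\mathcal O_K$, so adding an independent $a x^{m+1}$ only makes the valuation stochastically smaller. The remainder is an integral of $\rho^{(n),\poly}_K$ over a region independent of $n$, so it is bounded by its value at $n=2r-1$. There Corollary 4.6 applies, since $\frac{p^{2r}-p^r}{p^{2r}-1}=\frac{p^r}{p^r+1}$.
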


\begin{proof}
The notation transfer is the same as in the proof of \Cref{prop: bound for not unramified_poly}. We first assume that $r\le n\le 2r-1$. Since $\Q_p[\zeta_{p^r-1}]/\Q_p$ is unramified, we have $||\Disc_{\Q_p[\zeta_{p^r-1}]/\Q_p}||=1$. By \cite[Theorem 4.7]{caruso2022zeroes}, specialized to the present notation, we have
$$
-p^{-r}\le\E[Z^{\poly}_{\Q_p[\zeta_{p^r-1}]^{\new},n}]-\frac{p^{n+1}-p^r}{p^{n+1}-1}\cdot \frac{G_r}{p^r}\le4p^{-r},
$$
Based on the expression of $G_r$ in \Cref{lem: order of generator}, this becomes
$$
\frac{p^{n+1}-p^r}{p^{n+1}-1}\cdot\sum_{d|r}\mu\left(\frac{r}{d}\right)p^{d-r}-p^{-r}\le
\E[Z^{\poly}_{\Q_p[\zeta_{p^r-1}]^{\new},n}]\le
\frac{p^{n+1}-p^r}{p^{n+1}-1}\cdot\sum_{d|r}\mu\left(\frac{r}{d}\right)p^{d-r}+4p^{-r}.
$$
This proves the desired two-sided estimate in the range $r\le n\le 2r-1$.

It remains to prove the stabilization statement for $n\ge 2r-1$. By Caruso's monotonicity theorem, the polynomial correlation function is independent of $n$ once $n\ge 2r-1$ \cite[Theorem B]{caruso2022zeroes}. Equivalently, for every $x\in \Q_p[\zeta_{p^r-1}]^{\new}$ and every $n\ge 2r-1$, we have
$$
\rho^{(n),\poly}_{\Q_p[\zeta_{p^r-1}]}(x)=\rho^{(2r-1),\poly}_{\Q_p[\zeta_{p^r-1}]}(x).
$$
Integrating over $\Q_p[\zeta_{p^r-1}]$ and using \Cref{prop: prelimiting correlation function_poly}, we obtain
$$
\E[Z^{\poly}_{\Q_p[\zeta_{p^r-1}]^{\new},n}]=\int_{\Q_p[\zeta_{p^r-1}]} \rho^{(n),\poly}_{\Q_p[\zeta_{p^r-1}]}(x)\,dx=\int_{\Q_p[\zeta_{p^r-1}]} \rho^{(2r-1),\poly}_{\Q_p[\zeta_{p^r-1}]}(x)\,dx=\E[Z^{\poly}_{\Q_p[\zeta_{p^r-1}]^{\new},2r-1}].
$$
This completes the proof.
\end{proof}

\begin{proof}[Proof of \Cref{thm: poly zeros degree under rn}]
Since a Haar-random polynomial in $\Omega_n(\Z_p)$ has degree $n$ almost surely, it has exactly $n$ roots in $\overline{\Q}_p$, counted with multiplicity. Moreover, the sets $K^{\new}$ decompose the roots according to the finite extension of $\Q_p$ which they generate. Hence
$$
\sum_{[K:\Q_p]\le r_n}\E[Z_{K^{\new},n}^{\poly}]+\sum_{r_n<[K:\Q_p]\le n}\E[Z_{K^{\new},n}^{\poly}]=n.
$$
Therefore
$$
\sum_{[K:\Q_p]\le r_n}\E[Z_{K^{\new},n}^{\poly}]-r_n=(n-r_n)-
\sum_{r_n<[K:\Q_p]\le n}\E[Z_{K^{\new},n}^{\poly}].
$$
It remains to analyze the contribution from extensions of degree larger than $r_n$.

We first show that the total contribution of non-unramified extensions of degree larger than $r_n$ tends to zero. Let $K/\Q_p$ have degree $r=ef$, where $e$ is the ramification index and $f$ is the inertia degree. If $K/\Q_p$ is not unramified, then $e\ge 2$. By \Cref{prop: bound for not unramified_poly}, we have
$$
\E[Z_{K^{\new},n}^{\poly}]\le(1+4p^{-f})||\Disc_{K/\Q_p}||\le5||\Disc_{K/\Q_p}||.
$$
By \Cref{lem: unramified subextension}, every extension $K/\Q_p$ with inertia degree $f$ and ramification index $e$ is a totally ramified extension of $\Q_p[\zeta_{p^f-1}]$ of degree $e$. Hence, by \Cref{lem: serre mass},
$$
\sum_{\substack{[K:\Q_p]=ef\\
\text{inertia degree } f\\
\text{ramification index } e}}
||\Disc_{K/\Q_p}||=\frac{e}{p^{(e-1)f}}.
$$
Therefore the total contribution of non-unramified extensions of degree larger than $r_n$ is at most
$$
5\sum_{\substack{ef>r_n\\ e\ge 2}}\frac{e}{p^{(e-1)f}}\le 5\sum_{\substack{ef>r_n\\ e\ge 2}} e p^{-ef/2}\le 5\sum_{m>r_n}m^2p^{-m/2}.
$$
Since $r_n\to\infty$, the right-hand side tends to zero. Hence
$$
\lim_{n\to\infty}\sum_{\substack{r_n<[K:\Q_p]\le n\\ K/\Q_p\text{ not unramified}}}\E[Z_{K^{\new},n}^{\poly}]=0.
$$

It remains to analyze the unramified extensions of degrees between $r_n+1$ and $n$. For each $1\le r\le n$, write
$$
K_r:=\Q_p[\zeta_{p^r-1}],
$$
the unique unramified extension of $\Q_p$ of degree $r$. From the preceding decomposition, we have
$$
\sum_{[K:\Q_p]\le r_n}\E[Z_{K^{\new},n}^{\poly}]-r_n=\sum_{r=r_n+1}^{n}\left(1-\E[Z_{K_r^{\new},n}^{\poly}]\right)+o(1).
$$
We now evaluate the last sum. Fix $M\ge 1$. Since $n-r_n\to\infty$, for all sufficiently large $n$ we can split
$$
\sum_{r=r_n+1}^{n}\left(1-\E[Z_{K_r^{\new},n}^{\poly}]\right)=\sum_{r=r_n+1}^{n-M}\left(1-\E[Z_{K_r^{\new},n}^{\poly}]\right)+\sum_{r=n-M+1}^{n}\left(1-\E[Z_{K_r^{\new},n}^{\poly}]\right).
$$

We first bound the range $r_n<r\le n-M$. We claim that for all $1\le r\le n$,
$$
\left|1-\E[Z_{K_r^{\new},n}^{\poly}]\right|
\le C_p\left(p^{-r/2}+p^{-(n-r+1)}\right),
$$
where $C_p$ is a constant depending only on $p$. Indeed, first suppose $r\le n\le 2r-1$. By \Cref{prop: bounds for unramified extension_poly},
$$
-p^{-r}\le\E[Z_{K_r^{\new},n}^{\poly}]-\frac{p^{n+1}-p^r}{p^{n+1}-1}\cdot \frac{G_r}{p^r}\le4p^{-r}.
$$
Moreover, we have
$$
1-\frac{G_r}{p^r}=1-\sum_{d|r}\mu\left(\frac{r}{d}\right)p^d\le p^{-r}\sum_{\substack{\ell\mid r\\ \ell\ \mathrm{prime}}}p^{r/\ell}\le C_p p^{-r/2}.
$$
Also,
$$
1-\frac{p^{n+1}-p^r}{p^{n+1}-1}=\frac{p^r-1}{p^{n+1}-1}\le C_p p^{-(n-r+1)}.
$$
Combining these estimates gives
$$
\left|1-\E[Z_{K_r^{\new},n}^{\poly}]\right|\le
C_p\left(p^{-r/2}+p^{-(n-r+1)}\right)
$$
in the range $r\le n\le 2r-1$.

If $n\ge 2r-1$, then \Cref{prop: bounds for unramified extension_poly} gives the
stabilization
$$
\E[Z_{K_r^{\new},n}^{\poly}]=\E[Z_{K_r^{\new},2r-1}^{\poly}].
$$
Applying the preceding estimate with $n=2r-1$, we get
$$
\left|1-\E[Z_{K_r^{\new},n}^{\poly}]
\right|\le C_p p^{-r/2}.
$$
This is also bounded by
$$
C_p\left(p^{-r/2}+p^{-(n-r+1)}\right).
$$
This proves the claimed uniform estimate. Therefore
$$
\sum_{r=r_n+1}^{n-M}\left|1-\E[Z_{K_r^{\new},n}^{\poly}]\right|\le
C_p\sum_{r=r_n+1}^{\infty}p^{-r/2}+C_p\sum_{r=r_n+1}^{n-M}p^{-(n-r+1)}.
$$
The first sum is $O_p(p^{-r_n/2})$, and the second is $O_p(p^{-M})$. Hence
$$
\limsup_{n\to\infty}\left|\sum_{r=r_n+1}^{n-M}\left(1-\E[Z_{K_r^{\new},n}^{\poly}]\right)\right|\le C_p' p^{-M}.
$$
It remains to analyze the last $M$ unramified extensions. Write $r=n-m+1$, where $1\le m\le M$. Then, for fixed $m$ and all sufficiently large $n$, we have $r\le n\le 2r-1$. By \Cref{prop: bounds for unramified extension_poly},
$$
-p^{-r}\le\E[Z_{K_r^{\new},n}^{\poly}]-\frac{p^{n+1}-p^r}{p^{n+1}-1}\cdot\frac{G_r}{p^r}\le 4p^{-r}.
$$
As $n\to\infty$, we have $r=n-m+1\to\infty$, and hence $\frac{G_r}{p^r}\to 1$. Moreover,
$$
\frac{p^{n+1}-p^r}{p^{n+1}-1}=
\frac{p^{n+1}-p^{n-m+1}}{p^{n+1}-1}\to1-p^{-m}.
$$
Therefore, for every fixed $1\le m\le M$,
$$
\lim_{n\to\infty}\E[Z_{K_{n-m+1}^{\new},n}^{\poly}]=1-p^{-m}.
$$
Consequently,
$$
\lim_{n\to\infty}\sum_{r=n-M+1}^{n}\left(1-\E[Z_{K_r^{\new},n}^{\poly}]\right)=\sum_{m=1}^{M}p^{-m}.
$$

Combining the two ranges, we obtain
$$
\limsup_{n\to\infty}\left|\sum_{r=r_n+1}^{n}\left(1-\E[Z_{K_r^{\new},n}^{\poly}]\right)-\sum_{m=1}^{M}p^{-m}\right|\le
C_p' p^{-M}.
$$
Letting $M\to\infty$, and using $\sum_{m=1}^{\infty}p^{-m}=\frac{1}{p-1}$,
we conclude that
$$
\lim_{n\to\infty}\sum_{r=r_n+1}^{n}\left(1-\E[Z_{K_r^{\new},n}^{\poly}]\right)=\frac{1}{p-1}.
$$
Together with the fact that the non-unramified contribution is $o(1)$, this gives
$$
\lim_{n\to\infty}\left(\sum_{[K:\Q_p]\le r_n}\E[Z_{K^{\new},n}^{\poly}]-r_n\right)=\frac{1}{p-1}.
$$
This proves the theorem.
\end{proof}

\begin{prop}\label{prop: unramified complement_poly}
The sequence $n-\E[Z_{\Q_p^{\un},n}^{\poly}]$ converges to the limit
$$\lim_{n\rightarrow\infty}\left(n-\E[Z_{\Q_p^{\un},n}^{\poly}]\right)=\sum_{m=1}^\infty \left(1-\E[Z_{\Q_p[\zeta_{p^m-1}]^{\new},2m-1}^{\poly}]\right)+\frac{1}{p-1}.$$
\end{prop}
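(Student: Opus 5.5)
The plan follows the proof of \Cref{prop: unramified complement}, with the inputs from \Cref{prop: bounds for unramified extension_poly} replacing those of \Cref{prop: large extension}. A Haar-random polynomial in $\Omega_n(\Z_p)$ has degree $n$ almost surely. Every root lying in $\Q_p^{\un}$ generates exactly one unramified extension $K_r=\Q_p[\zeta_{p^r-1}]$ with $r\le n$. Writing $n=\sum_{r=1}^n 1$, this gives
$$
n-\E[Z^{\poly}_{\Q_p^{\un},n}]=\sum_{r=1}^{n}\left(1-\E[Z^{\poly}_{K_r^{\new},n}]\right).
$$
I fix $M\ge1$ and, for $n>2M$, split this sum into three ranges: the fixed-degree range $1\le r\le M$, the middle range $M<r\le n-M$, and the large-degree range $n-M<r\le n$.

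\textbf{Fixed-degree range.} By the stabilization statement in \Cref{prop: bounds for unramified extension_poly}, once $n\ge 2r-1$ we have $\E[Z^{\poly}_{K_r^{\new},n}]=\E[Z^{\poly}_{K_r^{\new},2r-1}]$. In particular, for $n\ge 2M-1$ the fixed-degree part equals $\sum_{r=1}^M\bigl(1-\E[Z^{\poly}_{K_r^{\new},2r-1}]\bigr)$ exactly, so no limit needs to be taken there.

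\textbf{Middle range.} I use the uniform estimate established in the proof of \Cref{thm: poly zeros degree under rn}:
$$
\bigl|1-\E[Z^{\poly}_{K_r^{\new},n}]\bigr|\le C_p\bigl(p^{-r/2}+p^{-(n-r+1)}\bigr)\quad\text{for all } 1\le r\le n.
$$
Summing over $M<r\le n-M$ bounds the middle range by $C_p'p^{-M/2}$, uniformly in $n$.

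\textbf{Large-degree range.} Here I reuse the limit computed in that same proof: for each fixed $m$, $\E[Z^{\poly}_{K_{n-m+1}^{\new},n}]\to 1-p^{-m}$. Hence the large-degree part converges to $\sum_{m=1}^M p^{-m}$.

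\textbf{Combining and letting $M\to\infty$.} The three ranges together give
$$
\limsup_{n\to\infty}\Bigl|\bigl(n-\E[Z^{\poly}_{\Q_p^{\un},n}]\bigr)-\sum_{r=1}^M\bigl(1-\E[Z^{\poly}_{K_r^{\new},2r-1}]\bigr)-\sum_{m=1}^M p^{-m}\Bigr|\le C_p'p^{-M/2}.
$$
Letting $M\to\infty$ requires the series $\sum_m\bigl(1-\E[Z^{\poly}_{K_m^{\new},2m-1}]\bigr)$ to converge. It converges absolutely, since the uniform estimate with $n=2m-1$ bounds its terms by $C_p(p^{-m/2}+p^{-m})$. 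Together with $\sum_m p^{-m}=\frac{1}{p-1}$, this gives the stated limit.

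\textbf{Expected obstacle.} The only delicate point is bookkeeping in the fixed-degree range: the condition $n\ge 2r-1$ must hold for every $r\le M$. This is guaranteed by taking $n>2M$. Everything else is a direct reuse of estimates already proved.
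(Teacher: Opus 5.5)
Your proposal is correct and follows the paper's proof step for step: the same three-range split for $n>2M$, exact stabilization in the fixed-degree range, the uniform bound $C_p\bigl(p^{-r/2}+p^{-(n-r+1)}\bigr)$ on the middle range, the limit $1-p^{-m}$ on the large-degree range, and absolute convergence of the series via that uniform bound at $n=2m-1$. The only difference is cosmetic: you cite the uniform estimate and the large-degree limit from the proof of \Cref{thm: poly zeros degree under rn}, while the paper re-derives them here from \Cref{prop: bounds for unramified extension_poly}.
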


\begin{proof}
For each $1\le r\le n$, recall that $\Q_p[\zeta_{p^r-1}]$ is the unique unramified extension of $\Q_p$ of degree $r$. Since a polynomial of degree $n$ has exactly $n$ roots over $\overline{\Q}_p$, counted with multiplicity, we have
$$
\E[Z^{\poly}_{\Q_p^{\un},n}]=\sum_{r=1}^n
\E[Z^{\poly}_{\Q_p[\zeta_{p^r-1}]^{\new},n}].
$$
Therefore
$$
n-\E[Z^{\poly}_{\Q_p^{\un},n}]=\sum_{r=1}^n
\left(1-\E[Z^{\poly}_{\Q_p[\zeta_{p^r-1}]^{\new},n}]\right).
$$

Fix $M\ge 1$. When $n>2M$, we split the sum into three ranges:
\begin{align}
\begin{split}
\sum_{r=1}^n\left(1-\E[Z^{\poly}_{\Q_p[\zeta_{p^r-1}]^{\new},n}]\right)&=\sum_{r=1}^M\left(
1-\E[Z^{\poly}_{\Q_p[\zeta_{p^r-1}]^{\new},n}]\right)\\
&+\sum_{r=M+1}^{n-M}
\left(1-\E[Z^{\poly}_{\Q_p[\zeta_{p^r-1}]^{\new},n}]\right)\\
&+\sum_{r=n-M+1}^{n}\left(
1-\E[Z^{\poly}_{\Q_p[\zeta_{p^r-1}]^{\new},n}]\right).
\end{split}
\end{align}
We call these the fixed degree part, the middle range, and the large degree part.

We first consider the fixed degree part. For every fixed $1\le r\le M$, \Cref{prop: bounds for unramified extension_poly}
implies that the polynomial correlation function, and hence also the integral over
$\Q_p[\zeta_{p^r-1}]^{\new}$, is independent of $n$ once $n\ge 2r-1$. Thus, for all sufficiently
large $n$,
$$
\E[Z^{\poly}_{\Q_p[\zeta_{p^r-1}]^{\new},n}]=\E[Z^{\poly}_{\Q_p[\zeta_{p^r-1}]^{\new},2r-1}].
$$
Therefore
$$
\lim_{n\to\infty}\sum_{r=1}^M\left(1-\E[Z^{\poly}_{\Q_p[\zeta_{p^r-1}]^{\new},n}]\right)=\sum_{r=1}^M\left(
1-\E[Z^{\poly}_{\Q_p[\zeta_{p^r-1}]^{\new},2r-1}]\right).
$$

We next show that the middle range gives no contribution after sending $M\to\infty$.
We claim that, for all $1\le r\le n$,
$$
\left|1-\E[Z^{\poly}_{\Q_p[\zeta_{p^r-1}]^{\new},n}]\right|\le C_p\left(p^{-r/2}+p^{-(n-r+1)}\right),
$$
where $C_p$ is a constant depending only on $p$.

Indeed, first suppose $r\le n\le 2r-1$. By \Cref{prop: bounds for unramified extension_poly},
$$
-p^{-r}\le\E[Z^{\poly}_{\Q_p[\zeta_{p^r-1}]^{\new},n}]-\frac{p^{n+1}-p^r}{p^{n+1}-1}\cdot \frac{G_r}{p^r}\le4p^{-r}.
$$
Following \Cref{lem: order of generator}, we have
$$
1-\frac{G_r}{p^r}=1-\sum_{d|r}\mu\left(\frac{r}{d}\right)p^d\le p^{-r}\sum_{\substack{\ell\mid r\\ \ell\ \mathrm{prime}}}p^{r/\ell}\le C_p p^{-r/2}.
$$
Also,
$$
1-\frac{p^{n+1}-p^r}{p^{n+1}-1}=\frac{p^r-1}{p^{n+1}-1}\le C_p p^{-(n-r+1)}.
$$
Combining these estimates gives
$$
\left|1-\E[Z^{\poly}_{\Q_p[\zeta_{p^r-1}]^{\new},n}]\right|\le C_p\left(p^{-r/2}+p^{-(n-r+1)}\right)
$$
in the range $r\le n\le 2r-1$.

If $n\ge 2r-1$, then \Cref{prop: bounds for unramified extension_poly} gives
$$
\E[Z^{\poly}_{\Q_p[\zeta_{p^r-1}]^{\new},n}]=\E[Z^{\poly}_{\Q_p[\zeta_{p^r-1}]^{\new},2r-1}].
$$
Applying the preceding estimate with $n=2r-1$ gives
$$
\left|1-\E[Z^{\poly}_{\Q_p[\zeta_{p^r-1}]^{\new},n}]\right|\le C_p p^{-r/2}.
$$
This is also bounded by
$$
C_p\left(p^{-r/2}+p^{-(n-r+1)}\right).
$$
This proves the claimed uniform estimate. Therefore
$$
\sum_{r=M+1}^{n-M}\left|1-\E[Z^{\poly}_{\Q_p[\zeta_{p^r-1}]^{\new},n}]\right|\le C_p\sum_{r=M+1}^{\infty}p^{-r/2}+C_p\sum_{r=M+1}^{n-M}p^{-(n-r+1)}.
$$
The right-hand side is bounded by $C_p' p^{-M/2}$. Hence
$$
\lim_{M\to\infty}\limsup_{n\to\infty}
\left|\sum_{r=M+1}^{n-M}\left(
1-\E[Z^{\poly}_{\Q_p[\zeta_{p^r-1}]^{\new},n}]\right)\right|=0.
$$

It remains to analyze the large degree part. Write $r=n-m+1$, where $1\le m\le M$. Then, for fixed $m$ and all sufficiently large $n$, we have $r\le n\le 2r-1$. By \Cref{prop: bounds for unramified extension_poly},
$$
-p^{-r}\le\E[Z^{\poly}_{\Q_p[\zeta_{p^r-1}]^{\new},n}]-\frac{p^{n+1}-p^r}{p^{n+1}-1}\cdot \frac{G_r}{p^r}\le 4p^{-r}.
$$
As $n\to\infty$, we have $r=n-m+1\to\infty$, and hence $
\frac{G_r}{p^r}\to 1$. Moreover,
$$
\frac{p^{n+1}-p^r}{p^{n+1}-1}=\frac{p^{n+1}-p^{n-m+1}}{p^{n+1}-1}\to 1-p^{-m}.
$$
Therefore, for every fixed $1\le m\le M$,
$$
\lim_{n\to\infty}
\E[Z^{\poly}_{\Q_p[\zeta_{p^{n-m+1}-1}]^{\new},n}]=1-p^{-m}.
$$
Consequently,
$$
\lim_{n\to\infty}\sum_{r=n-M+1}^{n}\left(
1-\E[Z^{\poly}_{\Q_p[\zeta_{p^r-1}]^{\new},n}]\right)=\sum_{m=1}^M p^{-m}.
$$

Combining the three ranges, we obtain that for each fixed $M$,
$$
\limsup_{n\to\infty}\left|\left(n-\E[Z^{\poly}_{\Q_p^{\un},n}]\right)-\sum_{r=1}^M\left(1-\E[Z^{\poly}_{\Q_p[\zeta_{p^r-1}]^{\new},2r-1}]\right)-\sum_{m=1}^M p^{-m}\right|\le C_p' p^{-M/2}.
$$
By the same uniform estimate with $n=2r-1$, we have
$$
\left|1-\E[Z^{\poly}_{\Q_p[\zeta_{p^r-1}]^{\new},2r-1}]\right|\le C_p p^{-r/2}.
$$
Hence the series
$$
\sum_{r=1}^\infty\left(1-\E[Z^{\poly}_{\Q_p[\zeta_{p^r-1}]^{\new},2r-1}]\right)
$$
converges absolutely. Since $\sum_{m=1}^{\infty}p^{-m}=\frac{1}{p-1}$, sending $M\to\infty$ gives
$$
\lim_{n\to\infty}\left(n-\E[Z^{\poly}_{\Q_p^{\un},n}]\right)=\sum_{m=1}^\infty\left(1-\E[Z^{\poly}_{\Q_p[\zeta_{p^m-1}]^{\new},2m-1}]\right)+\frac{1}{p-1}.
$$
This proves the proposition.
\end{proof}

\begin{proof}[Proof of \Cref{thm: poly outside un}]
Since a Haar-random polynomial in $\Omega_n(\Z_p)$ has degree $n$ almost surely, it has exactly $n$ roots in $\overline{\Q}_p$, counted with multiplicity. Therefore
$$
Z_{\overline{\Q}_p\setminus \Q_p^{\un},n}^{\poly}=n-Z_{\Q_p^{\un},n}^{\poly}
$$
almost surely. Hence \Cref{prop: unramified complement_poly} gives the existence of the limit and the identity
$$
\lim_{n\to\infty}\E[Z_{\overline{\Q}_p\setminus \Q_p^{\un},n}^{\poly}]= \sum_{m=1}^\infty\left(1-\E[Z_{\Q_p[\zeta_{p^m-1}]^{\new},2m-1}^{\poly}]\right)+\frac{1}{p-1}.
$$
We first prove the upper bound for this limit.

For each $m\ge 1$, apply \Cref{prop: bounds for unramified extension_poly} with $r=m$ and $n=2m-1$. Since $\frac{p^{2m}-p^m}{p^{2m}-1}=\frac{p^m}{p^m+1}$, \Cref{prop: bounds for unramified extension_poly} gives
$$
\E[Z_{\Q_p[\zeta_{p^m-1}]^{\new},2m-1}^{\poly}]\ge\frac{p^m}{p^m+1}\sum_{d|m}\mu\left(\frac{m}{d}\right)p^{d-m}-p^{-m}.
$$
Therefore
$$
1-\E[Z_{\Q_p[\zeta_{p^m-1}]^{\new},2m-1}^{\poly}]\le 1-\frac{p^m}{p^m+1}\sum_{d|m}\mu\left(\frac{m}{d}\right)p^{d-m}+p^{-m}.
$$
We decompose the right-hand side as
$$
1-\frac{p^m}{p^m+1}\sum_{d|m}\mu\left(\frac{m}{d}\right)p^{d-m}+p^{-m}=\left(1-\frac{p^m}{p^m+1}\right)+\frac{p^m}{p^m+1}\left(1-\sum_{d|m}\mu\left(\frac{m}{d}\right)p^{d-m}\right)+p^{-m}.
$$
Since $1-\frac{p^m}{p^m+1}=\frac{1}{p^m+1}\le p^{-m}$ and
$$
1-\sum_{d|m}\mu\left(\frac{m}{d}\right)p^{d-m}=-\sum_{\substack{d|m\\ d<m}}\mu\left(\frac{m}{d}\right)p^{d-m}
\le\sum_{\substack{d|m\\ d<m}}p^{d-m},
$$
we get
$$
1-\E[Z_{\Q_p[\zeta_{p^m-1}]^{\new},2m-1}^{\poly}]\le2p^{-m}+\sum_{\substack{d|m\\ d<m}}p^{d-m}.
$$
Summing over $m\ge 1$, we obtain
$$
\sum_{m=1}^\infty\left(1-\E[Z_{\Q_p[\zeta_{p^m-1}]^{\new},2m-1}^{\poly}]\right)\le2\sum_{m=1}^\infty p^{-m}+\sum_{m=1}^\infty\sum_{\substack{d|m\\ d<m}}p^{d-m}.
$$
The first sum is $2\sum_{m=1}^\infty p^{-m}=\frac{2}{p-1}$. For the second sum, write $m=kd$ with $k\ge 2$. Then
$$
\sum_{m=1}^\infty\sum_{\substack{d|m\\ d<m}}p^{d-m}\le\sum_{d=1}^\infty\sum_{k=2}^\infty p^{d-kd}=\sum_{d=1}^\infty \frac{p^{-d}}{1-p^{-d}}=\sum_{d=1}^\infty \frac{1}{p^d-1}.
$$
Using $\frac{1}{p^d-1}\le \frac{p^{1-d}}{p-1}$, we get
$$
\sum_{d=1}^\infty \frac{1}{p^d-1}\le\frac{1}{p-1}\sum_{d=1}^\infty p^{1-d}
=\frac{p}{(p-1)^2}.
$$
Consequently,
$$
\sum_{m=1}^\infty\left(1-\E[Z_{\Q_p[\zeta_{p^m-1}]^{\new},2m-1}^{\poly}]\right)\le\frac{2}{p-1}+\frac{p}{(p-1)^2}.
$$
Substituting this into the expression from \Cref{prop: unramified complement_poly} gives
$$
\lim_{n\to\infty}\E[Z_{\overline{\Q}_p\setminus \Q_p^{\un},n}^{\poly}]\le\frac{1}{p-1}
+\frac{2}{p-1}+\frac{p}{(p-1)^2}=\frac{4p-3}{(p-1)^2}.
$$

It remains to show that the limit is positive. Let $K/\Q_p$ be any ramified quadratic extension. Then $K$ is not contained in $\Q_p^{\un}$, and every element of $K^{\new}$ generates the ramified extension $K$ over $\Q_p$.  Therefore, for every $n$,
$$
\E[Z_{\overline{\Q}_p\setminus\Q_p^{\un},n}^{\poly}]\ge\E[Z_{K^{\new},n}^{\poly}].
$$
By the totally ramified quadratic case of \cite[Theorem C]{caruso2022zeroes}, for $n\ge 3$ we have
$$
\E[Z_{K^{\new},n}^{\poly}]=||\Disc_{K/\Q_p}||\cdot\frac{p^2(p^2+1)}{p^4+p^3+p^2+p+1}.
$$
This quantity is strictly positive. Consequently,
$$
\lim_{n\to\infty}\E[Z_{\overline{\Q}_p\setminus \Q_p^{\un},n}^{\poly}]\ge||\Disc_{K/\Q_p}||
\cdot\frac{p^2(p^2+1)}{p^4+p^3+p^2+p+1}>0.
$$
This proves the positivity of the limit and completes the proof.
\end{proof}

\bibliographystyle{plain}
\bibliography{references.bib}

\end{document}